\title{Idempotent Monads and Comonads Applied to Compactifications and Unitizations}
\author{Jeri Ann Spiker}
\address{School of Mathematical and Statistical Sciences
\\Arizona State University
\\Tempe, Arizona 85287}
\email{jaspiker@asu.edu}
\subjclass[2000]{46L05}
\keywords{monad, comonad, adjoint functors, category equivalence, reflective and coreflective subcategories, compactifications, unitizations, locally compact Hausdorff spaces, $C^*$-algebras}
\newcommand{\mc}[1]{\mathcal{#1}}
\newcommand{\tild}[1]{\widetilde{#1}} 
\newcommand{\ov}[1]{\overline{#1}}
\newcommand{\vphi}{\varphi}
\newcommand{\id}{\mathrm{id}} 
\newcommand{\inc}{\mathrm{Inc}} 
\newcommand{\C}{\mc{C}} 
\newcommand{\D}{\mc{D}}
\newcommand{\M}{\mc{M}} 
\newcommand{\N}{\mc{N}}  
\newcommand{\T}{\mc{T}} 
\newcommand{\F}{\operatorname{F}}
\newcommand{\G}{\operatorname{G}}
\newcommand{\spec}{\operatorname{Spec}} 
\newcommand{\functc}{\operatorname{C}}
\newcommand{\functn}{\operatorname{N}}
\newcommand{\neta}{\operatorname{N}^\eta}
\newcommand{\functm}{\operatorname{M}} 
\newcommand{\mpsi}{\operatorname{M}^\psi}
\newcommand{\ev}{\operatorname{ev}}
\newcommand{\functt}{\operatorname{T}}
\newcommand{\deltat}{\functt^\delta}
\newcommand{\functs}{\operatorname{S}}
\newcommand{\epsilons}{\functs^\epsilon}
\newcommand{\com}{\mathbf{Cpt}}
\newcommand{\ucom}{\mathbf{U_c}} 
\newcommand{\U}{\mathbf{U}}
\newtheorem{thm}{Theorem}[section] 
\numberwithin{equation}{section}
\newtheorem{cor}[thm]{Corollary}
\newtheorem{lemma}[thm]{Lemma}
\newtheorem{prop}[thm]{Proposition}
\theoremstyle{definition}
\newtheorem{definition}[thm]{Definition}
\newtheorem{notation}[thm]{Notation}
\theoremstyle{remark}
\newtheorem{remark}[thm]{Remark}
\title[An Application of Idempotent Monads and Comonads]{An Application of Idempotent Monads and Comonads to Compactifications and Unitizations}
\begin{document}

\baselineskip=15pt

\begin{abstract}
    This paper uses monads and comonads to establish a certain type of equivalence between two subcategories, one reflective and one coreflective, in a category whose objects represent compactifications of non-compact locally compact Hausdorff spaces. The equivalence is then examined in the dual category of unitizations of non-unital commutative $C^*$-algebras and subsequently generalized to the noncommutative case. 
\end{abstract}

\maketitle

\section{Introduction}

In this paper, we aim to use the techniques presented in~\cite{BKQ25} for applying idempotent monads and comonads to obtain a pair of subcategories, one reflective and one coreflective, that are equivalent in a particular way. 

Abstractly, we have a category $\C$, a reflective subcategory $\N$ with reflector $\neta: \C \to \N$, and a coreflective subcategory $\M$ with coreflector $\mpsi: \C \to \M$. When certain hypotheses are met, the adjunction $\neta \circ \inc_\M \dashv \mpsi \circ \inc_\N$ is an adjoint equivalence and we have the following naturally isomorphic functors $\neta \cong (\neta \circ \inc_\M) \circ \mpsi$ and $\mpsi \cong (\neta \circ \inc_\M) \circ \mpsi$. In~\cite{BKQ11}, the authors present an example of this equivalence in the context of coactions of groups on $C^*$-algebras, referring to pairs of equivalent subcategories satisfying this relationship as a pair satisfying the \textit{maximal-normal equivalence}. 

In~\cite{BKQ25}, the authors show that the existence of an idempotent monad and an idempotent comonad on $\C$ give rise to reflective and coreflective subcategories and they provide an efficient characterization of when the pair satisfy the maximal-normal equivalence. They present an example of this equivalence in the context of Fell bundles over \'etale groupoids. 

In this paper, we present an example of a pair of subcategories satisfying the maximal-normal equivalence in the context of compactifications of non-compact locally compact Hausdorff spaces and unitizations of non-unital $C^*$-algebras. In~\cite{JT22}, the authors view compactifications as a functorial process that takes objects in the category of locally compact Hausdorff spaces to objects in the category of compact Hausdorff spaces. In our paper, we instead view compactifications as single objects that are triples in one unifying category we call the category of compactifications of locally compact Hausdorff spaces. We do this for unitizations as well, viewing unitizations as triples in a unifying category called the category of unitizations of $C^*$-algebras. 

In Section~\ref{preliminaries}, we recall the necessary preliminaries presented in~\cite{BKQ25} regarding idempotent monads and comonads, their relationship with reflective and coreflective subcategories, and detail the hypothesis required for a pair of subcategories to satisfy the maximal-normal equivalence. We also present a technical lemma showing the maximal-normal equivalence is preserved under contravariant equivalence. The contravariance implies that a monad in one category induces a comonad on the other category.  This observation is relevant to our example since the category of compactifications of locally compact Hausdorff spaces and the category of unitizations of commutative $C^*$-algebras are contravariantly equivalent when equipped with the appropriate morphisms. 

In Section~\ref{compactifications}, we introduce the category of compactifications of locally compact Hausdorff spaces and denote it by $\com$. In this category, the usual one-point compactifications appear as final objects in certain fibers over this category. We use the one-point compactifications to construct an idempotent monad on $\com$ and the corresponding reflective subcategory contains all objects isomorphic to a one-point compactification. Dually, the usual Stone--\v{C}ech compactifications appear as initial objects in these same fibers and we use them to construct an idempotent comonad on $\com$. The corresponding coreflective subcategory contains all objects isomorphic to a Stone--\v{C}ech compactification. Further, we prove that these two subcategories satisfy the maximal-normal equivalence. 

Then, in Section~\ref{ucom}, we introduce the category of unitizations of commutative $C^*$-algebras and denote it by $\ucom$. We show that this category is contravariantly equivalent to $\com$ using Gelfand duality. Due to this equivalence and the aforementioned technical lemma, we obtain a maximal-normal equivalent pair in $\ucom$ automatically. The contravariance of the equivalence means that the idempotent monad on $\com$ induces an idempotent comonad on $\ucom$ and we show that the corresponding coreflective subcategory contains objects isomorphic to minimal unitizations. Dually, the idempotent comonad on $\com$ induces an idempotent monad on $\ucom$ and the corresponding reflective subcategory contains objects isomorphic to maximal unitizations. 

Finally, in Section~\ref{unitizations}, we construct idempotent monads and comonads on a more general category of unitizations without the commutativity assumption. We do this using maximal and minimal unitizations and show that the corresponding reflective and coreflective subcategories satisfy the maximal-normal equivalence. 

\section{Preliminaries}\label{preliminaries}

\cite{BKQ11} and \cite{BKQ25} give an overview of the general category theory we require for this paper. We summarize those results here. 
 
Let $\C$ be a category. Let $1_\C$ denote the identity functor on $\C$. If $\D$ is a subcategory of $\C$, then $\inc_\D: \D \to \C$ denotes the inclusion functor.

\begin{definition}
     Let $\functn: \C \to \C$ be a functor and $\eta: 1_\C \to \functn$ a natural transformation. If the natural transformations $\eta \functn: \functn \to \functn^2$ and $\functn\eta: \functn \to \functn^2$ are natural isomorphisms, we call the pair $(\functn, \eta)$ an \textit{idempotent monad on $\C$}.
\end{definition}

Suppose $(\functn, \eta)$ is an idempotent monad on $\C$. Let $\N$ be the full subcategory of $\C$ whose objects $x$ satisfy that $\eta_x: x \to \functn x$ is an isomorphism. Let $\neta: \C \to \N$ denote the functor obtained by regarding $\functn$ as a functor from $\C$ to $\N$ so that $\functn = \inc_\N \circ \neta$. 

\begin{lemma} \label{monad}
    Let $(\functn, \eta)$, $\N$, and $\neta$ be as given above. Then $\inc_\N: \N \to \C$ is left adjointable and $\neta$ is a left adjoint of $\inc_\N$. Thus, $\N$ is a reflective subcategory of $\C$. Moreover, $\eta$ is the unit of the adjunction $\neta \dashv \inc_\N$. 

\end{lemma}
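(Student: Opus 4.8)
The plan is to verify directly that the data $(\functn, \eta)$ packaged as an idempotent monad produces an adjunction $\neta \dashv \inc_\N$ via the universal property of $\eta$. First I would establish the key preliminary fact that $\functn x \in \N$ for every $x \in \C$, i.e.\ that $\functn$ really does land in $\N$ and hence factors as $\inc_\N \circ \neta$. This is where the idempotency hypothesis does its work: we must show $\eta_{\functn x}\colon \functn x \to \functn^2 x$ is an isomorphism. By definition $\eta\functn$ is a natural isomorphism, and $(\eta\functn)_x = \eta_{\functn x}$, so this is immediate; it is worth remarking (and I would remark) that idempotency also forces $\eta_{\functn x} = (\functn\eta)_x = \functn(\eta_x)$, a coherence that will be used implicitly when checking triangle identities.

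Next I would construct the counit-free verification of the adjunction using the universal arrow formulation. Given $x \in \C$ and $n \in \N$ with a morphism $f\colon x \to \inc_\N n$, I want a unique $\bar f\colon \neta x \to n$ in $\N$ with $\inc_\N(\bar f)\circ \eta_x = f$. Existence: set $\bar f := \eta_n^{-1} \circ \functn f$ (using that $\eta_n$ is an isomorphism since $n \in \N$), and check the triangle $\inc_\N(\bar f)\circ\eta_x = \eta_n^{-1}\circ \functn f \circ \eta_x = \eta_n^{-1}\circ \eta_n \circ f = f$ by naturality of $\eta$ applied to $f$. Uniqueness: if $g\colon \neta x \to n$ also satisfies $g\circ \eta_x = f$, then naturality of $\eta$ at $g$ plus $\eta_{\neta x} = \functn(\eta_x)$ being invertible pins $g$ down to $\bar f$. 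This universal-arrow data, ranging over all $x$, is exactly the statement that $\inc_\N$ has a left adjoint, that $\neta$ (defined on morphisms the only way compatible with this) is such a left adjoint, and that $\eta$ is the unit; that $\N$ is then reflective is just the definition of reflective subcategory together with $\N$ being full.

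I would then briefly indicate why $\neta$ agrees on morphisms with the formula forced by adjointness — namely $\neta(h) = \functn h$ regarded in $\N$ — so that the functor called $\neta$ in the statement is genuinely the left adjoint just produced, not merely an abstract one; this is a one-line naturality check. The main obstacle, such as it is, is purely bookkeeping: keeping straight the three a priori different maps $\eta_{\functn x}$, $\functn(\eta_x)$, $(\eta\functn)_x$ and the fact that idempotency identifies all of them, and being careful that "$\neta x \in \N$" is not circular. There is no deep content beyond the universal property; alternatively one could cite the standard theorem that the Eilenberg--Moore category of an idempotent monad is a reflective subcategory and identify it with $\N$, but the direct argument above is shorter in this setting.
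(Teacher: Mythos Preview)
Your proposal is correct and follows essentially the same route as the paper: both establish the adjunction via the universal-arrow formulation, producing the factorization $\bar f = \eta_n^{-1}\circ \functn f$ for $n\in\N$ and then invoking the standard passage from universal arrows to a reflective subcategory (the paper cites \cite{BKQ11} for this last step). Your write-up is more detailed---you explicitly verify $\functn x\in\N$ and spell out existence and uniqueness, whereas the paper simply exhibits the completing arrow in a diagram and defers the rest to the reference---but the argument is the same.
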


\begin{proof}
    Let $x$ be an object in $\C$, $y$ an object in $ \N$, and $f:x \to y$ is a morphism in $\C$. Then $\eta_y$ is an isomorphism and so $\eta_y^{-1} \circ \functn f:\functn x \to y$ is the unique morphism completing the following diagram: \[\begin{tikzcd}
        x \arrow[swap]{d}{\eta_x} \arrow{dr}{f} \\
        \functn x \arrow[dashed]{r} \arrow[swap]{dr}{\functn f}& y\arrow{d}{\eta_y} \\
        & \functn y.
    \end{tikzcd}\]
    This means for for all $x$ in $\C$, $(\neta x, \eta_x)$ is a universal morphism from $x$ to $\N$. 

    It's well known that this choice of a universal morphism for each object gives rise to a reflective subcateogry. See \cite{BKQ11}, Section 2. 
\end{proof} 
Now, the dual case. 

\begin{definition}
    Let $\functm: \C \to \C$ be a functor and $\psi: \functm \to 1_\C$ a natural transformation. If the natural transformations $\psi \functm: \functm^2 \to \functm$ and $\functm\psi: \functm^2 \to \functm$ are natural isomorphisms, we call the pair $(\functm, \psi)$ an \textit{idempotent comonad on $\C$}. 
\end{definition}

Let $(\functm, \psi)$ be an idempotent comonad on $\C$. Let $\M$ be the full subcategory of $\C$ whose objects $x$ satisfy that $\psi_x$ is an isomoprhism. Let $\mpsi :\C \to \M$ denote the functor obtained by regarding $\functm$ as a functor from $\C$ to $\M$ so that $\functm = \inc_\M \circ \mpsi $. 

\begin{lemma}\label{comonad}
    Suppose $(\functm, \psi)$, $\M$, and $\mpsi $ are as given above. Then $\inc_\M: \M \to \C$ is right adjointable and $\mpsi $ is a right adjoint of $\inc_\M$. Thus, $\M$ is a coreflective subcategory of $\C$. Moreover, $\psi$ is the unit of the adjunction $\inc_\M \dashv \mpsi $. 
\end{lemma}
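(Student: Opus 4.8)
The statement is formally dual to Lemma~\ref{monad}, so the plan is to mirror that proof, taking care to reverse every arrow and to compose on the correct sides. Concretely, I would show that for every object $x$ of $\C$ the pair $(\mpsi x,\psi_x)$ is a \emph{couniversal} morphism from $\inc_\M$ to $x$: for every object $y$ of $\M$ and every morphism $g\colon y\to x$ of $\C$ there is a unique morphism $\tilde g\colon y\to\mpsi x$ in $\M$ with $\psi_x\circ\tilde g=g$. Granting this for all $x$, the correspondence between couniversal morphisms and right adjoints --- the dual of the fact cited from \cite{BKQ11}, Section~2 --- immediately yields that $\inc_\M$ has a right adjoint, that this right adjoint may be taken to be $\mpsi$, that $\M$ is coreflective in $\C$, and that $\psi$ (the transformation $\functm=\inc_\M\circ\mpsi\to 1_\C$) is the counit of $\inc_\M\dashv\mpsi$.

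Two preliminary remarks set things up. Since $\psi\functm\colon\functm^2\to\functm$ is a natural isomorphism, each component $\psi_{\functm x}$ is invertible, so $\functm x$ lies in $\M$ and $\mpsi\colon\C\to\M$ is a well-defined functor; and since $\M$ is full in $\C$, it suffices to construct the required morphisms in $\C$. Now fix $g\colon y\to x$ with $y$ in $\M$. By definition of $\M$, $\psi_y\colon\functm y\to y$ is an isomorphism, so I would set $\tilde g:=\functm g\circ\psi_y^{-1}\colon y\to\functm x=\mpsi x$. Naturality of $\psi$ at $g$ gives $\psi_x\circ\functm g=g\circ\psi_y$, hence $\psi_x\circ\tilde g=g$; also $\tilde g\circ\psi_y=\functm g$, so $\tilde g$ makes both triangles of
\[\begin{tikzcd}
\functm y \arrow{r}{\functm g} \arrow[swap]{d}{\psi_y} & \functm x \arrow{d}{\psi_x} \\
y \arrow[dashed]{ur}{\tilde g} \arrow[swap]{r}{g} & x
\end{tikzcd}\]
commute (the outer square commutes by naturality of $\psi$).

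For uniqueness, suppose $h\colon y\to\functm x$ also satisfies $\psi_x\circ h=g$. Applying $\functm$ to the equality $\psi_x\circ h=\psi_x\circ\tilde g$ yields $\functm\psi_x\circ\functm h=\functm\psi_x\circ\functm\tilde g$; since $\functm\psi\colon\functm^2\to\functm$ is a natural isomorphism, $\functm\psi_x$ is invertible, and cancelling it on the left gives $\functm h=\functm\tilde g$. Naturality of $\psi$ at $h$ and at $\tilde g$ then gives $h\circ\psi_y=\psi_{\functm x}\circ\functm h=\psi_{\functm x}\circ\functm\tilde g=\tilde g\circ\psi_y$, and since $\psi_y$ is invertible we cancel it on the right to conclude $h=\tilde g$. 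This establishes the couniversal property, and the lemma follows. I expect no serious obstacle here; the one point requiring attention is purely bookkeeping --- interchanging the roles of $\eta$ and $\psi$ from Lemma~\ref{monad} forces the cancellations onto the opposite sides (the isomorphism $\functm\psi_x$ cancelled on the left, and $\psi_y$ on the right), which is the exact dual of cancelling $\functn\eta_x$ and $\eta_y$ there, and one should not mix these up.
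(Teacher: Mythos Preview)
Your proposal is correct and follows essentially the same approach as the paper: both dualize Lemma~\ref{monad} by exhibiting $(\mpsi x,\psi_x)$ as a universal morphism from $\M$ to $x$ via the formula $\functm g\circ\psi_y^{-1}$, then invoke \cite{BKQ11}, Section~2. Your version is more thorough --- you spell out the uniqueness argument using the invertibility of $\functm\psi_x$ and $\psi_y$, whereas the paper simply asserts uniqueness --- and you correctly identify $\psi$ as the \emph{counit} of $\inc_\M\dashv\mpsi$ (the paper's statement calls it the unit, which is a typo).
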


\begin{proof}
    Dual to Lemma~\ref{monad}, we have for all objects $y$ in $\C$, $(\functm y, \psi_y)$ is a universal morphism from $\M$ to $y$. In particular, if $x$ is an object in $\M$ and $f:x \to y$ then $\functm f \circ \psi_x^{-1}:x \to \functm y$ is the unique morphism completing the following diagram:

    \[\begin{tikzcd}
         \functm x \arrow[swap]{d}{\psi_x} \arrow{dr}{\functm f} \\
        x \arrow[dashed]{r} \arrow[swap]{dr}{f}& \functm y \arrow{d}{\psi_y} \\
        & y.
    \end{tikzcd}\]

    This gives rise to a coreflective subcategory. See~\cite{BKQ11}, Section 2. 
\end{proof}

\begin{lemma}\label{maxnor} 
    Suppose $(\functn , \eta)$, $\N$, $\neta$, $(\functm , \psi)$, $\M$, and $\mpsi $ are as given above. Suppose the natural transformations $\functn \psi: \functn \functm  \to \functn $ and $\functm \eta: \functm  \to \functm \functn $ are natural isomorphisms. Then we have the following results. 
    \begin{enumerate}
        \item[(i)] The adjunction $\neta \circ \inc_\M \dashv \mpsi  \circ \inc_\N$ is an adjoint equivalence between $\M$ and $\N$. 
        \item[(ii)] $\neta \cong (\neta \circ \inc_\M) \circ \mpsi $.
        \item[(iii)] $\mpsi  \cong (\mpsi  \circ \inc_\N) \circ \neta$.
    \end{enumerate}
\end{lemma}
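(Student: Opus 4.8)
The plan is to carry out all the verifications inside $\C$ and then transport the results back to $\N$ and $\M$ along the inclusions. The structural facts I will use repeatedly are: $\inc_\N$ and $\inc_\M$ are fully faithful (being inclusions of full subcategories), hence they reflect isomorphisms, and therefore also reflect isomorphisms of natural transformations componentwise; the identities $\functn = \inc_\N \circ \neta$ and $\functm = \inc_\M \circ \mpsi$; and the standard whiskering relations, in particular $\inc_\N(\neta\psi) = (\inc_\N\neta)\psi = \functn\psi$ and $\inc_\M(\mpsi\eta) = (\inc_\M\mpsi)\eta = \functm\eta$. I would prove (ii) and (iii) first, as they are immediate, and then (i).

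For (ii): since $\functm = \inc_\M \circ \mpsi$, the functor $(\neta\circ\inc_\M)\circ\mpsi$ is just $\neta\circ\functm$. I claim the whiskered transformation $\neta\psi\colon \neta\functm \to \neta$ is the desired isomorphism. Indeed $\inc_\N(\neta\psi) = \functn\psi$, which is a natural isomorphism by hypothesis; since $\inc_\N$ reflects isomorphisms, $\neta\psi$ is a natural isomorphism, giving $\neta \cong \neta\functm = (\neta\circ\inc_\M)\circ\mpsi$. Part (iii) is dual: $(\mpsi\circ\inc_\N)\circ\neta = \mpsi\circ\functn$, and $\mpsi\eta\colon \mpsi \to \mpsi\functn$ has $\inc_\M(\mpsi\eta) = \functm\eta$, a natural isomorphism by hypothesis, so $\mpsi\eta$ is a natural isomorphism and $\mpsi \cong (\mpsi\circ\inc_\N)\circ\neta$.

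For (i): the adjunction $\neta\circ\inc_\M \dashv \mpsi\circ\inc_\N$ is available for free, as the composite of the adjunctions $\inc_\M \dashv \mpsi$ from Lemma~\ref{comonad} and $\neta \dashv \inc_\N$ from Lemma~\ref{monad}. It therefore suffices to show that $\neta\circ\inc_\M$ is an equivalence of categories, since any adjunction whose left adjoint is an equivalence automatically has invertible unit and counit, i.e.\ is an adjoint equivalence. I will exhibit $\mpsi\circ\inc_\N$ as a quasi-inverse. Whiskering $\inc_\N$ onto $(\neta\circ\inc_\M)\circ(\mpsi\circ\inc_\N)$ produces $\functn\functm\inc_\N$, and on it the whiskered transformations $(\functn\psi)\inc_\N\colon \functn\functm\inc_\N \to \functn\inc_\N$ and $\eta\inc_\N\colon \inc_\N \to \functn\inc_\N$ are both natural isomorphisms --- the first because $\functn\psi$ is one by hypothesis, the second because $\eta_y$ is an isomorphism for every object $y$ of $\N$. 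Composing, $\functn\functm\inc_\N \cong \inc_\N$, and since $\inc_\N$ is fully faithful this lifts to $(\neta\circ\inc_\M)\circ(\mpsi\circ\inc_\N) \cong 1_\N$. Dually, whiskering $\inc_\M$ onto $(\mpsi\circ\inc_\N)\circ(\neta\circ\inc_\M)$ gives $\functm\functn\inc_\M$, on which $(\functm\eta)\inc_\M\colon \functm\inc_\M \to \functm\functn\inc_\M$ and $\psi\inc_\M\colon \functm\inc_\M \to \inc_\M$ are natural isomorphisms (the first by hypothesis, the second because $\psi_x$ is an isomorphism for every object $x$ of $\M$), yielding $(\mpsi\circ\inc_\N)\circ(\neta\circ\inc_\M) \cong 1_\M$. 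Hence $\neta\circ\inc_\M$ is an equivalence with quasi-inverse $\mpsi\circ\inc_\N$, and (i) follows.

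None of this is difficult; the only things to watch are the bookkeeping of which functor is whiskered on which side --- it is exactly what forces the hypotheses on $\functn\psi$ and $\functm\eta$ to be the ones stated --- and the small point that a fully faithful functor reflects isomorphisms of natural transformations, which is what lets an isomorphism built in $\C$ descend to $\N$ or $\M$. If one wished to avoid even citing the ``left adjoint an equivalence $\Rightarrow$ adjoint equivalence'' fact, one could instead compute the unit and counit of the composite adjunction explicitly and identify them, via the triangle identities, with the whiskered composites above; but invoking the standard fact is cleaner.
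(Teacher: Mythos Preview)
Your proof is correct. The paper itself does not prove this lemma at all --- it simply cites \cite{BKQ25}, Section 3.3 --- so you are supplying a full argument where the paper defers to an external reference. Your route is the natural one: get (ii) and (iii) immediately by whiskering $\psi$ and $\eta$ with $\neta$ and $\mpsi$ and reflecting the hypothesised isomorphisms along the fully faithful inclusions, then obtain (i) by composing the two given adjunctions and checking the two round-trips are isomorphic to identities. The invocation of ``left adjoint an equivalence $\Rightarrow$ adjoint equivalence'' is standard and saves you from tracking the composite unit and counit explicitly; nothing is lost there. The only mildly delicate step is the claim that $\inc_\N F \cong \inc_\N G$ implies $F \cong G$ for $F,G\colon \N \to \N$, but you have justified it correctly: since $\N$ is full in $\C$, each component of the isomorphism already lives in $\N$, and fullness plus faithfulness of $\inc_\N$ transports both naturality and invertibility back.
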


\begin{proof}
    See \cite{BKQ25} Section 3.3. 
\end{proof}

We refer to any pair of reflective and coreflective subcategories satisfying the hypothesis in Lemma~\ref{maxnor} as a pair satisfying the \textit{maximal-normal} equivalence. This title was introduced in \cite{BKQ11} in the context of maximal and normal coactions of groups on $C^*$-algebras. 

Idempotent monads and idempotent comonads are preserved under category equivalence. In the case of a contravariant equivalence, which is relevant to this paper, we have the following technical lemma.

\begin{lemma}\label{induced}
    Suppose $\C$ is equivalent to $\D$ via contravariant functors $\F:\C \to \D$ and $\G: \D \to \C$. Let $\theta: 1_\D \to \F\G$ be a natural isomorphism given by the equivalence. Suppose $(\functn , \eta)$ is an idempotent monad on $\C$ and $(\functm , \psi)$ is an idempotent comonad on $\C$. 

    Let $\functt= \F\functn \G: \D \to \D$ and $\delta:\functt \to 1_\D$ be the natural transformation given by the assignment $\delta_x = \theta_x^{-1} \circ \F\eta_{\G x}$ for an object $x$ in $\D$.   

    Let $\functs = \F\functm \G: \D \to \D$ and $\epsilon:1_\D \to \functs$ be the natural transformation given by the assignment $\epsilon_x = \F\psi_{\G x} \circ \theta_x$. 

    Then, $(\functt, \delta)$ is an idempotent comonad on $\D$ and $(\functs, \epsilon)$ is an idempotent monad on $\D$. 

    Furthermore, if $\functn \psi$ is a natural isomorphism, then $\functt \epsilon$ is a natural isomorphism. If $\functm \eta$ is a natural isomorphism, then $\functs \delta$ is a natural isomorphism. 
\end{lemma}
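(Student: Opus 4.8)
The plan is to check directly from the definitions that $(\functt,\delta)$ is an idempotent comonad, that $(\functs,\epsilon)$ is an idempotent monad, and then to transport the two ``cross'' isomorphisms. The guiding principle throughout is that a contravariant functor reverses composites and reverses the direction of natural transformations, so a monad $(\functn,\eta)$ with unit $\eta:1_\C\to\functn$ is carried by $\F$ to something with a map pointing \emph{into} the identity, hence a comonad structure; dually for $(\functm,\psi)$. The natural isomorphism $\theta:1_\D\to\F\G$ (and the companion $\zeta:1_\C\to\G\F$ from the equivalence) will be used repeatedly to ``absorb'' the $\G\F$ or $\F\G$ that appear when composing.

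First I would record the bookkeeping for contravariant functors: if $\alpha:S\to T$ is a natural transformation of functors $\C\to\C$, then $\F\alpha$ has components $\F\alpha_x:\F T x\to \F S x$, and $\F$ takes horizontal composites $\beta*\alpha$ to $\F\beta * \F\alpha$ in the appropriate reversed order. Using this, I compute $\functt^2=\F\functn\G\F\functn\G$ and insert $\theta$ to compare $\G\F$ with the identity on the middle; the two maps $\delta\functt,\functt\delta:\functt^2\to\functt$ unwind, after a diagram chase using naturality of $\theta$ and the equivalence coherence, to $\F$ applied to the maps $\functn\eta,\eta\functn:\functn\to\functn^2$ (conjugated by isomorphisms). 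Since $(\functn,\eta)$ is an idempotent monad, $\functn\eta$ and $\eta\functn$ are natural isomorphisms, and applying the functor $\F$ (which preserves isomorphisms) together with conjugation by $\theta$ shows $\delta\functt$ and $\functt\delta$ are natural isomorphisms. Hence $(\functt,\delta)$ is an idempotent comonad. The argument for $(\functs,\epsilon)$ is the exact mirror image: $\psi\functm$ and $\functm\psi$ being isomorphisms forces $\epsilons\functs$ and $\functs\epsilon$ to be, using $\epsilon_x=\F\psi_{\G x}\circ\theta_x$.

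For the last two assertions: assuming $\functn\psi:\functn\functm\to\functn$ is a natural isomorphism, I must show $\functt\epsilon:\functt\to\functt\functs$ is one. Writing $\functt\functs=\F\functn\G\F\functm\G$ and $\functt=\F\functn\G$, the component of $\functt\epsilon$ at $x$ is $\F\functn\G$ applied to $\epsilon$, i.e.\ it involves $\F\functn\G(\F\psi_{\G x}\circ\theta_{\G\F(\cdots)})$; pushing the $\G$ through the contravariant $\F\psi$ and collapsing $\G\F$ via the equivalence isomorphism turns this into $\F$ of (an isomorphism-conjugate of) $\functn\psi$, which is an isomorphism by hypothesis, so $\functt\epsilon$ is a natural isomorphism. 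Symmetrically, $\functm\eta$ being an isomorphism yields that $\functs\delta$ is, via $\delta_x=\theta_x^{-1}\circ\F\eta_{\G x}$.

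The main obstacle is purely organizational rather than conceptual: keeping the variance straight and correctly identifying which whiskering ($\eta\functn$ versus $\functn\eta$, $\psi\functm$ versus $\functm\psi$, and likewise on the $\D$ side) corresponds to which after applying the contravariant $\F$ — the order of horizontal composition reverses, so $\delta\functt$ should correspond to $\F(\functn\eta)$ and $\functt\delta$ to $\F(\eta\functn)$ (or vice versa, to be pinned down carefully), and a sign error here would swap the roles. The other delicate point is making sure every insertion of $\theta$ or its inverse is matched by the correct coherence/naturality square so that the conjugating maps really are isomorphisms and really do cancel; I would handle this with one carefully drawn commuting diagram per claim rather than symbol-pushing, and I would invoke only naturality of $\theta$, functoriality of $\F$ and $\G$, and the triangle identities of the (contravariant) equivalence.
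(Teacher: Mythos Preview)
Your proposal is correct and follows essentially the same approach as the paper: expand the components of $\functt\delta$, $\delta\functt$, $\functt\epsilon$, and $\functs\delta$ and observe that each is a composition of isomorphisms built from $\theta$, the equivalence data, and the hypotheses that $\functn\eta$, $\eta\functn$, $\functn\psi$, $\functm\eta$ are natural isomorphisms. The paper's proof is in fact terser than your sketch---it writes, for instance, $\functt\delta_x = \functt(\theta_x^{-1}) \circ \F(\functn\G\F(\eta_{\G x}))$ and simply asserts this is a composition of isomorphisms, leaving implicit exactly the conjugation-by-$\zeta$ argument you flag as the delicate bookkeeping step.
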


\begin{proof}
First, we show that $(\functt, \delta)$ is an idempotent comonad on $\D$. It is obvious that $\functt$ is a functor and that $\delta$ is a natural transformation. So, $\functt \delta$ is a natural transformation. To see that it is a natural isomorphism, let $x$ be an object in $\D$. Then,
\begin{align*}
    \functt \delta_x &= \functt (\theta_x^{-1} \circ \F \eta_{\G x}) \\
    & = \functt(\theta_x^{-1} )\circ \functt(\F\eta_{\G x}) \\
    &  = \functt(\theta_x^{-1}) \circ \F (\functn \G\F(\eta_{\G x})).
\end{align*}

This is a composition of isomorphisms, therefore $\functt \delta_x$ is an isomorphism. 

Similarly, to see that $\delta T$ is a natural isomorphism, observe  
\begin{align*}
    \delta Tx & = \delta_{\F \functn \G x} \\
    & = \theta_{\F \functn \G x}^{-1} \circ F(\eta_{\G \F \functn \G x}). 
\end{align*}
This is once again a composition of isomorphisms. We then have that $(\functt, \delta)$ is an idempotent comonad on $\D$. 

To show $(\functs, \epsilon)$ is a idempotent monad is similar. 

Now, suppose $\functn \psi$ is a natural isomorphism. Then for an object $x$ in $\D$, 
$$\functt \epsilon_x = \F \functn \G \F \psi_{\G x} \circ \functt \theta_x.$$
This is a composition of isomorphisms, therefore an isomorphism. 

To show $\functs \delta$ is an isomorphism is similar. 

\end{proof}

\section{Compactifications}\label{compactifications}

Suppose $X$ is a non-compact locally compact Hausdorff space. A \textit{compactification} of $X$ is a pair $(K,\vphi)$ where $K$ is a compact Hausdorff space and $\vphi:X \to K$ is an embedding such that $\ov{\vphi(X)} = K$. 

 From now on, unless otherwise specified, we use locally compact Hausdorff space to refer to a space that is locally compact, Hausdorff, but not compact. We do this is because compactifying an already compact space is rarely interesting and introduces unnecessary pathologies.

We now present the basic facts about the one-point compactification and Stone--\v{C}ech compactification of a locally compact Hausdorff space $X$. 

\begin{definition}\label{onestar}  A compactification $(X^*,\iota)$ of a locally compact Hausdorff space $X$ is called a \textit{one-point compactification of $X$} if it has the following property. If $(K, \vphi)$ is another compactification of $X$, then there exists a unique surjective continuous map $\iota_K: K \to X^*$ such that
    \[ \begin{tikzcd}
    & K \arrow[dashed]{d}{\iota_K} \\
    X  \arrow[r,"\iota"] \arrow[ru,"\vphi"] & X^* 
    \end{tikzcd} \]
    commutes.
\end{definition}

\begin{lemma}
    If $X$ is a locally compact Hausdorff space, then there exists a one-point compactification of $X$. $(X^*, \iota)$ is unique in that if $(K, \vphi)$ is another compactification with the property given in Definition~\ref{onestar} then $\iota_K:K \to X^*$ is a homeomorphism. 
\end{lemma}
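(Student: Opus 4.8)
The plan is to establish two things: existence of the one-point compactification, and the strong uniqueness statement that any compactification with the universal property of Definition~\ref{onestar} must be homeomorphic to $(X^*,\iota)$.

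For existence, I would use the classical Alexandroff construction. Set $X^* = X \sqcup \{\infty\}$ as a set, and topologize it by declaring the open sets to be the open subsets of $X$ together with all sets of the form $(X \setminus C) \cup \{\infty\}$ where $C \subseteq X$ is compact. One then checks the routine points: this is a topology; $X^*$ is compact (any open cover must contain a neighborhood of $\infty$, whose complement is compact and hence covered by finitely many members); $X^*$ is Hausdorff, using local compactness of $X$ to separate $\infty$ from any point of $X$ by a relatively compact open neighborhood; and the inclusion $\iota: X \hookrightarrow X^*$ is an embedding with dense image, density being where non-compactness of $X$ is used so that $\infty$ is not isolated. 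Then I would verify the universal property: given another compactification $(K,\vphi)$, define $\iota_K: K \to X^*$ by $\iota_K(\vphi(x)) = \iota(x)$ for $x \in X$ and $\iota_K(k) = \infty$ for $k \in K \setminus \vphi(X)$. The content is that this map is well-defined (since $\vphi$ is injective) and continuous: continuity at points of $\vphi(X)$ is clear since $\vphi$ is an embedding, and continuity at points of the "remainder" $K \setminus \vphi(X)$ reduces to showing that the preimage of $(X \setminus C) \cup \{\infty\}$ is open in $K$, i.e.\ that $\vphi(C)$ is closed in $K$ — which holds because $C$ compact implies $\vphi(C)$ compact hence closed in the Hausdorff space $K$. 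Surjectivity is immediate since $K$ is nonempty and $X$ is non-compact (so the remainder is nonempty, giving $\infty$ in the image). Uniqueness of $\iota_K$ making the triangle commute follows because its value on the dense subset $\vphi(X)$ is forced, and $X^*$ is Hausdorff.

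For the uniqueness clause, suppose $(K,\vphi)$ also satisfies the property of Definition~\ref{onestar}. Apply the property of $(X^*,\iota)$ to the compactification $(K,\vphi)$ to get a surjective continuous $\iota_K: K \to X^*$ with $\iota_K \circ \vphi = \iota$. Apply the property of $(K,\vphi)$ to the compactification $(X^*,\iota)$ to get a surjective continuous $\vphi_{X^*}: X^* \to K$ with $\vphi_{X^*} \circ \iota = \vphi$. Then $\iota_K \circ \vphi_{X^*}: X^* \to X^*$ is continuous and restricts to the identity on the dense subspace $\iota(X)$; since $X^*$ is Hausdorff, $\iota_K \circ \vphi_{X^*} = \id_{X^*}$. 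Symmetrically $\vphi_{X^*} \circ \iota_K = \id_K$. Hence $\iota_K$ is a homeomorphism. (Alternatively, one can phrase this as: the defining property makes $(X^*,\iota)$ a terminal object in the category of compactifications of $X$, and terminal objects are unique up to unique isomorphism.)

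The main obstacle is not conceptual but bookkeeping: carefully checking that the Alexandroff topology is Hausdorff (this is exactly where local compactness enters) and, in the universal property, verifying continuity of $\iota_K$ at the remainder points by the "$\vphi$ sends compacts to closed sets" argument. The uniqueness half is essentially formal once one observes that agreement on a dense set forces agreement everywhere into a Hausdorff space, so I would keep that part brief and spend the written detail on the construction and the continuity verification.
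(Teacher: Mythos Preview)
Your proposal is correct and supplies the standard Alexandroff construction together with the terminal-object uniqueness argument; the paper itself does not prove this lemma but simply cites Munkres, whose treatment is essentially what you have written out. One small point worth tightening: when you say continuity of $\iota_K$ at points of $\vphi(X)$ is ``clear since $\vphi$ is an embedding,'' you are tacitly using that $\vphi(X)$ is \emph{open} in $K$ (so that $\vphi(U)$ is open in $K$, not merely in $\vphi(X)$); this holds because a dense locally compact subspace of a Hausdorff space is always open, and it is another place where local compactness of $X$ enters.
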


\begin{proof}
    \cite{JM}, page 185. 
\end{proof}

$(X^*, \iota)$ has the following useful extension property. 

\begin{lemma} \label{fstar}
    Suppose $X$ and $Y$ are non-compact locally compact Hausdorff spaces. Denote their one-point compactifications by $(X^*, \iota)$ and $(Y^*,\kappa)$ respectively. If $f:X \to Y$ is a continuous proper map, then there exists a unique continuous map $f^*:X^* \to Y^*$ such that
 \[ \begin{tikzcd}
        X \arrow[r,"\iota"] \arrow[d,"f"] & X^* \arrow[dashed]{d}{f^*} \\
        Y \arrow[r,"\kappa"] & Y^* 
    \end{tikzcd} \]
    commutes.
\end{lemma}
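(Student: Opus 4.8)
The plan is to construct $f^*$ explicitly on the standard model of the one-point compactification and then verify the three required properties in turn: commutativity of the square, continuity, and uniqueness. By the preceding lemma we may assume (up to homeomorphism) that $X^* = \iota(X) \cup \{\infty_X\}$ with $\iota$ the inclusion, where a neighborhood basis at $\infty_X$ is given by the sets $\{\infty_X\} \cup \iota(X \setminus C)$ for $C \subseteq X$ compact, and similarly $Y^* = \kappa(Y) \cup \{\infty_Y\}$. Define $f^*\colon X^* \to Y^*$ by $f^*(\iota(x)) = \kappa(f(x))$ for $x \in X$ and $f^*(\infty_X) = \infty_Y$. Commutativity of the square is then immediate, since $f^* \circ \iota = \kappa \circ f$ holds on all of $X$ by construction.

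The substantive step is continuity of $f^*$, and this is exactly where properness of $f$ is used. At points of $\iota(X)$ continuity is clear, since $\iota$ and $\kappa$ are embeddings and $f$ is continuous: if $V \subseteq \kappa(Y)$ is open, then $(f^*)^{-1}(V) = \iota(f^{-1}(\kappa^{-1}(V)))$ is open in $\iota(X)$, hence open in $X^*$. So it suffices to check continuity at $\infty_X$. Given a basic open neighborhood $V = \{\infty_Y\} \cup \kappa(Y \setminus C)$ of $\infty_Y$ with $C \subseteq Y$ compact, one computes $(f^*)^{-1}(V) = \{\infty_X\} \cup \iota(X \setminus f^{-1}(C))$. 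Since $f$ is proper, $f^{-1}(C)$ is compact in $X$, so $(f^*)^{-1}(V)$ is a basic open neighborhood of $\infty_X$, and $f^*$ is continuous.

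For uniqueness, note that $\iota(X)$ is dense in $X^*$ because $(X^*,\iota)$ is a compactification, while $Y^*$ is Hausdorff; hence any continuous map $X^* \to Y^*$ is determined by its restriction to $\iota(X)$. Commutativity of the square forces that restriction to equal $\kappa \circ f \circ \iota^{-1}$, so $f^*$ is the unique such map. The main obstacle — really the only point requiring care — is correctly isolating the role of properness: without it, $f^{-1}(C)$ need not be compact, the preimage of a neighborhood of $\infty_Y$ need not be open, and no continuous extension exists in general, so properness is precisely the hypothesis that makes the construction go through. (One could also phrase this more abstractly by observing that a continuous map $g\colon X \to K$ into a compact Hausdorff space extends continuously to $X^*$ sending $\infty_X$ to $p \in K$ exactly when every neighborhood of $p$ has preimage with compact complement, and then applying this to $g = \kappa \circ f$ and $p = \infty_Y$; but the computation is the same.)
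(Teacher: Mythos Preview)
Your proof is correct. The paper itself does not give an argument for this lemma; it simply cites \cite{JM}, page 185 (Munkres' \emph{Topology}). Your explicit construction---defining $f^*$ by sending $\infty_X$ to $\infty_Y$, using properness of $f$ to verify continuity at $\infty_X$, and invoking density of $\iota(X)$ in the Hausdorff space $X^*$ for uniqueness---is exactly the standard argument one finds in that reference, so there is nothing substantive to compare.
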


\begin{proof} 
   \cite{JM}, page 185. 
\end{proof}

\begin{definition}\label{betax}
    A compactification $(\beta X, \rho)$ of a locally compact Hausdorff space $X$ is called a \textit{Stone--\v{C}ech compactification of $X$} if it has the following property. If $f: X \to K$ is a continuous map and $K$ is a compact Hausdorff space then there exists a unique continuous map $\tild{f}:\beta X \to K$ such that 
    \[ \begin{tikzcd}
    & \beta X \arrow[dashed]{d}{\tild{f}}\\
    X \arrow[ru,"\rho"] \arrow[r,"f"] & K 
    \end{tikzcd} \]
    commutes. 

    Further, if $(K, \vphi)$ is a compactification of $X$, then $\tild{\vphi}$ is surjective.
\end{definition}

\begin{lemma}
    If $X$ is a locally compact Hausdorff space, then there exists a Stone--\v{C}ech compactification of $X$. $(\beta X, \rho)$ is unique in that if $(K, \vphi)$ is another compactification with the property given in Definition~\ref{betax} then $\tild{\vphi}:\beta X \to K$ is a homeomorphism. 
\end{lemma}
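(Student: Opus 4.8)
The plan is to establish existence by the classical cube-embedding construction and uniqueness by a purely formal argument from the universal property; both are standard, so in the actual text one might simply cite a reference such as \cite{JM}, but here is how the argument goes.

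\textbf{Existence.} Since $X$ is locally compact Hausdorff it is Tychonoff (a subspace of the compact Hausdorff, hence completely regular, space $X^*$), so $C(X,[0,1])$ separates points from closed sets and the evaluation map $e\colon X \to [0,1]^{C(X,[0,1])}$, $e(x) = (f(x))_{f}$, is a topological embedding. Set $\beta X = \ov{e(X)}$ inside the compact Hausdorff cube $[0,1]^{C(X,[0,1])}$, and let $\rho\colon X \to \beta X$ be the corestriction of $e$. Then $\beta X$ is compact Hausdorff, $\rho$ is an embedding, and $\rho(X)$ is dense in $\beta X$ by construction, so $(\beta X,\rho)$ is a compactification of $X$.

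Next I would verify the universal property of Definition~\ref{betax}. For a continuous $f\colon X \to [0,1]$, the $f$-th coordinate projection restricts to $\tild f\colon \beta X \to [0,1]$ with $\tild f\circ\rho = f$. For a general continuous $f\colon X\to K$ with $K$ compact Hausdorff, embed $K$ in a cube $[0,1]^{C(K,[0,1])}$ via its evaluation map $j$; each coordinate of $j\circ f$ is a continuous map $X\to[0,1]$ and hence extends over $\beta X$, producing a continuous $F\colon\beta X\to[0,1]^{C(K,[0,1])}$ with $F\circ\rho = j\circ f$. Because $F(\rho(X))\subseteq j(K)$, $j(K)$ is closed, and $\rho(X)$ is dense, we get $F(\beta X)\subseteq j(K)$, and $\tild f = j^{-1}\circ F$ satisfies $\tild f\circ\rho = f$. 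Uniqueness of $\tild f$ is immediate: two continuous maps into the Hausdorff space $K$ agreeing on the dense set $\rho(X)$ coincide. Finally, if $(K,\vphi)$ is a compactification, then $\tild\vphi(\beta X)$ is compact, hence closed in $K$, and contains the dense set $\tild\vphi(\rho(X)) = \vphi(X)$, so $\tild\vphi(\beta X)=K$; thus $\tild\vphi$ is surjective. Hence $(\beta X,\rho)$ satisfies Definition~\ref{betax}.

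\textbf{Uniqueness.} Suppose $(K,\vphi)$ is another compactification with the property of Definition~\ref{betax}. The universal property of $(\beta X,\rho)$ applied to $\vphi$ yields a continuous $\tild\vphi\colon\beta X\to K$ with $\tild\vphi\circ\rho=\vphi$, and the property of $(K,\vphi)$ applied to $\rho\colon X\to\beta X$ (note $\beta X$ is compact Hausdorff) yields a continuous $g\colon K\to\beta X$ with $g\circ\vphi=\rho$. Then $g\circ\tild\vphi$ and $\id_{\beta X}$ are continuous self-maps of $\beta X$ agreeing on the dense subset $\rho(X)$, and $\beta X$ is Hausdorff, so $g\circ\tild\vphi=\id_{\beta X}$; symmetrically $\tild\vphi\circ g=\id_K$. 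Therefore $\tild\vphi$ is a homeomorphism with inverse $g$.

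\textbf{Main obstacle.} The formal uniqueness argument is routine, and the only step needing genuine care in existence is the extension of maps into an \emph{arbitrary} compact Hausdorff target: one must reduce to $[0,1]$-valued maps by embedding $K$ into a cube and then check that the extension still lands in $K$, which is exactly where density of $\rho(X)$ combined with closedness of $j(K)$ is used.
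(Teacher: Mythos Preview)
Your argument is correct: the cube-embedding construction and the density/universal-property uniqueness argument are exactly the standard ones. The paper itself does not give a proof at all but simply cites \cite{JM}, page 241, so your proposal is a fully written-out version of precisely the construction being referenced there; there is no substantive difference in approach, only in level of detail.
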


\begin{proof}
    \cite{JM}, page 241. 
\end{proof}

\begin{notation}\label{betaf}
    Suppose $X$ and $Y$ are locally compact Hausdorff spaces with Stone--\v{C}ech compactifications $(\beta X, \rho)$ and $(\beta Y, \sigma)$ respectively. Let $f:X \to Y$ be continuous. Then we use the usual notation $\beta f$ to denote the map $\tild{ \sigma \circ f}: \beta X \to \beta Y$.
\end{notation}

\begin{lemma}
    There is a category $\com$ whose objects are triples $(X, K, \vphi)$, where $X$ is a locally compact Hausdorff space and $(K,\vphi)$ is a compactification of $X$. A morphism is a pair $(f,F):(X,K,\vphi) \to (Y,L,\gamma)$ where $f:X \to Y$ is a continuous proper map and $F: K \to L$ is a continuous map such that
    \[ \begin{tikzcd}
    X \arrow{r}{\vphi} \arrow{d}{f} & K \arrow{d}{F} \\
    Y \arrow{r}{\gamma} & L 
\end{tikzcd} \]
commutes. 
\end{lemma}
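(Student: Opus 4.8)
The plan is to exhibit the composition law and the identities, and then to reduce the category axioms to those of $\mathbf{Top}$. Given morphisms $(f,F)\colon(X,K,\vphi)\to(Y,L,\gamma)$ and $(g,G)\colon(Y,L,\gamma)\to(Z,M,\delta)$, I would define their composite to be $(g,G)\circ(f,F):=(g\circ f,\ G\circ F)$. The first thing to verify is that this is again a morphism of the proposed category. The map $g\circ f$ is continuous as a composite of continuous maps, and it is proper: for compact $C\subseteq Z$ we have $(g\circ f)^{-1}(C)=f^{-1}\bigl(g^{-1}(C)\bigr)$, and $g^{-1}(C)$ is compact because $g$ is proper, hence $f^{-1}\bigl(g^{-1}(C)\bigr)$ is compact because $f$ is proper. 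Likewise $G\circ F$ is continuous. Finally the square required of $(g\circ f,\,G\circ F)$ is obtained by horizontally pasting the two commuting squares: $(G\circ F)\circ\vphi=G\circ(F\circ\vphi)=G\circ(\gamma\circ f)=(G\circ\gamma)\circ f=(\delta\circ g)\circ f=\delta\circ(g\circ f)$.

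For each object $(X,K,\vphi)$ I would take the identity morphism to be $(\id_X,\id_K)$; this is a legitimate morphism since identity maps are continuous and proper and the square $\vphi\circ\id_X=\id_K\circ\vphi$ commutes trivially. Associativity of composition and the left and right unit laws then hold coordinatewise, since composition in the proposed category is defined componentwise and both coordinates live in $\mathbf{Top}$, where these identities are already known. I would also record, in keeping with the standing convention on spaces, that all the spaces occurring are non-compact, so the target triples such as $(Z,M,\delta)$ are again of the prescribed form, and that the hom-classes form sets so the construction yields a genuine (locally small) category.

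I expect no real obstacle here: everything is formal except the single point that continuous proper maps are closed under composition, which is precisely the short preimage computation indicated above; the density condition $\ov{\vphi(X)}=K$ is part of the object data and plays no role in checking the category axioms.
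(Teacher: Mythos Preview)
Your proof is correct and follows essentially the same approach as the paper: both define identities as $(\id_X,\id_K)$, composition componentwise, and verify that the composite is again a morphism by noting that proper continuous maps compose and by pasting the two commuting squares. Your version is slightly more explicit (the preimage argument for properness, the remark on local smallness), but there is no substantive difference.
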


\begin{proof}
    The identity morphism for an object $(X,K, \vphi)$ is $(\id_X, \id_K)$ and composition of morphisms is defined componentwise. This composition rule works because composition of (proper) continuous maps are (proper) continuous and if the top and bottom squares of the following diagram on the left commutes, then the diagram on the right commutes:

     \[ \begin{array}{c@{\quad\text{}\quad}c}
    \begin{tikzcd}
        X \arrow[r,"\vphi"] \arrow[d, "f"]& K \arrow[d,"F"] \\
        Y \arrow[r,"\gamma"] \arrow[d,"g"] & L \arrow[d,"G"]\\
        Z \arrow[r,"\tau"] & M 
    \end{tikzcd}
    &
    \begin{tikzcd}
        X \arrow[r,"\vphi"] \arrow[d, "g \circ f"] & K \arrow[d,"G \circ F"] \\
        Z \arrow[r, "\tau"] & M.
    \end{tikzcd}
    \end{array} \]
\end{proof}

\begin{remark}
    We ask that $f:X \to Y$ is proper so that we may consider extensions $f^*:X^* \to Y^*$ as given in Lemma~\ref{fstar} and so that the category $\com$ is contravariantly equivalent to the category of unitizations of commutative $C^*$-algebras that we define later.
\end{remark}

For a fixed locally compact Hausdorff space $X$, let $\com_X$ denote the subcategory of $\com$ where objects have first coordinate $X$ and morphisms have first coordinate $\id_X$. We call $\com_X$ the \textit{fiber of $\com$ over $X$}. 

$(X,X^*,\iota)$ is a final object in $\com_X$. If $(X,K,\vphi)$ is an object in the fiber $\com_X$ then Definition~\ref{onestar} makes it clear that $(\id_X, \iota_K)$ is the unique morphism in the fiber $\com_X$ from $(X,K,\vphi)$ to $(X,X^*,\iota)$. We call $(X, X^*, \iota)$ a \textit{one-point compactification triple. }

\begin{prop}\label{minimizer}
    There is a functor $\functn: \com \to \com$ given on objects by setting $\functn (X,K,\vphi)$ to be a final object in $\com_X$. That is, 
    $$\functn (X,K,\vphi) = (X,X^*, \iota).$$
    $\functn $ is given on morphisms $(f,F):(X,K,\vphi) \to (Y,L,\gamma)$ by 
    $$\functn (f,F) = (f,f^*):(X,X^*,\iota) \to (Y,Y^*,\kappa).$$
\end{prop}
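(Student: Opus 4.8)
The plan is to fix, once and for all, a one-point compactification $(X^*,\iota)$ of each non-compact locally compact Hausdorff space $X$; since $(X^*,\iota)$ is in particular a compactification of $X$, each triple $(X,X^*,\iota)$ is an object of $\com$, and by the discussion preceding the proposition it is a final object of $\com_X$. Thus the object assignment $\functn(X,K,\vphi)=(X,X^*,\iota)$ is well-defined, its value depending only on the first coordinate $X$. On a morphism $(f,F)\colon(X,K,\vphi)\to(Y,L,\gamma)$, the map $f\colon X\to Y$ is proper by the definition of morphisms in $\com$, so Lemma~\ref{fstar} applies and yields a unique continuous map $f^*\colon X^*\to Y^*$ with $f^*\circ\iota=\kappa\circ f$, where $(Y^*,\kappa)$ is the fixed one-point compactification of $Y$. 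Set $\functn(f,F)=(f,f^*)$.

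The first thing I would verify is that $(f,f^*)$ is indeed a morphism $(X,X^*,\iota)\to(Y,Y^*,\kappa)$ in $\com$. This needs three things: that $f$ is a proper continuous map, which holds by hypothesis; that $f^*$ is continuous, which is part of Lemma~\ref{fstar}; and that the square with top edge $\iota$, left edge $f$, right edge $f^*$, and bottom edge $\kappa$ commutes, which is exactly the commuting square asserted in Lemma~\ref{fstar}. So $\functn$ sends morphisms to morphisms.

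Next I would check functoriality, using the uniqueness clause of Lemma~\ref{fstar} twice. For identities: $\id_{X^*}$ is continuous and satisfies $\id_{X^*}\circ\iota=\iota=\iota\circ\id_X$, so by the uniqueness in Lemma~\ref{fstar} we get $(\id_X)^*=\id_{X^*}$, hence $\functn(\id_X,\id_K)=(\id_X,\id_{X^*})$, which is the identity morphism of $(X,X^*,\iota)$. For composition, suppose also $(g,G)\colon(Y,L,\gamma)\to(Z,M,\tau)$ is a morphism, with $(Z^*,\lambda)$ the fixed one-point compactification of $Z$. The composite $g\circ f$ is proper, since a composition of proper continuous maps is proper and continuous, so Lemma~\ref{fstar} characterizes $(g\circ f)^*$ as the unique continuous map with $(g\circ f)^*\circ\iota=\lambda\circ(g\circ f)$. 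Now $g^*\circ f^*$ is continuous and $(g^*\circ f^*)\circ\iota=g^*\circ(f^*\circ\iota)=g^*\circ\kappa\circ f=\lambda\circ g\circ f$, so uniqueness forces $(g\circ f)^*=g^*\circ f^*$. Since composition in $\com$ is componentwise, this gives $\functn\bigl((g,G)\circ(f,F)\bigr)=(g\circ f,(g\circ f)^*)=(g\circ f,g^*\circ f^*)=\functn(g,G)\circ\functn(f,F)$.

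I do not expect a genuine obstacle here; the argument is essentially bookkeeping with the universal property of the one-point compactification. The only points that require any attention are, first, that Lemma~\ref{fstar} is only available for proper maps, so one must note that properness is built into $\com$-morphisms and is preserved under composition; and second, that $\functn$ as constructed depends on the chosen family $\{(X^*,\iota)\}_X$ of one-point compactifications, though any such choice produces a functor (and, by uniqueness up to homeomorphism, naturally isomorphic ones).
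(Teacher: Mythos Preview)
Your proof is correct and essentially the same as the paper's approach, just spelled out in full detail: the paper simply writes ``This is clear,'' leaving the routine verifications of morphism well-definedness and functoriality via the uniqueness clause of Lemma~\ref{fstar} implicit, which is exactly what you carry out.
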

\begin{proof}
    This is clear.
\end{proof}

\begin{prop}\label{eta}
    The assignment $\eta_{(X,K,\vphi)} = (\id_X, \iota_K)$ gives a natural transformation $\eta: 1_\com \to \functn $.
\end{prop}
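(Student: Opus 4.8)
The plan is to verify two things: first, that each proposed component $\eta_{(X,K,\vphi)} = (\id_X, \iota_K)$ is genuinely a morphism in $\com$ from $(X,K,\vphi)$ to $\functn(X,K,\vphi) = (X,X^*,\iota)$; and second, that these components satisfy the naturality square for every morphism of $\com$.

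For the first point, recall that a morphism $(f,F)$ in $\com$ requires $f$ to be proper and continuous, $F$ to be continuous, and the defining square to commute. Here $\id_X$ is proper and continuous and $\iota_K : K \to X^*$ is continuous by Definition~\ref{onestar}, so the only thing to check is that $\iota_K \circ \vphi = \iota \circ \id_X$, that is, $\iota_K \circ \vphi = \iota$. But this is exactly the commutativity condition characterizing $\iota_K$ in Definition~\ref{onestar}, so this step is immediate.

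For naturality, let $(f,F) : (X,K,\vphi) \to (Y,L,\gamma)$ be a morphism and write $(Y^*,\kappa)$ for the one-point compactification of $Y$, so that $\functn(f,F) = (f,f^*)$ and $\eta_{(Y,L,\gamma)} = (\id_Y,\iota_L)$. Since composition in $\com$ is componentwise, the naturality square $\functn(f,F)\circ \eta_{(X,K,\vphi)} = \eta_{(Y,L,\gamma)} \circ (f,F)$ reduces to two equalities of maps: in the first coordinate, $f \circ \id_X = \id_Y \circ f$, which is trivial; and in the second coordinate, $f^* \circ \iota_K = \iota_L \circ F$ as continuous maps $K \to Y^*$. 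This second identity is the substance of the proof.

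To prove it I would use that $\vphi(X)$ is dense in $K$ (since $(K,\vphi)$ is a compactification) and that $Y^*$ is Hausdorff, so it suffices to check that the two maps agree after precomposition with $\vphi$. On the left, $\iota_K \circ \vphi = \iota$ by Definition~\ref{onestar}, and then $f^* \circ \iota = \kappa \circ f$ by Lemma~\ref{fstar} (here using that $f$ is proper). On the right, $F \circ \vphi = \gamma \circ f$ by commutativity of the morphism square for $(f,F)$, and then $\iota_L \circ \gamma = \kappa$ because $\iota_L$ is the map provided by Definition~\ref{onestar} for the compactification $(L,\gamma)$ of $Y$. Hence both composites equal $\kappa \circ f$ on $\vphi(X)$, and by density and the Hausdorff property of $Y^*$ they agree on all of $K$. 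The only genuine obstacle is this second-coordinate identity, and the density-plus-Hausdorffness argument dispatches it cleanly; everything else is bookkeeping with the defining universal properties.
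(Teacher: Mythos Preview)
Your proof is correct and follows essentially the same approach as the paper: both reduce naturality to the second-coordinate identity and verify it on the dense image $\vphi(X)$ via the same chain of commuting squares, then extend by continuity. The only discrepancy is notational---the paper writes $\kappa_L$ rather than $\iota_L$ for the map $L \to Y^*$, following the convention in Definition~\ref{onestar} that the subscript map inherits its name from the one-point-compactification embedding (here $\kappa$ for $Y$)---and you additionally make explicit that each component is a morphism and that the density argument uses the Hausdorff property of $Y^*$, both of which the paper leaves implicit.
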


\begin{proof}
    For all morphisms $(f,F):(X,K,\vphi) \to (Y,L,\gamma)$ we must check that the diagram
    \[\begin{tikzcd}[column sep=large]
        (X,K,\vphi) \arrow{r}{(\id_X,\iota_K)} \arrow[swap]{d}{(f,F)} & (X,X^*,\iota) \arrow{d}{(f,f^*)}\\
        (Y,L,\gamma)\arrow{r}{(\id_Y, \kappa_L)} & (Y, Y^*, \kappa). \\
    \end{tikzcd}\]
    commutes. 
    
    To check that a diagram commutes in our category amounts to checking it commutes in the first coordinate and in the second coordinate. That means checking that these two identities hold: 
    $$\id_Y \circ f = f \circ \id_X$$
    $$f^* \circ \iota_K = \kappa_L \circ F.$$
    
    The first identity is clear. For the second identity, let $\vphi(x) \in \vphi(X)$.  Then we have

    \begin{align*}
        f^* \circ \iota_K(\vphi(x)) & = f^* \circ \iota(x) & \text{by the commuting square for }(\id_X, \iota_K) \\
        & = \kappa \circ f(x) & \text{by the commuting square for } (f,f^*) \\
        & = \kappa_L \circ \gamma \circ f(x) & \text{by the commuting square for }(\id_Y, \kappa_L) \\
        & = \kappa_L \circ F (\vphi(x)) & \text{by the commuting square for }(f,F).
    \end{align*}

    Thus, $f^* \circ \iota_K$ and  $\kappa_L \circ F$ are continuous functions that agree on $\vphi(X)$, a dense subset of $K$, thus the identity holds on all of $K$ and the diagram commutes. 
\end{proof}

\begin{prop}
    The natural transformations $\functn \eta: \functn  \to \functn ^2$ and $\eta \functn : \functn  \to \functn ^2$ are natural isomorphisms.
\end{prop}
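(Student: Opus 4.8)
The plan is to show that both $\functn\eta$ and $\eta\functn$ are not merely natural isomorphisms but are in fact the identity natural transformation on $\functn$, once one observes that $\functn^2 = \functn$ on the nose.

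First I would record that $\functn$ depends only on first coordinates: by Proposition~\ref{minimizer} the value $\functn(X,K,\vphi) = (X, X^*, \iota)$ is determined by $X$ alone, and $\functn(f,F) = (f, f^*)$ is determined by $f$ alone. Hence applying $\functn$ to the object $(X, X^*, \iota)$ returns $(X, X^*, \iota)$, and applying $\functn$ to a morphism $(f, f^*) : (X, X^*, \iota) \to (Y, Y^*, \kappa)$ returns $(f, f^*)$ again; so $\functn^2 = \functn$ as endofunctors of $\com$, and consequently $\functn\eta$ and $\eta\functn$ are both natural transformations $\functn \to \functn$.

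Next I would compute components. For $\functn\eta$, its component at $(X,K,\vphi)$ is $\functn(\eta_{(X,K,\vphi)}) = \functn(\id_X, \iota_K) = (\id_X, (\id_X)^*)$ by Proposition~\ref{minimizer}. By Lemma~\ref{fstar}, $(\id_X)^* : X^* \to X^*$ is the \emph{unique} continuous map with $(\id_X)^* \circ \iota = \iota \circ \id_X = \iota$; since $\id_{X^*}$ has this property, uniqueness gives $(\id_X)^* = \id_{X^*}$, so this component is $\id_{(X, X^*, \iota)}$ and $\functn\eta = 1_\functn$. For $\eta\functn$, its component at $(X,K,\vphi)$ is $\eta_{\functn(X,K,\vphi)} = \eta_{(X, X^*, \iota)} = (\id_X, \iota_{X^*})$ by Proposition~\ref{eta}, where $\iota_{X^*} : X^* \to X^*$ is the map of Definition~\ref{onestar} for the compactification $(X^*, \iota)$, i.e.\ the unique surjective continuous map with $\iota_{X^*} \circ \iota = \iota$. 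Again $\id_{X^*}$ works, so by uniqueness $\iota_{X^*} = \id_{X^*}$, this component is $\id_{(X, X^*, \iota)}$, and $\eta\functn = 1_\functn$. Since the identity natural transformation is a natural isomorphism, both $\functn\eta$ and $\eta\functn$ are natural isomorphisms.

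I do not expect a genuine obstacle here: the only point requiring care is the bookkeeping that $\functn^2 = \functn$ literally holds (so that the stated codomain $\functn^2$ really is $\functn$), together with the two applications of a uniqueness clause --- Lemma~\ref{fstar} to see $(\id_X)^* = \id_{X^*}$ and Definition~\ref{onestar} to see $\iota_{X^*} = \id_{X^*}$. No topology beyond those uniqueness statements is needed. Combined with Proposition~\ref{eta}, this also establishes that $(\functn, \eta)$ is an idempotent monad on $\com$ in the sense of the first definition of Section~\ref{preliminaries}.
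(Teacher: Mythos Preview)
Your proposal is correct and follows essentially the same approach as the paper: you verify $\functn^2=\functn$ on the nose and then compute both components to be $(\id_X,\id_{X^*})$ by invoking the uniqueness clauses of Lemma~\ref{fstar} (for $(\id_X)^*$) and Definition~\ref{onestar} (for $\iota_{X^*}$), exactly as the paper does.
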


\begin{proof}
    First, we show that $\functn^2=\functn$. For any object $(X, K, \vphi)$ in $\com$ we have
    $$\functn ^2(X,K,\vphi) = \functn (X,X^*, \iota) = (X,X^*, \iota) = \functn (X,K,\vphi).$$

    For a morphism $(f,F):(X,K,\vphi) \to (Y,L,\gamma)$ we have $\functn^2(f,F):(X, X^*, \iota) \to (Y, Y^*, \kappa)$ where
    $$\functn ^2(f,F) = \functn (f,f^*) = (f,f^*) = \functn (f,F).$$

    Next, we consider $\functn  \eta: \functn \to \functn$. 
    $$\functn  \eta_{(X,K,\vphi)} = \functn (\id_X, \iota_K) = (\id_X, (\id_X)^*)=(\id_X,\id_{X^*}).$$

   The last equality follows from Lemma~\ref{fstar} as $(\id_X)^*:X^* \to X^*$ is the unique continuous map such that $(\id_X)^* \circ \iota = \iota$ so $(\id_X)^* =\id_{X^*}$. Thus, $\functn \eta: \functn  \to \functn $ is the identity transformation and therefore a natural isomorphism. 

   For $\eta \functn :\functn  \to \functn $ we have 
   $$\eta \functn (X,K,\vphi) = \eta_{(X,X^*,\iota)} = (\id_X, \iota_{X^*}) = (\id_X, \id_{X^*}).$$

   The last equality follows from Definition~\ref{onestar} as $\iota_{X^*}:X^* \to X^*$ is the unique continuous map such that $\iota_{X^*} \circ \iota = \iota$ so $\iota_{X^*} = \id_{X^*}$. Thus, $\eta \functn : \functn  \to \functn $ is the identity transformation and therefore a natural isomorphism. 
\end{proof}

\begin{definition}
    We denote by $\N$ the full subcategory of $\com$ whose objects $(X,K,\vphi)$ satisfy the property that $\eta_{(X,K,\vphi)}=(\id_X, \iota_K):(X,K,\vphi) \to (X,X^*, \iota)$ is an isomorphism. Let $\neta: \com \to \N$ be the functor obtained by regarding $\functn : \com \to \com$ as a functor from $\com$ to  $\N$ So, $\functn  = \inc_{\N} \circ \neta$.
\end{definition}

Notice, $(X, K, \vphi)$ is an object in $ \N$ if and only if $\iota_K:K \to X^*$ is a homeomorphism. So, we say $\N$ is the full subcategory of $\com$ containing all objects isomorphic to a one-point compactification triple.

We conclude the following from Lemma~\ref{monad}. 

\begin{thm} \label{comnthm}
    Let $\functn :\com \to \com$, $\eta: 1_\com \to \functn $, $\N$, and $\neta: \com \to \N$ be defined as above. Then: 
    \begin{enumerate}
        \item[(i)] $(\functn, \eta)$ is an idempotent monad on $\com$. 
        \item[(ii)] $\inc_{\N}$ is left adjointable, $\neta$ is a left adjoint of $\inc_\N$, and $\eta$ is the unit of the adjunction $\neta \dashv \inc_{\N}$. In particular, $\N$ is a reflective subcategory of $\com$. 
        \item[(iii)] For all objects $(X, K, \vphi)$ in $\com$, $(N(X,K,\vphi), \eta_{(X,K,\vphi)})$ is a universal morphism from $(X,K,\vphi)$ to $\N$. 
        
    \end{enumerate}
    Moreover, an object is in $\N$ if and only if it is isomorphic to a one-point compactification triple in $\com$. 
\end{thm}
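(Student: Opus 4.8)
The plan is to obtain parts (i)--(iii) by assembling results already in hand, and to settle the ``moreover'' clause with a short repleteness argument. For (i) I would simply note that Proposition~\ref{minimizer} gives that $\functn$ is a functor, Proposition~\ref{eta} gives that $\eta \colon 1_\com \to \functn$ is a natural transformation, and the proposition immediately preceding the definition of $\N$ shows that $\functn\eta$ and $\eta\functn$ are natural isomorphisms (in fact identity transformations); together these are exactly the definition of an idempotent monad. For (ii) and (iii), since $(\functn, \eta)$ is an idempotent monad and $\N$, $\neta$ have been defined precisely as in the paragraph preceding Lemma~\ref{monad}, I would invoke that lemma directly: it yields that $\inc_\N$ is left adjointable, that $\neta$ is a left adjoint of $\inc_\N$ with unit $\eta$, and hence that $\N$ is reflective, which is (ii); and its proof exhibits $(\neta x, \eta_x)$ as a universal morphism from $x$ to $\N$ for every object $x$, which is (iii).

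The remaining work is the ``moreover'' clause. One direction is immediate: if $(X,K,\vphi)$ lies in $\N$, then by definition $\eta_{(X,K,\vphi)} = (\id_X, \iota_K) \colon (X,K,\vphi) \to (X, X^*, \iota)$ is an isomorphism, and $(X, X^*, \iota)$ is a one-point compactification triple. For the converse I would first observe that $\N$ is replete: if $(f,F) \colon (X,K,\vphi) \to (Y,L,\gamma)$ is an isomorphism in $\com$ and $(Y,L,\gamma) \in \N$, then naturality of $\eta$ gives $\eta_{(X,K,\vphi)} = \functn(f,F)^{-1} \circ \eta_{(Y,L,\gamma)} \circ (f,F)$, a composite of isomorphisms since functors preserve isomorphisms, so $(X,K,\vphi) \in \N$. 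It then suffices to check that each one-point compactification triple $(Z, Z^*, \iota_Z)$ itself lies in $\N$; but by Definition~\ref{onestar} the map $\iota_{Z^*} \colon Z^* \to Z^*$ is the unique continuous map with $\iota_{Z^*} \circ \iota_Z = \iota_Z$, forcing $\iota_{Z^*} = \id_{Z^*}$, so $\eta_{(Z,Z^*,\iota_Z)} = (\id_Z, \id_{Z^*})$ is an isomorphism and $(Z,Z^*,\iota_Z) \in \N$.

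I do not expect a genuine obstacle: every ingredient has been prepared by the preceding propositions and by Lemma~\ref{monad}. The only point demanding slight care is the reading of ``isomorphic to a one-point compactification triple'' (an isomorphism in $\com$, whose first component is then automatically a homeomorphism of the underlying locally compact Hausdorff spaces), together with the use of repleteness, which lets the converse avoid reconstructing a homeomorphism $\iota_K \colon K \to X^*$ by hand.
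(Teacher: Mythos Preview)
Your proposal is correct and matches the paper's approach: for (i)--(iii) the paper likewise just assembles the preceding propositions and invokes Lemma~\ref{monad}, and for the ``moreover'' clause the paper relies on the one-line observation (made just before the theorem) that $(X,K,\vphi)\in\N$ iff $\iota_K$ is a homeomorphism. Your repleteness argument for the converse is a bit more thorough than the paper's terse remark---in particular it cleanly handles isomorphisms in $\com$ whose first component need not be $\id_X$---but the underlying idea is the same.
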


\begin{remark}
    Item (3) from Theorem~\ref{comnthm} begs the following diagram. Let $(X, K, \vphi)$ be an object in $\com$, $(Y,L,\gamma)$ be an object in $\N$ and $(f,F):(X,K,\vphi) \to (Y,L,\gamma)$, then we have that $(f, \iota_L^{-1} \circ f^*)$ is the unique morphism completing the following diagram:

    \[\begin{tikzcd}
        (X,K,\vphi) \arrow[swap]{d}{(\id_X,\iota_K)} \arrow{rd}{(f,F)} \\
        (X,X^*, \iota) \arrow[swap]{dr}{(f,f^*)} \arrow[dashed,swap]{r}{} & (Y,L,\gamma) \arrow{d}{(\id_Y, \iota_L)} \\
        & (Y,Y^*,\kappa).
    \end{tikzcd}\]
\end{remark}

For a fixed locally compact Hausdorff space $X$, $(X, \beta X, \rho)$ is an initial object in $\com_X$. If $(X,K,\vphi)$ is an object in the fiber $\com_X$, then Definition~\ref{betax} makes it clear that $(\id_X, \tild{\vphi})$ is the unique morphism from $(X, \beta X, \rho)$ to $(X,K,\vphi)$ in the fiber $\com_X$. We call $(X, \beta X, \rho)$ a \textit{Stone--\v{C}ech compactification triple. } 

\begin{prop}
    There is a functor $\functm: \com \to \com$ given on objects by setting $\functm(X,K,\vphi)$ to be an initial object in $\com_X$. That is, 
    $$\functm(X,K,\vphi) = (X, \beta X,\rho).$$
    $\functm$ is given on morphisms $(f,F):(X,K, \vphi) \to (Y,L,\gamma)$ by 
    $$\functm(f,F) = (f, \beta f).$$
\end{prop}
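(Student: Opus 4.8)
The plan is to imitate the verification already carried out for the monad $\functn$ in Propositions~\ref{minimizer} and~\ref{eta}, but with one extra point of care: unlike the identity $\functn^2=\functn$, here the functoriality of $\functm$ on morphisms is not a triviality, since $\functm(f,F)=(f,\beta f)$ depends on the universal property of the Stone--\v{C}ech compactification rather than on a one-line computation. I would organize the argument into three steps: (a) $\functm$ is well defined on objects; (b) $(f,\beta f)$ is genuinely a morphism of $\com$; (c) $\functm$ preserves identities and composition.

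For (a), fix once and for all, for each locally compact Hausdorff space $X$, a Stone--\v{C}ech compactification triple $(X,\beta X,\rho)$ (this exists and is essentially unique by the existence/uniqueness lemma for Stone--\v{C}ech compactifications recalled above), and set $\functm(X,K,\vphi)=(X,\beta X,\rho)$. Since the right-hand side depends only on the first coordinate $X$, this lands in the fiber $\com_X$, and by the discussion preceding the proposition it is the (unique up to the relevant isomorphism) initial object of $\com_X$. For (b), let $(f,F):(X,K,\vphi)\to(Y,L,\gamma)$ be a morphism of $\com$, so that $f:X\to Y$ is continuous and proper; in particular $f$ is continuous and proper, which is all that is required of the first coordinate of a morphism $(X,\beta X,\rho)\to(Y,\beta Y,\sigma)$. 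For the second coordinate, $\beta f=\tild{\sigma\circ f}$ by Notation~\ref{betaf}, and applying the defining property in Definition~\ref{betax} to the continuous map $\sigma\circ f:X\to\beta Y$ gives $\beta f\circ\rho=\sigma\circ f$, which is exactly the assertion that the square for $(f,\beta f)$ commutes; hence $(f,\beta f)$ is a morphism of $\com$.

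For (c), consider identities first: $\beta(\id_X)$ is by definition the unique continuous map $\beta X\to\beta X$ with $\beta(\id_X)\circ\rho=\rho\circ\id_X=\rho$, and $\id_{\beta X}$ has this same property, so the uniqueness clause of Definition~\ref{betax} forces $\beta(\id_X)=\id_{\beta X}$; thus $\functm(\id_X,\id_K)=(\id_X,\id_{\beta X})$. For composition, take $(f,F):(X,K,\vphi)\to(Y,L,\gamma)$ and $(g,G):(Y,L,\gamma)\to(Z,M,\tau)$ with Stone--\v{C}ech triples $(X,\beta X,\rho)$, $(Y,\beta Y,\sigma)$, $(Z,\beta Z,\upsilon)$. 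Both $\beta(g\circ f)$ and $\beta g\circ\beta f$ are continuous maps $\beta X\to\beta Z$, and both satisfy $(-)\circ\rho=\upsilon\circ g\circ f$ — the first by definition of $\beta(g\circ f)=\tild{\upsilon\circ g\circ f}$, the second because $\beta f\circ\rho=\sigma\circ f$ and $\beta g\circ\sigma=\upsilon\circ g$. Uniqueness in Definition~\ref{betax} then gives $\beta(g\circ f)=\beta g\circ\beta f$, and since composition in $\com$ is componentwise this yields $\functm\big((g,G)\circ(f,F)\big)=\functm(g,G)\circ\functm(f,F)$.

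The only place that demands genuine attention is the bookkeeping in step (c): one must consistently invoke the \emph{uniqueness} half of the Stone--\v{C}ech universal property and keep straight which embedding ($\rho$, $\sigma$, or $\upsilon$) plays the role of the map ``$f$'' in Definition~\ref{betax} at each stage. Everything else is formal, in direct parallel with the treatment of $\functn$.
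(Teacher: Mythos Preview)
Your proposal is correct and, if anything, more thorough than the paper's own argument: the paper simply declares ``This is clear'' (in direct parallel with Proposition~\ref{minimizer}), whereas you spell out the well-definedness on objects, the verification that $(f,\beta f)$ is a morphism, and the preservation of identities and composition via the uniqueness clause of Definition~\ref{betax}. There is no divergence in approach, only in level of detail.
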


\begin{proof}
    This is clear. 
\end{proof}

\begin{prop}
    The assignment $\psi_{(X,K,\vphi)}=(\id_X, \tild{\vphi})$ gives a natural transformation $\psi:\functm \to 1_\com$. 

    \begin{proof}
        We show that for all morphisms $(f,F):(X,K,\vphi) \to (Y,L,\gamma)$ the diagram 
        \[\begin{tikzcd}[column sep=large]
            (X,\beta X,\rho) \arrow[swap]{r}{(\id_X, \tild{\vphi})} \arrow{d}{(f,\beta f)} & (X,K,\vphi) \arrow{d}{(f,F)} \\
            (Y,\beta Y,\sigma) \arrow{r}{(\id_Y,\tild{\gamma})} & (Y,L, \gamma)
        \end{tikzcd}\]
        commutes. This is clear in the first coordinate. For the second we have
        \begin{align*}
            \tild{\gamma} \circ \beta f (\rho(x)) & = \tild{\gamma} \circ \sigma \circ f(x) & \text{by the commuting square for } (f,\beta f) \\
            &  = \gamma \circ f(x) & \text{by the commuting square for } (\id_Y, \tild{\gamma}) \\
            & = F \circ \vphi(x) &\text{by the commuting square for } (f,F) \\
            & = F \circ \tild{\vphi}(\rho(x)) & \text{by the commuting square for $(\id_X, \tild{\vphi})$},
        \end{align*}
        so $\tild{\gamma} \circ \beta f$ and $F \circ \tild{\vphi}$  are continuous and equal on $\rho(X)$, a dense subset of $\beta X$, therefore must be equal on all of $\beta X$. 
    \end{proof}
\end{prop}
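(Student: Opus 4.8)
The plan is to discharge the two obligations attached to any natural transformation. First I would confirm that each proposed component $\psi_{(X,K,\vphi)} = (\id_X, \tild{\vphi})$ is a genuine morphism of $\com$ with source $\functm(X,K,\vphi) = (X,\beta X,\rho)$ and target $1_\com(X,K,\vphi) = (X,K,\vphi)$. The first coordinate $\id_X$ is proper and continuous, so the only content is the second coordinate: $\tild\vphi\colon\beta X\to K$ is continuous by Definition~\ref{betax}, and the square a morphism of $\com$ must satisfy reads $\vphi\circ\id_X = \tild\vphi\circ\rho$, which is precisely the factorization $\vphi = \tild\vphi\circ\rho$ produced by the universal property of $\beta X$ applied to $\vphi$ itself. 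Hence every component is a legitimate arrow.

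Next I would verify naturality. Fixing a morphism $(f,F)\colon(X,K,\vphi)\to(Y,L,\gamma)$, the naturality square has top and bottom edges $(\id_X,\tild\vphi)$ and $(\id_Y,\tild\gamma)$ and vertical edges $\functm(f,F)=(f,\beta f)$ and $(f,F)$. Since commutativity in $\com$ is checked coordinatewise, and the first-coordinate identity $\id_Y\circ f = f\circ\id_X$ is immediate, the whole problem collapses to the single identity $\tild\gamma\circ\beta f = F\circ\tild\vphi$ of maps $\beta X\to L$.

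This second-coordinate identity is the main obstacle, and the key device is a density argument. Both sides are continuous on $\beta X$, the set $\rho(X)$ is dense in $\beta X$, and $L$ is Hausdorff, so it suffices to show the two maps agree on $\rho(X)$. For $x\in X$ I would unfold the defining relations in turn: $\beta f\circ\rho = \sigma\circ f$ from Notation~\ref{betaf}, then $\tild\gamma\circ\sigma = \gamma$ and $\tild\vphi\circ\rho = \vphi$ from Definition~\ref{betax}, and finally $F\circ\vphi = \gamma\circ f$, the commuting square that makes $(f,F)$ a morphism. Chaining these shows both $\tild\gamma\circ\beta f$ and $F\circ\tild\vphi$ carry $\rho(x)$ to $\gamma(f(x))$; agreement on the dense set $\rho(X)$ then forces equality everywhere, completing the check.
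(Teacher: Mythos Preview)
Your proof is correct and follows essentially the same approach as the paper: reduce the naturality square to the second-coordinate identity $\tild{\gamma}\circ\beta f = F\circ\tild{\vphi}$, verify this on the dense subset $\rho(X)$ by chaining the defining relations $\beta f\circ\rho=\sigma\circ f$, $\tild{\gamma}\circ\sigma=\gamma$, $F\circ\vphi=\gamma\circ f$, and $\tild{\vphi}\circ\rho=\vphi$, and conclude by continuity into a Hausdorff target. The only difference is that you explicitly check each component $\psi_{(X,K,\vphi)}$ is a morphism of $\com$, a detail the paper leaves implicit.
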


\begin{prop}
    $\functm\psi:\functm^2 \to \functm $ and $\psi \functm : \functm ^2 \to \functm $ are natural isomorphisms. 
\end{prop}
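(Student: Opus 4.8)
The plan is to mirror exactly the argument already given for $\functn\eta$ and $\eta\functn$, exploiting the fact that the Stone--\v{C}ech compactification of $X$ is itself a Stone--\v{C}ech compactification triple.

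\emph{Step 1: $\functm^2 = \functm$ as a functor.} For any object $(X,K,\vphi)$ in $\com$ we have $\functm^2(X,K,\vphi) = \functm(X,\beta X,\rho)$, and since $(X,\beta X,\rho)$ is an initial object of the fiber $\com_X$, applying $\functm$ returns it unchanged, so $\functm^2(X,K,\vphi) = (X,\beta X,\rho) = \functm(X,K,\vphi)$. On a morphism $(f,F):(X,K,\vphi)\to(Y,L,\gamma)$, the value $\functm(f,F) = (f,\beta f)$ depends only on the first coordinate $f$, so $\functm^2(f,F) = \functm(f,\beta f) = (f,\beta f) = \functm(f,F)$. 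Hence $\functm^2$ and $\functm$ coincide. As in the $\functn$ case, the only thing requiring a word of care here is the bookkeeping that makes this literally hold, namely that the functorial choice of initial object is the identity on Stone--\v{C}ech triples.

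\emph{Step 2: identify the two composites.} I would then compute
$$\functm\psi_{(X,K,\vphi)} = \functm(\id_X,\tild{\vphi}) = (\id_X,\ \beta\,\id_X),$$
and note that $\beta\,\id_X = \tild{\rho\circ\id_X} = \tild{\rho}$ is, by Definition~\ref{betax}, the unique continuous map $\beta X \to \beta X$ satisfying $\tild{\rho}\circ\rho = \rho$; since $\id_{\beta X}$ has this property, $\tild{\rho} = \id_{\beta X}$. Thus $\functm\psi$ is the identity natural transformation on $\functm$. Similarly,
$$\psi\functm_{(X,K,\vphi)} = \psi_{(X,\beta X,\rho)} = (\id_X,\tild{\rho}) = (\id_X,\id_{\beta X}),$$
so $\psi\functm$ is also the identity natural transformation on $\functm = \functm^2$.

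\emph{Conclusion and main obstacle.} Both $\functm\psi$ and $\psi\functm$ are then identity natural transformations, hence in particular natural isomorphisms, which is exactly what is required for $(\functm,\psi)$ to be an idempotent comonad on $\com$. There is no substantive obstacle: the statement is the precise dual of the one-point-compactification computation, and everything reduces, via the uniqueness clause of Definition~\ref{betax}, to the identifications $\beta\,\id_X = \id_{\beta X}$ and $\tild{\rho} = \id_{\beta X}$.
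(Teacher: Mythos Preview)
Your proof is correct and follows essentially the same approach as the paper: first verify $\functm^2=\functm$ on objects and morphisms, then compute that $\functm\psi$ and $\psi\functm$ are each the identity transformation via the identifications $\beta\,\id_X=\tild{\rho}=\id_{\beta X}$ coming from the uniqueness clause in Definition~\ref{betax}. The structure, computations, and justifications match the paper's proof almost line for line.
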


\begin{proof}
    First, we show that $\functm ^2 = \functm $. For any object $(X, K, \vphi)$ we have
    $$\functm ^2(X,K,\vphi) = \functm (X, \beta X, \rho) = (X,\beta X, \rho) = \functm (X,K,\vphi).$$
    For a morphism $(f,F):(X,K, \vphi) \to (Y,L,\gamma)$ we have $M^2(f,F):(X, \beta X, \rho) \to (Y, \beta Y, \sigma)$ where
    $$\functm ^2(f,F) = \functm (f, \beta f) = (f, \beta f) = \functm (f,F).$$
    Next, we consider $\functm  \psi: \functm  \to \functm $. For an object $(X,K,\vphi)$ we have 
    $$\functm \psi_{(X,K,\vphi)} = \functm (\id_X, \tild{\vphi}) = (\id_X, \beta \id_X) = (\id_X, \id_{\beta X}).$$

    The last equality holds because $\beta \id_X = \tild{\rho\circ \id_X}=\tild{\rho}$ is the unique morphism such that $\rho =  \tild{\rho} \circ \rho$ so $\beta \id_X = \id_{\beta X}$. Thus, $\functm  \psi:\functm  \to \functm $ is the identity transformation and therefore a natural isomorphism. 

    For $\psi \functm :\functm  \to \functm $ we have 
    $$\psi \functm (X,K,\vphi) = \psi_{(X,\beta X, \rho)} = (\id_X, \tild{\rho}) = (\id_X, \id_{\beta X}).$$
    Thus, $\psi \functm : \functm  \to \functm $ is the identity transformation and therefore a natural isomorphism. 
\end{proof}

\begin{definition}
    We denote by $\M$ the full subcategory of $\com$ whose objects $(X,K,\vphi)$ satisfy the property that $\psi_{(X,K,\vphi)}=(\id_X, \tild{\vphi}): (X,\beta X, \rho) \to (X,K,\vphi)$ is an isomorphism.  Let $\mpsi:\com \to \M$ be the functor obtained by regarding $\functm : \com \to \com$ as a functor from $\com$ to $\M$. So, $\functm  = \inc_{\M} \circ \mpsi$. 
\end{definition}

Notice, $(X,K,\vphi)$ is an object in $\M$ if and only if $\tild{\vphi}:\beta X \to K$ is a homeomorphism. Thus, $\M$ is the full subcategory of all objects in $\com$ isomorphic to a Stone--\v{C}ech compactification triple. 

We conclude the following from Lemma~\ref{comonad}.

\begin{thm}~\label{commthm}    Let $\functm :\com \to \com$, $\psi:\functm  \to 1_\com$, $\M$, and $\mpsi: \com \to \M$ be defined as above. Then: 
    \begin{enumerate}
        \item[(i)] $(\functm, \psi)$ is an idempotent comonad on $\com$. 
        \item[(ii)] $\inc_\M$ is right adjointable, $\mpsi$ is a right adjoint of $\inc_\M$, and $\psi$ is the counit of the adjunction $\inc_\M \dashv \mpsi$. In particular, $\M$ is a coreflective subcategory of $\com$. 
        \item[(iii)] For all objects $(Y,K,\vphi)$ in $\com$, $(M(X,K,\vphi),\psi_{(X,K,\vphi)})$ is a universal morphism from $\M$ to $(X,K,\vphi)$. 
    \end{enumerate}

    Moreover, an object is in $\M$ if and only if it is isomorphic to a Stone--\v{C}ech compactification triple in $\com$. 
\end{thm}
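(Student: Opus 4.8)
The strategy is to harvest items (i)--(iii) directly from the preceding propositions together with Lemma~\ref{comonad}, and then to verify the closing ``moreover'' clause by a transport-of-structure argument. For (i): the propositions above establish that $\functm:\com\to\com$ is a functor, that $\psi:\functm\to 1_\com$ is a natural transformation, and that $\functm\psi$ and $\psi\functm$ are natural isomorphisms, which is precisely the definition of an idempotent comonad. For (ii) and (iii): once (i) is known, the data $(\functm,\psi)$, $\M$, $\mpsi$ are exactly in the situation of the paragraph preceding Lemma~\ref{comonad}, so that lemma applies verbatim and yields that $\inc_\M$ is right adjointable with right adjoint $\mpsi$, that $\psi$ is the counit of $\inc_\M\dashv\mpsi$ (hence $\M$ is coreflective), and---reading off the universal-morphism assertion from the proof of Lemma~\ref{comonad}---that $\bigl(\functm(X,K,\vphi),\psi_{(X,K,\vphi)}\bigr)$ is a universal morphism from $\M$ to $(X,K,\vphi)$ for every object of $\com$.

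It remains to identify the objects of $\M$. Since $\psi_{(X,K,\vphi)}=(\id_X,\tild{\vphi})$, an object lies in $\M$ exactly when $\tild{\vphi}:\beta X\to K$ is a homeomorphism. If $(X,K,\vphi)\in\M$, then $\psi_{(X,K,\vphi)}:(X,\beta X,\rho)\to(X,K,\vphi)$ is itself an isomorphism onto a Stone--\v{C}ech compactification triple. Conversely, suppose $(X,K,\vphi)$ is isomorphic in $\com$ to a Stone--\v{C}ech triple $(X',\beta X',\rho')$. Because composition in $\com$ is componentwise, the two coordinates of the isomorphism are homeomorphisms $g:X'\to X$ and $G:\beta X'\to K$ with $G\rho'=\vphi g$. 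Using these to transport the universal property of Definition~\ref{betax} from $(\beta X',\rho')$ to $(K,\vphi)$---given $h:X\to L$ with $L$ compact Hausdorff, extend $h\circ g$ along $\rho'$ and post-compose with $G^{-1}$, using density of $\vphi(X)$ in $K$ for uniqueness---shows that $(K,\vphi)$ is a Stone--\v{C}ech compactification of $X$. The uniqueness lemma following Definition~\ref{betax} then forces $\tild{\vphi}:\beta X\to K$ to be a homeomorphism, so $(X,K,\vphi)\in\M$.

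The bulk of this proof is bookkeeping against Lemma~\ref{comonad} and the propositions preceding the theorem; the only step carrying genuine content is the converse just sketched, namely the stability of the Stone--\v{C}ech universal property under isomorphism in $\com$, and even that reduces to the routine transport-of-structure computation indicated above together with the observation that the coordinates of a $\com$-isomorphism are homeomorphisms.
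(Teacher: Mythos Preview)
Your proposal is correct and follows the paper's approach: the paper simply states ``We conclude the following from Lemma~\ref{comonad}'' and records the characterization of $\M$ in the sentence preceding the theorem, so items (i)--(iii) are exactly as you describe. Your treatment of the ``moreover'' clause is more thorough than the paper's, which just asserts the equivalence; your transport-of-structure argument for the converse is sound, though a shorter route is available: if $(g,G):(X',\beta X',\rho')\to(X,K,\vphi)$ is an isomorphism, naturality of $\psi$ gives $\psi_{(X,K,\vphi)}\circ\functm(g,G)=(g,G)\circ\psi_{(X',\beta X',\rho')}$, and since $\functm(g,G)=(g,\beta g)$ and $\psi_{(X',\beta X',\rho')}=(\id_{X'},\id_{\beta X'})$ are isomorphisms, so is $\psi_{(X,K,\vphi)}$.
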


\begin{remark}
    The diagram for item (3) in Theorem~\ref{commthm} follows. Let $(Y,L,\gamma)$ be an object in $\C$,  $(X,K,\vphi)$ an object in $\M$, and $(f,F):(X,K,\vphi) \to (Y,L,\gamma)$. Then $(f, \beta f \circ \tild{\vphi}^{-1})$ is the unique morphism completing the following diagram:

    \[\begin{tikzcd}
        (X,\beta X, \rho) \arrow[swap]{d}{(\id_X,\tild{\vphi})} \arrow{rd}{(f,\beta f)} \\
        (X,K,\vphi) \arrow[dashed]{r} \arrow[swap]{rd}{(f,F)} & (Y, \beta Y, \rho) \arrow{d}{(\id_Y,\tild{\gamma})} \\
        & (Y,L,\gamma).
    \end{tikzcd}\]
\end{remark}

  Now, we show that $\N$ and $\M$ satisfy the hypothesis given in Lemma~\ref{maxnor} and are therefore a pair satisfying the maximal-normal equivalence introduced in \cite{BKQ11}.

\begin{prop}\label{MetaNpsi}
    The natural transformations $\functm \eta: \functm  \to \functm \functn $ and $\functn \psi:\functn \functm  \to \functn $ are both isomorphisms. 
\end{prop}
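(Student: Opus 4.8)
The plan is to show that both natural transformations are, in fact, \emph{identity} natural transformations, which in particular makes them isomorphisms. The first step is to record that the composite functors collapse, namely $\functm\functn = \functm$ and $\functn\functm = \functn$ as endofunctors of $\com$. On objects, $\functm\functn(X,K,\vphi) = \functm(X,X^*,\iota) = (X,\beta X,\rho) = \functm(X,K,\vphi)$, and $\functn\functm(X,K,\vphi) = \functn(X,\beta X,\rho) = (X,X^*,\iota) = \functn(X,K,\vphi)$; on morphisms the same identities hold because $\functn(f,F) = (f,f^*)$ and $\functm(f,F) = (f,\beta f)$ depend only on the component $f$. This is exactly the reasoning already used to prove $\functn^2 = \functn$ and $\functm^2 = \functm$, now applied to the mixed composites.

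Next I would read off the components of the whiskered transformations. For an object $(X,K,\vphi)$, since $\eta_{(X,K,\vphi)} = (\id_X,\iota_K)$, we get $(\functm\eta)_{(X,K,\vphi)} = \functm(\id_X,\iota_K) = (\id_X,\beta\,\id_X)$. By the uniqueness clause in the universal property of the Stone--\v{C}ech compactification (invoked already when showing $\functm\psi$ is the identity), $\beta\,\id_X = \tild{\rho} = \id_{\beta X}$, so $(\functm\eta)_{(X,K,\vphi)}$ is the identity morphism of $\functm(X,K,\vphi) = \functm\functn(X,K,\vphi)$. Hence $\functm\eta\colon \functm \to \functm\functn$ is the identity natural transformation, and therefore a natural isomorphism. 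Dually, $(\functn\psi)_{(X,K,\vphi)} = \functn(\id_X,\tild{\vphi}) = (\id_X,(\id_X)^*)$, and Lemma~\ref{fstar} forces $(\id_X)^* = \id_{X^*}$; so $\functn\psi\colon \functn\functm \to \functn$ is likewise the identity natural transformation and a natural isomorphism.

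I do not expect a real obstacle. The only care needed is bookkeeping: correctly identifying the whiskered components $(\functm\eta)_c = \functm(\eta_c)$ and $(\functn\psi)_c = \functn(\psi_c)$, and noticing that $\functm\functn$ and $\functn\functm$ are literally $\functm$ and $\functn$, so that ``is the identity morphism'' is a meaningful assertion about these transformations. Once that is in place, the proposition reduces entirely to the two elementary facts $\beta\,\id_X = \id_{\beta X}$ and $(\id_X)^* = \id_{X^*}$, both established in the preceding propositions; combined with Lemma~\ref{maxnor}, it then yields that $\N$ and $\M$ are a maximal--normal equivalent pair.
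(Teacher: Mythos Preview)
Your proposal is correct and follows exactly the paper's approach: compute $\functm\eta_{(X,K,\vphi)} = \functm(\id_X,\iota_K) = (\id_X,\beta\,\id_X) = (\id_X,\id_{\beta X})$ and $\functn\psi_{(X,K,\vphi)} = \functn(\id_X,\tild{\vphi}) = (\id_X,(\id_X)^*) = (\id_X,\id_{X^*})$, concluding both are identity transformations. Your added remark that $\functm\functn = \functm$ and $\functn\functm = \functn$ is a helpful clarification the paper leaves implicit, but otherwise the arguments are the same.
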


\begin{proof}
    For an object $(X,K,\vphi)$ we have 
    \begin{align*}
        \functm \eta_{(X,K,\vphi)} = \functm (\id_X, \iota_K) = (\id_X, \beta \id_X)= (\id_X, \id_{\beta X}).
    \end{align*}

    So, $\functm \eta$ is a natural isomorphism. Similarly,

    \begin{align*}
        \functn \psi_{(X,K,\vphi)} =\functn (\id_X, \tild{\vphi})= (\id_X, (\id_X)^*) = (\id_X, \id_{X^*}).
    \end{align*}
    So, $\functn \psi$ is also a natural isomorphism. 
\end{proof}

As stated in Lemma~\ref{maxnor}, an immediate consequence of Proposition~\ref{MetaNpsi} is the following. 

\begin{thm}
    The categories $\N$ and $\M$ are equivalent. The adjunction $\neta \circ \inc_{\M} \dashv \mpsi \circ \inc_{\N}$ is an adjoint equivalence between $\N$ and $\M$. Moreover, we have 
    $$\neta \cong (\neta \circ \inc_{\M}) \circ \mpsi \text{ and } \mpsi \cong (\mpsi \circ \inc_{\N}) \circ \neta.$$
\end{thm}

\section{Unitizations of Commutative $C^*$-Algebras}\label{ucom}
In this section, we say that $\tau:A \to B$ is a homomorphism if $\tau$ is an adjoint preserving homomorphism between $C^*$-algebras $A$ and $B$. Similarly, any isomorphism between $C^*$-algebras is assumed to preserve adjoints.

Suppose $A$ is a $C^*$-algebra. A \textit{unitization} of $A$ is a pair $(B, \alpha)$ where $B$ is a unital $C^*$-algebra and $\alpha: A \to B$ is an injective homomorphism such that $\alpha(A)$ is an essential ideal in $B$.   

\begin{definition}\label{minunitization}
    A unitization $(\tild{A}, \iota)$ of a $C^*$-algebra $A$ is called a \textit{minimal unitization of $A$} if it has the following property. If $B$ is unital $C^*$-algebra and $\pi:A \to B$ is a homomorphism, then there exists a unique unital homomorphism $\tild{\pi}:\tild{A} \to B$ such that
    \[\begin{tikzcd}
        A \arrow{r}{\pi} \arrow[swap]{rd}{\iota} & B \\
        & \tild{A} \arrow[dashed,swap]{u}{\tild{\pi}}
    \end{tikzcd}\]
    commutes.
    
    Further, if $(B, \alpha)$ is a unitization of $A$, then $\tild{\alpha}$ is injective.
\end{definition}

\begin{lemma}
    If $A$ is a $C^*$-algebra, then there exists a minimal unitization of $A$. $(\tild{A}, \iota)$ is unique in that if $(B, \alpha)$ is another compactification with the property given in Definition~\ref{minunitization} then $\tild{\alpha}: \tild{A} \to B$ is an isomorphism. 
\end{lemma}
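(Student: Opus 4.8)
The plan is to exhibit the minimal unitization as the classical ``algebra obtained by adjoining a unit,'' to check the universal property of Definition~\ref{minunitization} by hand, and then to obtain uniqueness for free from that universal property. The only step with genuine analytic content is manufacturing a $C^*$-norm on the algebraic unitization, and this is entirely standard (one may also simply quote it from a textbook on $C^*$-algebras); everything else is formal manipulation, the one thing to watch being the non-unitality of $A$ (taken here, in keeping with the convention used for spaces), which is exactly what makes $\iota(A)$ essential and $\tild\alpha$ injective below.

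\textbf{Existence.} Let $\tild A = A \oplus \mb C$ carry the multiplication $(a,\lambda)(b,\mu) = (ab + \lambda b + \mu a,\; \lambda\mu)$, the involution $(a,\lambda)^* = (a^*,\ov\lambda)$ and the unit $(0,1)$, and let $\iota\colon A \to \tild A$, $\iota(a)=(a,0)$. Pick a faithful nondegenerate representation $\pi\colon A \to B(H)$ (Gelfand--Naimark). Then $(a,\lambda)\mapsto \pi(a)+\lambda I_H$ is a $*$-algebra homomorphism of $\tild A$ onto $\pi(A)+\mb C I_H$, and it is injective because $A$ has no unit: if $\pi(a)=-\lambda I_H$ with $\lambda\neq 0$ then $I_H\in\pi(A)$, which, $\pi$ being faithful, would make $A$ unital. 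Since $\pi(A)$ is closed and $\pi(A)+\mb C I_H$ is a finite-dimensional extension of it inside $B(H)$, this subalgebra is a unital $C^*$-algebra; transporting its norm makes $\tild A$ a unital $C^*$-algebra, with $\iota$ an isometric embedding and $\iota(A)$ a closed ideal of codimension one. This ideal is essential: if $(a,\lambda)$ annihilates $\iota(A)$ then $(\pi(a)+\lambda I_H)\pi(b)=0$ for all $b\in A$, and nondegeneracy forces $\pi(a)+\lambda I_H=0$, i.e. $(a,\lambda)=0$. Hence $(\tild A,\iota)$ is a unitization of $A$.

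\textbf{The universal property.} Given a unital $C^*$-algebra $B$ and a homomorphism $\pi\colon A\to B$, set $\tild\pi(a,\lambda)=\pi(a)+\lambda 1_B$. The twisted multiplication on $\tild A$ is precisely what makes $\tild\pi$ multiplicative; it clearly preserves adjoints and sends $(0,1)$ to $1_B$, and it is automatically continuous. It extends $\pi$ along $\iota$, and since $\iota(A)\cup\{(0,1)\}$ spans $\tild A$ it is the only unital homomorphism that does so. For the last clause of Definition~\ref{minunitization}: if $(B,\alpha)$ is a unitization of $A$, then $\ker\tild\alpha$ is an ideal of $\tild A$ meeting $\iota(A)$ only in $\iota(\ker\alpha)=0$, so essentiality of $\iota(A)$ gives $\ker\tild\alpha=0$; that is, $\tild\alpha$ is injective.

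\textbf{Uniqueness.} Suppose $(B,\alpha)$ is another pair with the property of Definition~\ref{minunitization}; in particular $B$ is unital. The universal property of $(\tild A,\iota)$ applied to $\alpha$ yields a unital homomorphism $\tild\alpha\colon\tild A\to B$ with $\tild\alpha\circ\iota=\alpha$, and that of $(B,\alpha)$ applied to $\iota$ yields a unital homomorphism $h\colon B\to\tild A$ with $h\circ\alpha=\iota$. Then $h\circ\tild\alpha$ and $\id_{\tild A}$ are both unital homomorphisms $\tild A\to\tild A$ whose composition with $\iota$ is $\iota$, hence equal by the uniqueness in the universal property; symmetrically $\tild\alpha\circ h=\id_B$. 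So $\tild\alpha$ is an isomorphism, which is the asserted uniqueness.
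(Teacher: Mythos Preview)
Your proof is correct and complete; the construction of $\tild A = A \oplus \mb C$ with the operator-norm realization, the verification of the universal property, and the uniqueness-from-universality argument are all standard and carefully handled (including the essentiality of $\iota(A)$, which you rightly tie to the non-unitality of $A$). The paper itself does not give a proof at all---it simply cites Murphy's textbook---so your argument is precisely the kind of textbook treatment being deferred to, only written out in full.
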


\begin{proof}
    \cite{GM}, page 12 and 39. 
\end{proof}

\begin{definition}\label{multiplieralgebra}
    A unitization $(M(A), \rho)$ of a $C^*$-algebra $A$ is called a \textit{maximal unitization of $A$} if it has the following property. If $(B,\alpha)$ is some other unitization of $A$, then there exists a unique injective homomorphism $\ov{\alpha}:B \to M(A)$ such that 
\[\begin{tikzcd}
        & M(A) \\
        A \arrow{ru}{\rho} \arrow{r}{\alpha} & B \arrow[dashed,swap]{u}{\ov{\alpha}}
    \end{tikzcd}\]
    commutes.
\end{definition}

\begin{lemma}
    If $A$ is a $C^*$-algebra, then there exists a maximal unitization of $A$. $(M(A), \rho)$ is unique in that if $(B,\alpha)$ is another compactification with the property given in Definition~\ref{multiplieralgebra} then $\ov{\alpha}:B \to M(A)$ is an isomorphism. 
\end{lemma}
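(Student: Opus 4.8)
The plan is to realize $M(A)$ as the multiplier algebra of $A$: the $C^*$-algebra of double centralizers $(L,R)$ of $A$, with $\rho:A\to M(A)$ the canonical map sending $a$ to the pair of left and right multiplications by $a$. Existence then rests on the standard facts (which I would cite, e.g.\ from \cite{GM} as was done for the minimal unitization) that $M(A)$ is a unital $C^*$-algebra, that $\rho$ is an injective homomorphism, and that $\rho(A)$ is an essential ideal of $M(A)$; together these say that $(M(A),\rho)$ is a unitization of $A$ in the sense defined above.

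The real content is the universal property. Given any unitization $(B,\alpha)$, I would define $\ov{\alpha}:B\to M(A)$ by sending $b\in B$ to the double centralizer $(L_b,R_b)$ with $L_b(a)=\alpha^{-1}(b\,\alpha(a))$ and $R_b(a)=\alpha^{-1}(\alpha(a)\,b)$; these are well defined because $\alpha(A)$ is an ideal of $B$ and $\alpha$ is injective, and the double-centralizer identity for $(L_b,R_b)$ is just associativity in $B$. One then checks that $\ov{\alpha}$ is a $*$-homomorphism, that it is unital (the unit of $B$ maps to the identity double centralizer, which is $1_{M(A)}$), and that $\ov{\alpha}\circ\alpha=\rho$. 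Injectivity of $\ov{\alpha}$ is precisely essentiality of $\alpha(A)$ in $B$: if $\ov{\alpha}(b)=0$ then $b\,\alpha(A)=0$, hence $b=0$.

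For uniqueness of $\ov{\alpha}$ making the triangle commute: if $\ov{\alpha}'$ is another such homomorphism, then for $b\in B$ and $a\in A$, $\ov{\alpha}(b)\rho(a)=\ov{\alpha}(b)\ov{\alpha}(\alpha(a))=\ov{\alpha}(b\alpha(a))=\ov{\alpha}'(b\alpha(a))=\ov{\alpha}'(b)\rho(a)$, where the middle equality uses that $b\alpha(a)\in\alpha(A)$ and both maps restrict to $\rho$ there; hence $(\ov{\alpha}(b)-\ov{\alpha}'(b))\rho(A)=0$, and essentiality of $\rho(A)$ in $M(A)$ forces $\ov{\alpha}(b)=\ov{\alpha}'(b)$. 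Finally, uniqueness of $(M(A),\rho)$ up to isomorphism is the usual formal argument: if $(B,\alpha)$ also satisfies the property of Definition~\ref{multiplieralgebra}, apply the universal property of $M(A)$ to $(B,\alpha)$ to get $\ov{\alpha}:B\to M(A)$ and the universal property of $B$ to $(M(A),\rho)$ to get $\ov{\rho}:M(A)\to B$; both $\ov{\rho}\circ\ov{\alpha}$ and $\ov{\alpha}\circ\ov{\rho}$ are injective homomorphisms fixing $\alpha$, resp.\ $\rho$, so the uniqueness clauses give $\ov{\rho}\circ\ov{\alpha}=\id_B$ and $\ov{\alpha}\circ\ov{\rho}=\id_{M(A)}$, whence $\ov{\alpha}$ is an isomorphism.

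The main obstacle is bookkeeping rather than conceptual difficulty. Establishing that $M(A)$ is a $C^*$-algebra with $A$ as an essential ideal is a genuine, if routine, construction, and the one delicate point in the universal property is verifying that $b\mapsto(L_b,R_b)$ really lands in $M(A)$ (it is norm-bounded and $*$-preserving) and then using essentiality in both directions --- once to obtain injectivity of $\ov{\alpha}$ and once to obtain its uniqueness. If one is willing to cite the construction and basic properties of $M(A)$ wholesale, the argument reduces entirely to the verification of the universal property sketched above.
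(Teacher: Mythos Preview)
Your argument is correct and is precisely the standard double-centralizer construction of the multiplier algebra together with its universal property; the paper itself does not give a proof but simply cites \cite{RW}, page~26, which contains exactly the construction and verification you sketch. In other words, you have supplied the details that the paper defers to the reference, with no substantive deviation.
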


\begin{proof}
    \cite{RW}, page 26. 
\end{proof}

$(M(A), \rho)$ has the following useful extension property. 

\begin{lemma}\label{mtau}
    Suppose $A$ and $B$ are $C^*$-algebras. Let $(M(A), \rho)$ and $(M(B),\sigma)$ denote their maximal unitizations. If $\tau:A \to B$ is a nondegenerate homomorphism, then there exists a unique unital homomorphism $M\tau:M(A) \to M(B)$ such that the diagram

    \[\begin{tikzcd}
        B \arrow{r}{\sigma} & M(B) \\
        A \arrow{u}{\tau} \arrow{r}{\rho} & M(A) \arrow[dashed,swap]{u}{M\tau}
    \end{tikzcd}\]
    commutes. 
\end{lemma}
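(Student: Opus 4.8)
The plan is to reduce the statement to the standard fact that a \emph{nondegenerate} homomorphism into a multiplier algebra extends uniquely to the multiplier algebra. First I would form the composite $\phi := \sigma \circ \tau \colon A \to M(B)$ and check that it is again nondegenerate. Since $\tau$ is nondegenerate, $\ov{\mathrm{span}}\,\tau(A)B = B$, and since $\sigma$ is injective with $\sigma(B)$ an essential ideal of $M(B)$ we get $\ov{\mathrm{span}}\,\phi(A)\sigma(B) = \ov{\mathrm{span}}\,\sigma\bigl(\tau(A)B\bigr) = \sigma(B)$, i.e. $\phi$ is a nondegenerate homomorphism of $A$ into $M(B)$. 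Then the desired map is $M\tau := \ov{\phi}$, the canonical extension, and the commuting square of the lemma is exactly the identity $\ov{\phi}\circ\rho = \phi = \sigma\circ\tau$. So the real content is to produce $\ov{\phi}$; this is standard (see \cite{RW}), but I would carry out the construction as follows.

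Fix a bounded approximate identity $(e_\lambda)$ for $A$, and recall that $M(A)$ multiplies $\rho(A)$, so for $m \in M(A)$ and $a \in A$ we have $m\cdot a, \, a\cdot m \in A$. On the dense subspace $\mathrm{span}\,\phi(A)B$ of $B$, define a left-multiplication operator by $L_m\bigl(\sum_k \phi(a_k)b_k\bigr) := \sum_k \phi(m\cdot a_k)b_k$, and symmetrically a right-multiplication operator $R_m\bigl(\sum_k b_k\phi(a_k)\bigr) := \sum_k b_k\phi(a_k\cdot m)$. Well-definedness is the crux: if $\sum_k \phi(a_k)b_k = 0$, then for each $\lambda$ one has $\sum_k \phi(m e_\lambda a_k)b_k = \phi(m e_\lambda)\bigl(\sum_k\phi(a_k)b_k\bigr) = 0$, and since $m e_\lambda a_k \to m\cdot a_k$ in norm this forces $\sum_k \phi(m\cdot a_k)b_k = 0$; the same computation gives $\|L_m x\| \le \|m\|\,\|x\|$, so $L_m$ and (likewise) $R_m$ extend to bounded operators on $B$. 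A short check shows $L_m$ is a left multiplier and $R_m$ a right multiplier of $B$, and the double-centralizer identity $b_1(L_m b_2) = (R_m b_1)b_2$ reduces, after approximating $b_1, b_2$ in the two dense subspaces, to the associativity $a\cdot(m\cdot a') = (a\cdot m)\cdot a'$ in the $M(A)$-bimodule $A$. Hence $\ov{\phi}(m) := (L_m, R_m) \in M(B)$.

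It then remains to check that $\ov{\phi}$ is a unital $*$-homomorphism extending $\phi$, and to verify uniqueness; both are routine density arguments evaluated on $\mathrm{span}\,\phi(A)B$. Multiplicativity follows from $L_{m_1 m_2} = L_{m_1}L_{m_2}$, unitality from $L_{1_{M(A)}} = \mathrm{id}_B$, adjoint-preservation from $\phi(m^*\cdot a) = (\phi(a^*\cdot m))^*$ together with $(m\cdot a)^* = a^*\cdot m^*$, and $\ov{\phi}(\rho(a)) = \phi(a)$ because $\rho(a)\cdot a' = aa'$. For uniqueness, any unital homomorphism $g\colon M(A)\to M(B)$ with $g\circ\rho = \phi$ satisfies $g(m)\phi(a) = g\bigl(\rho(m\cdot a)\bigr) = \phi(m\cdot a) = \ov{\phi}(m)\phi(a)$ for all $a \in A$, so $g(m)$ and $\ov{\phi}(m)$ agree on the dense subspace $\mathrm{span}\,\phi(A)B$ of $B$, hence as multipliers of $B$; thus $g = \ov{\phi}$. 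I expect the main obstacle to be precisely the well-definedness of the formulas for $L_m$ and $R_m$ and the verification that the pair $(L_m, R_m)$ genuinely lies in $M(B)$; note that the hypothesis that $\tau$ be nondegenerate is used essentially, since it is what makes $\mathrm{span}\,\phi(A)B$ dense in $B$ — this is the $C^*$-algebraic counterpart of the properness hypothesis needed for the extension $f^*$ in Lemma~\ref{fstar}.
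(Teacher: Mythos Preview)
Your proof is correct. The paper does not supply its own argument but simply cites \cite{RW}, page 27, where exactly this standard double-centralizer construction of the extension of a nondegenerate homomorphism to the multiplier algebra is carried out; you have essentially written out that cited proof in full.
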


\begin{proof}
    \cite{RW}, page 27. 
\end{proof}

\begin{lemma}\label{lemucom}
    There is a category $\ucom$ whose objects are triples $(A, B, \alpha)$ where $A$ is a non-unital commutative $C^*$-algebra and $(B, \alpha)$ is a unitization of $A$ where $B$ is commutative. A morphism is a pair $(\tau, \pi): (A, B, \alpha) \to (C,D, \beta)$ where $\tau: A \to C$ is a nondegenerate homomorphism and  $\pi:B \to D$ is a unital homomorphism such that the diagram
    \[\begin{tikzcd}
        C \arrow{r}{\beta} & D \\
        A \arrow{u}{\tau} \arrow{r}{\alpha} & B \arrow{u}{\pi}
    \end{tikzcd}\]
    commutes.
\end{lemma}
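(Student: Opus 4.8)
The plan is to follow the same template used to show that $\com$ is a category: specify identity morphisms, define composition componentwise, and verify the category axioms. The only point that is not purely formal is that the two kinds of homomorphism used as components — nondegenerate homomorphisms in the first slot, unital homomorphisms in the second — are each closed under composition and contain the appropriate identities; once that is established, associativity and the unit laws are inherited from the underlying category of $C^*$-algebras.

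First I would take the identity morphism of an object $(A,B,\alpha)$ to be $(\id_A,\id_B)$. The map $\id_B$ is unital since $B$ is unital, and $\id_A$ is nondegenerate because every $C^*$-algebra has an approximate identity, so $\overline{\id_A(A)\,A}=\overline{A^2}=A$; the square $\alpha\circ\id_A=\id_B\circ\alpha$ commutes trivially. Hence $(\id_A,\id_B)$ is a morphism $(A,B,\alpha)\to(A,B,\alpha)$. Next, given $(\tau,\pi)\colon(A,B,\alpha)\to(C,D,\beta)$ and $(\tau',\pi')\colon(C,D,\beta)\to(E,F,\gamma)$, I would define their composite to be $(\tau'\circ\tau,\ \pi'\circ\pi)$, and check three things. (i) $\pi'\circ\pi$ is unital: immediate from $\pi(1_B)=1_D$ and $\pi'(1_D)=1_F$. (ii) $\tau'\circ\tau$ is nondegenerate: using $\overline{\tau(A)C}=C$ and $\overline{\tau'(C)E}=E$ together with continuity of $\tau'$ and of multiplication,
\[
E=\overline{\tau'(C)\,E}=\overline{\tau'\bigl(\overline{\tau(A)C}\bigr)E}=\overline{\tau'(\tau(A))\,\tau'(C)\,E}\subseteq\overline{(\tau'\circ\tau)(A)\,E},
\]
so $\overline{(\tau'\circ\tau)(A)\,E}=E$. (iii) The outer square commutes: from $\beta\circ\tau=\pi\circ\alpha$ and $\gamma\circ\tau'=\pi'\circ\beta$ we get $\gamma\circ(\tau'\circ\tau)=(\gamma\circ\tau')\circ\tau=\pi'\circ\beta\circ\tau=\pi'\circ\pi\circ\alpha=(\pi'\circ\pi)\circ\alpha$. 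Since the source and target of the composite already have the required form (first component non-unital commutative $C^*$-algebra, etc.), $(\tau'\circ\tau,\pi'\circ\pi)$ is again a morphism in $\ucom$.

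Finally, associativity of composition and the left and right unit laws hold because composition in $\ucom$ is performed componentwise and, in each coordinate, composition of $C^*$-algebra homomorphisms is associative with $\id$ a two-sided identity; this is exactly the bookkeeping carried out for $\com$ (in particular, stacking two commuting squares again yields a commuting square). I expect step (ii), the stability of nondegeneracy under composition, to be the only part of the argument that requires an actual estimate rather than formal manipulation; everything else is routine.
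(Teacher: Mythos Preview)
Your proof is correct and follows the same approach as the paper: take $(\id_A,\id_B)$ as the identity morphism, define composition componentwise, and observe that stacking two commuting squares yields a commuting outer rectangle. The paper's proof simply asserts that compositions of nondegenerate (resp.\ unital) homomorphisms are nondegenerate (resp.\ unital), whereas you spell out the nondegeneracy argument explicitly; this extra detail is fine but not a genuinely different route.
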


\begin{proof}
    The identity morphism for an object $(A, B, \alpha)$ is $(\id_A, \id_B)$ and composition of morphisms is defined componentwise. This composition rule works because a composition of nondegenerate (resp. unital) homomorphisms is a nondegenerate (resp. unital) homomorphism and, in the following diagram, if the top and bottom squares in the left diagram commute, then the right diagram commutes:

    \[ \begin{array}{c@{\quad\text{ }\quad}c}
    \begin{tikzcd}
    E \arrow[r, "\gamma"] & F \\
    C \arrow[u, "\vphi"]  \arrow[r,"\beta"] & D \arrow[u,"\phi"]\\
    A \arrow[u,"\tau"] \arrow[r,"\alpha"] & B\arrow[u, "\pi"]
    \end{tikzcd}
    &
    \begin{tikzcd}
    E \arrow[r,"\gamma"] & F\\
    A \arrow[u,"\vphi \circ \tau"] \arrow[r,"\alpha"] & B.  \arrow[u,swap,"\phi \circ \pi"] 
    \end{tikzcd}
    \end{array}\]
\end{proof}

\begin{remark}
    We ask that $A$ is non-unital and that $\tau:A \to C$ is nondegenerate so that $\ucom$ is contravariantly equivalent to $\com$. Requiring $\tau:A \to C$ to be nondegenerate also allows us to consider the extension $M\tau: M(A) \to M(C)$ as given in Lemma~\ref{mtau} where $(M(A), \rho)$ and $(M(C), \sigma)$ are maximal unitizations of $A$ and $B$, respectively.
\end{remark}

We'd now like to show that $\com$ is contravariantly equivalent to $\ucom$ using Gelfand duality.

For a locally compact Hausdorff space $X$, let $C_0(X)$ denote the set of continuous functions that vanish at infinity on $X$ with the usual structure turning it into a $C^*$-algebra. For a continuous proper map $f:X \to Y$ between locally compact Hausdorff spaces, let $\functc f: C_0(Y) \to C_0(X)$ denote the nondegenerate homomorphism $\functc f(g) = g \circ f$.

Similarly, for a compact Hausdorff space $K$, let $C(K)$ denote the $C^*$-algebra of continuous functions on $K$, and if $F: K \to L$ is a continuous map between compact Hausdorff spaces, let $\functc F:C(L) \to C(K)$ be the unital homomorphism defined by $\functc (F)(G) = G \circ F$. 

To develop a functor from $\com$ to $\ucom$, we'd like to use the following lemma mentioned in \cite{RW}, page 24. 

\begin{lemma}\label{vphistar}
    Suppose $(X,K,\vphi)$ is an object in $\com$. Define $\vphi_*: C_0(X) \to C(K)$ by 
    $$\vphi_*(f)(z) = \begin{cases}
        f(x) \text{ if } z= \vphi(x) \in \vphi(X) \\ 
        0 \text{ if } z \notin \vphi(X)
    \end{cases}.$$
    Then $(C(K),\vphi_*)$ is a unitization of $C_0(X)$. 
\end{lemma}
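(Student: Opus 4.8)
The plan is to verify directly that $\vphi_* \colon C_0(X) \to C(K)$ is a well-defined injective $*$-homomorphism whose image is an essential ideal in the unital $C^*$-algebra $C(K)$. First I would check that $\vphi_*(f)$ is genuinely an element of $C(K)$, i.e.\ continuous on $K$. Since $\vphi$ is an embedding with dense image, $\vphi_*(f)$ agrees with $f \circ \vphi^{-1}$ on the dense open set $\vphi(X)$ and is $0$ on the compact "remainder" $K \setminus \vphi(X)$; continuity at a point of the remainder follows because $f$ vanishes at infinity, so for any $\varepsilon > 0$ the set $\{x : |f(x)| \ge \varepsilon\}$ is compact, hence its image under $\vphi$ is a compact (thus closed) subset of $K$ disjoint from the remainder, and its complement is a neighborhood on which $|\vphi_*(f)| < \varepsilon$. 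Continuity on $\vphi(X)$ is immediate since $\vphi$ is an embedding. This is the step I expect to be the main obstacle, as it is the only place where the vanishing-at-infinity hypothesis and the topology of $K$ genuinely interact; the rest is bookkeeping.

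Next I would observe that $\vphi_*$ is an injective $*$-homomorphism. The algebraic identities $\vphi_*(f + g) = \vphi_*(f) + \vphi_*(g)$, $\vphi_*(fg) = \vphi_*(f)\vphi_*(g)$, $\vphi_*(\lambda f) = \lambda \vphi_*(f)$, and $\vphi_*(\ov{f}) = \ov{\vphi_*(f)}$ all hold pointwise: on $\vphi(X)$ they reduce to the corresponding identities for $f, g$ in $C_0(X)$, and off $\vphi(X)$ both sides are $0$. Injectivity is clear since $\vphi$ is injective: if $\vphi_*(f) = 0$ then $f(x) = \vphi_*(f)(\vphi(x)) = 0$ for all $x \in X$. (One may also note $\|\vphi_*(f)\|_\infty = \|f\|_\infty$, so $\vphi_*$ is isometric, which re-proves injectivity.)

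Then I would show $\vphi_*(C_0(X))$ is an ideal in $C(K)$: given $g \in C(K)$ and $f \in C_0(X)$, the product $g \cdot \vphi_*(f)$ vanishes off $\vphi(X)$ and on $\vphi(X)$ equals $\vphi_*\big((g \circ \vphi) \cdot f\big)$, and $(g\circ\vphi)\cdot f \in C_0(X)$ because $g \circ \vphi$ is a bounded continuous function on $X$ and $f$ vanishes at infinity. For essentiality, suppose $g \in C(K)$ satisfies $g \cdot \vphi_*(f) = 0$ for all $f \in C_0(X)$; restricting to $\vphi(X)$ and using that $C_0(X)$ separates points of $X$ and vanishes nowhere (so for each $x$ there is $f$ with $f(x) \ne 0$), we get $g(\vphi(x)) = 0$ for all $x$, hence $g$ vanishes on the dense set $\vphi(X)$ and so $g = 0$ by continuity. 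Finally, $C(K)$ is unital with unit the constant function $1$, so $(C(K), \vphi_*)$ is a unitization of $C_0(X)$ as claimed. $\square$
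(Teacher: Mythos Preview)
Your argument is correct. The paper does not actually prove this lemma; it simply states it with the remark that it is ``mentioned in \cite{RW}, page 24'' and moves on. Your proposal therefore supplies a full direct verification where the paper only gives a citation. The continuity argument at points of the remainder $K\setminus\vphi(X)$, the pointwise check that $\vphi_*$ is a $*$-homomorphism, the ideal and essentiality verifications, and the observation that $C(K)$ is unital are all sound. One small point you use implicitly is that $\vphi(X)$ is open in $K$; this is the standard fact that a locally compact dense subspace of a Hausdorff space is open, and it is what makes continuity of $f\circ\vphi^{-1}$ on $\vphi(X)$ upgrade to continuity of $\vphi_*(f)$ at those points of $K$. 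You might state this explicitly, but it does not affect the validity of the proof.
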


\begin{prop}
    There is a contravariant functor $\functc:\com \to \ucom$ that acts on objects by 
    $$\functc(X,K,\vphi) = (C_0(X),C(K),\vphi_*)$$
    and on morphisms $(f,F):(X,K,\vphi) \to (Y,L,\gamma)$ by 
    $$\functc(f,F) = (\functc f, \functc F):(C_0(Y), C(L), \gamma_*) \to (C_0(X),C(K),\vphi_*).$$  
\end{prop}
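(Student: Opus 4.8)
The plan is to verify the four conditions that make $\functc$ a contravariant functor: it sends objects of $\com$ to objects of $\ucom$, it sends a morphism $(f,F)$ to a $\ucom$-morphism with source and target interchanged, it preserves identities, and it reverses composition. The object and functoriality parts are essentially bookkeeping; the one step needing care is the morphism part, specifically the commuting square.

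For objects, given $(X,K,\vphi)$ in $\com$ I would note that $C_0(X)$ is commutative and, since $X$ is non-compact, non-unital; that $C(K)$ is commutative because $K$ is a compact Hausdorff space; and that $(C(K),\vphi_*)$ is a unitization of $C_0(X)$ by Lemma~\ref{vphistar}. Hence $(C_0(X),C(K),\vphi_*)$ is an object of $\ucom$.

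For morphisms, let $(f,F)\colon(X,K,\vphi)\to(Y,L,\gamma)$, so that $F\circ\vphi=\gamma\circ f$ and $f$ is proper. First, $\functc f$ genuinely maps $C_0(Y)$ into $C_0(X)$ and is nondegenerate: this is the standard consequence of properness (for $g\in C_0(Y)$ the set $\{|g|\ge\varepsilon\}$ is compact, so its $f$-preimage is compact, whence $g\circ f\in C_0(X)$; and a Urysohn-function argument on $Y$ shows $\functc f(C_0(Y))\,C_0(X)$ is dense in $C_0(X)$). This is exactly where properness of $f$ is used. Second, $\functc F$ is a unital homomorphism because it carries the constant function $1$ on $L$ to the constant function $1$ on $K$. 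It then remains to verify the square $\vphi_*\circ\functc f=\functc F\circ\gamma_*$ of maps $C_0(Y)\to C(K)$, and I would argue this by the density device used repeatedly in Section~\ref{compactifications}: both composites lie in $C(K)$ (the left one by Lemma~\ref{vphistar} applied to $g\circ f\in C_0(X)$, the right one as a composite of continuous maps), and on the dense subset $\vphi(X)\subseteq K$ one computes, for $g\in C_0(Y)$ and $x\in X$,
\begin{align*}
\vphi_*(\functc f(g))(\vphi(x)) &= g(f(x)) = \gamma_*(g)(\gamma(f(x))) \\
&= \gamma_*(g)(F(\vphi(x))) = \functc F(\gamma_*(g))(\vphi(x)),
\end{align*}
using $F\circ\vphi=\gamma\circ f$ in the third equality. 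Two continuous functions on $K$ that agree on the dense set $\vphi(X)$ agree everywhere, so the square commutes and $(\functc f,\functc F)$ is a morphism $(C_0(Y),C(L),\gamma_*)\to(C_0(X),C(K),\vphi_*)$ in $\ucom$.

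Finally, $\functc(\id_X,\id_K)=(\functc\,\id_X,\functc\,\id_K)=(\id_{C_0(X)},\id_{C(K)})$, and since composition of morphisms in both $\com$ and $\ucom$ is componentwise, contravariant functoriality reduces to the coordinatewise identities $\functc(g\circ f)=\functc f\circ\functc g$ and $\functc(G\circ F)=\functc F\circ\functc G$, each an instance of $h\mapsto h\circ(g\circ f)=(h\circ g)\circ f$. I expect the commuting square to be the one place where a careless argument could slip—one is tempted to reason pointwise on all of $K$ rather than only on the dense image $\vphi(X)$—so that is the step I would write out most carefully; everything else is routine.
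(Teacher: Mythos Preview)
Your proposal is correct and follows essentially the same approach as the paper: the paper's proof simply cites Lemma~\ref{vphistar} for the object part and declares the commuting square a ``straightforward calculation,'' and what you have written is precisely that calculation carried out via the density-of-$\vphi(X)$ argument used throughout Section~\ref{compactifications}. Your additional checks (nondegeneracy of $\functc f$, unitality of $\functc F$, preservation of identities, reversal of composition) are details the paper leaves implicit but which belong to the same routine verification.
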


\begin{proof}
    Lemma~\ref{vphistar} confirms that $\functc{(X,K,\vphi)}$ is an object in $\ucom$ and straightforward calculations show the diagram
    \[\begin{tikzcd}
    C_0(X) \arrow{r}{\vphi_*} & C(K) \\
    C_0(Y) \arrow{u}{\functc f} \arrow{r}{\gamma_*} & C(L) \arrow{u}{\functc F}.
\end{tikzcd}\]
commutes. 
\end{proof}

Now, we establish a functor from $\ucom$ to $\com$. For a commutative $C^*$ algebra $A$, let $\widehat{A}$ denote the locally compact Hausdorff space of characters on $A$. For a nondegenerate homomorphism $\tau:A \to B$ between commutative $C^*$-algebras, let $\hat{\tau}: \widehat{B} \to \widehat{A}$ denote the continuous proper map $\hat{\tau}(\gamma) = \gamma \circ \tau$. 

It is a routine exercise to prove the following lemma. 

\begin{lemma}\label{utoc}
    Suppose $(A, B,\alpha)$ is an object in $\ucom$. Then there exists an embedding $\phi_\alpha: \widehat{A} \to \widehat{B}$ with dense range. That is, there is a map $\phi_\alpha:\widehat{A} \to \widehat{B}$ so that $(\widehat{B}, \phi_\alpha)$ is a compactification of $\widehat{A}$. 
\end{lemma}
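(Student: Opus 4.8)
The plan is to unwind the statement through Gelfand duality: a character of $A$ ought to extend uniquely to a character of $B$ along $\alpha$, and this extension assignment is the desired $\phi_\alpha$. First I would record the standing facts. Since $\alpha$ is an injective $*$-homomorphism of $C^*$-algebras it is isometric, so $J := \alpha(A)$ is a \emph{closed} ideal of the commutative unital $C^*$-algebra $B$, it is essential by hypothesis, and $\alpha : A \to J$ is a $*$-isomorphism; hence $\gamma \mapsto \gamma \circ \alpha^{-1}$ is a homeomorphism $\widehat{A} \to \widehat{J}$ for the weak-$*$ topologies. Also $\widehat{B}$ is compact Hausdorff because $B$ is unital, while $\widehat{A}$ is locally compact Hausdorff and non-compact because $A$ is commutative and non-unital. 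So it suffices to produce an embedding of $\widehat{J}$ into $\widehat{B}$ with dense range.

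Next I would invoke (and, since the lemma is billed as routine, spell out) the standard identification of $\widehat{J}$ with an open subset of $\widehat{B}$. For $\chi \in \widehat{B}$ with $\chi|_J \neq 0$, the restriction $\chi|_J$ is a character of $J$. Conversely, a character $\chi_0$ of $J$ extends to $B$ via $\chi(b) = \chi_0(bj)/\chi_0(j)$ for any $j \in J$ with $\chi_0(j) \neq 0$; one checks this is independent of the choice of $j$ and is multiplicative using the multiplicativity of $\chi_0$ on $J$ together with the ideal property of $J$ in $B$, and it is the \emph{unique} extension since any extension $\chi$ must satisfy $\chi(b)\chi_0(j) = \chi_0(bj)$. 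Restriction and extension are mutually inverse and continuous, giving a homeomorphism between $U := \{\chi \in \widehat{B} : \chi|_J \neq 0\}$, which is open in $\widehat{B}$, and $\widehat{J}$. Composing with the homeomorphism $\widehat{A} \cong \widehat{J}$, I define $\phi_\alpha : \widehat{A} \to \widehat{B}$ to send $\gamma$ to the unique character of $B$ with $\phi_\alpha(\gamma)\circ\alpha = \gamma$; it is then a homeomorphism of $\widehat{A}$ onto the (open) set $U$, hence an embedding.

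It remains to prove $\phi_\alpha$ has dense range, i.e.\ $\overline{U} = \widehat{B}$. Note $\widehat{B}\setminus U = \{\chi : \chi(J) = 0\}$. Suppose $b \in B$ has Gelfand transform vanishing on $U$. For every $a \in A$ and every $\chi \in \widehat{B}$, the product $\widehat{b\alpha(a)}(\chi) = \chi(b)\chi(\alpha(a))$ vanishes: the first factor if $\chi \in U$, the second if $\chi \notin U$. Hence $b\alpha(a) = 0$ for all $a \in A$, so $b$ lies in the annihilator of the essential ideal $J = \alpha(A)$ in $B$, which is $\{0\}$; thus $b = 0$. Therefore no nonzero element of $C(\widehat{B}) \cong B$ vanishes on $U$, so $U$ is dense. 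Combined with the first two paragraphs this shows $(\widehat{B}, \phi_\alpha)$ is a compactification of $\widehat{A}$.

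The main obstacle — or at least the least mechanical step, and the only place the essentiality hypothesis is actually used — is this last density argument, namely the passage from ``$\alpha(A)$ is an essential ideal'' to ``its annihilator in $B$ is zero'' to density of $U$. The extension-of-characters-from-an-ideal step is routine but worth writing out carefully, since it underpins both the well-definedness of $\phi_\alpha$ and the homeomorphism claim.
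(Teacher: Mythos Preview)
The paper omits this proof entirely, labeling it a routine exercise, so there is no argument to compare against. Your proof is correct and is exactly the standard Gelfand-duality verification one expects; in particular your defining property $\phi_\alpha(\gamma)\circ\alpha=\gamma$ is consistent with how the paper uses $\phi_\alpha$ downstream (the commuting squares for $\spec$ and the identity $\phi_{\vphi_*}\circ\ev_X=\ev_K\circ\vphi$), so nothing needs adjustment.
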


\begin{prop}
    There is a contravariant functor $\spec: \ucom \to \com$ that acts on objects by 
    $$\spec(A, B, \alpha) = (\widehat{A}, \widehat{B}, \phi_\alpha)$$ 
    and on morphisms $(\tau, \pi):(A, B, \alpha) \to (C, D, \beta)$ by 
    $$\spec(\tau, \pi) = (\hat{\tau} , \hat{\pi} ):(\widehat{C}, \widehat{D}, \phi_\beta) \to (\widehat{A}, \widehat{B}, \phi_\alpha)$$
\end{prop}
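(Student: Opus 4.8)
The plan is to verify the three conditions making $\spec$ a contravariant functor: that it sends an object of $\ucom$ to an object of $\com$, that it sends a morphism of $\ucom$ to a morphism of $\com$ between the correct (swapped) objects, and that it preserves identities while reversing composition.

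For objects, given $(A,B,\alpha)$ in $\ucom$, Lemma~\ref{utoc} already supplies an embedding $\phi_\alpha\colon\widehat A\to\widehat B$ with dense range, so $(\widehat B,\phi_\alpha)$ is a compactification of $\widehat A$; here $\widehat B$ is compact Hausdorff because $B$ is a unital commutative $C^*$-algebra, while $\widehat A$ is locally compact Hausdorff because $A$ is commutative. The one remaining point is that $\widehat A$ is \emph{non}-compact, which holds because $A$ is non-unital (a commutative $C^*$-algebra is unital exactly when its spectrum is compact). Hence $(\widehat A,\widehat B,\phi_\alpha)$ is an object of $\com$.

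For morphisms, let $(\tau,\pi)\colon(A,B,\alpha)\to(C,D,\beta)$ be given. The map $\hat\tau\colon\widehat C\to\widehat A$ is continuous and proper since $\tau$ is nondegenerate; the map $\hat\pi\colon\widehat D\to\widehat B$, $\hat\pi(\gamma)=\gamma\circ\pi$, is well defined because $\pi$ is unital, so $\gamma\circ\pi(1_B)=\gamma(1_D)=1\neq0$, and it is automatically continuous and proper as a map between compact Hausdorff spaces. The substantive step is that the square
\[\begin{tikzcd}
\widehat C \arrow{r}{\phi_\beta} \arrow[swap]{d}{\hat\tau} & \widehat D \arrow{d}{\hat\pi} \\
\widehat A \arrow[swap]{r}{\phi_\alpha} & \widehat B
\end{tikzcd}\]
commutes, i.e.\ $\hat\pi\circ\phi_\beta=\phi_\alpha\circ\hat\tau$. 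I would prove this using the key property of the embedding from Lemma~\ref{utoc}: $\phi_\alpha(\chi)$ is the unique character of $B$ whose composition with $\alpha$ equals $\chi$, uniqueness following because two characters of $B$ agreeing on the essential ideal $\alpha(A)$ must coincide. Then for $\chi\in\widehat C$ one computes
\[\hat\pi(\phi_\beta(\chi))\circ\alpha=\phi_\beta(\chi)\circ\pi\circ\alpha=\phi_\beta(\chi)\circ\beta\circ\tau=\chi\circ\tau=\hat\tau(\chi),\]
using the commuting square of $(\tau,\pi)$ for the middle equality and the defining property of $\phi_\beta$ for the last; since $\tau$ is nondegenerate, $\chi\circ\tau$ is a nonzero character of $A$, so $\hat\pi(\phi_\beta(\chi))$ is forced by uniqueness to equal $\phi_\alpha(\hat\tau(\chi))$.

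Finally, functoriality is immediate from Gelfand duality: $\widehat{\id_A}=\id_{\widehat A}$, $\widehat{\id_B}=\id_{\widehat B}$, and $\widehat{\tau'\circ\tau}=\hat\tau\circ\widehat{\tau'}$ together with the analogous identity for the unital homomorphisms, so $\spec$ preserves identities and satisfies $\spec((\tau',\pi')\circ(\tau,\pi))=\spec(\tau,\pi)\circ\spec(\tau',\pi')$. The one step that is more than bookkeeping is the commutativity of the square above, and the crux there is making explicit how $\phi_\alpha$ is built in Lemma~\ref{utoc} and isolating the uniqueness statement for extensions of a character across an essential ideal; once that is in hand the verification is a two-line diagram chase, and all the topological side conditions on $\hat\tau$ and $\hat\pi$ are formal consequences of Gelfand duality.
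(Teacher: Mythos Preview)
Your proof is correct and follows the same approach as the paper's: invoke Lemma~\ref{utoc} to see that $\spec$ sends objects to objects, then verify that the square with vertices $\widehat C,\widehat D,\widehat A,\widehat B$ commutes. The paper's proof merely records that these are ``straightforward calculations,'' whereas you have actually carried them out (and usefully noted the non-compactness of $\widehat A$ and the functoriality conditions, which the paper omits entirely).
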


\begin{proof}
Lemma~\ref{utoc} confirms that $\spec(A, B,\alpha)$ is an object in $\com$ and straightforward calculations show the diagram
\[\begin{tikzcd}
    \widehat{C} \arrow{r}{\phi_\beta}\arrow{d}{\hat{\tau} } & \widehat{D} \arrow{d}{\hat{\pi} } \\
    \widehat{A} \arrow{r}{\phi_\alpha} & \widehat{B}.
\end{tikzcd}\]
commutes.
\end{proof}

For a locally compact (or compact) Hausdorff space $X$, let $\ev_X:X \to \widehat{C_0(X)}$ denote the evaluation homeomorphism. That is, $\ev_X(x)(f) = f(x)$. For a commutative $C^*$-algebra $A$, let $\Gamma_A: A \to C_0(\widehat{A})$ denote the Gelfand transform. That is, $\Gamma_A(a)(\gamma) = \gamma(a)$. 

\begin{prop}
    $\spec\functc: \com \to \com$ is naturally isomorphic to $1_\com$ via the natural isomorphism $\ev:1_\com \to \spec \functc$ given by $\ev_{(X,K,\vphi)} = (\ev_X, \ev_K)$.

    $\functc \spec: \ucom \to \ucom$ is naturally isomorphic to $1_\ucom$ via the natural isomorphism $\Gamma:1_{\ucom} \to \functc \spec$ given by $\Gamma_{(A,B,\alpha)} = (\Gamma_A, \Gamma_B)$. 
    
    \end{prop}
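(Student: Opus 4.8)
The plan is to verify that the two assignments $\ev_{(X,K,\vphi)} = (\ev_X, \ev_K)$ and $\Gamma_{(A,B,\alpha)} = (\Gamma_A, \Gamma_B)$ are well-defined morphisms in the respective categories, that each is an isomorphism, and that each is natural. Since the heavy lifting at the level of $C^*$-algebras and spaces is classical Gelfand duality, the real content here is organizational: confirming that the square-compatibility built into the objects of $\com$ and $\ucom$ (the embeddings $\vphi_*$ and $\phi_\alpha$) is respected, and that properness and nondegeneracy are preserved so that the component maps are legal morphisms in $\com$ and $\ucom$.

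First I would handle $\ev: 1_\com \to \spec\functc$. Unwinding the definitions, $\spec\functc(X,K,\vphi) = (\widehat{C_0(X)}, \widehat{C(K)}, \phi_{\vphi_*})$, so I must check (a) $\ev_X: X \to \widehat{C_0(X)}$ is a homeomorphism and $\ev_K: K \to \widehat{C(K)}$ is a homeomorphism (both are the classical Gelfand evaluation maps, and the latter is a homeomorphism of compact Hausdorff spaces); (b) the square
\[\begin{tikzcd}
X \arrow{r}{\vphi} \arrow{d}{\ev_X} & K \arrow{d}{\ev_K} \\
\widehat{C_0(X)} \arrow{r}{\phi_{\vphi_*}} & \widehat{C(K)}
\end{tikzcd}\]
commutes, which reduces to checking that for $x \in X$ and $f \in C_0(X)$ one has $\phi_{\vphi_*}(\ev_X(x))(f) = \vphi_*(f)(\vphi(x)) = f(x) = \ev_K(\vphi(x))(\vphi_*(f))$, using the defining formula for $\vphi_*$ in Lemma~\ref{vphistar} together with the characterization of $\phi_{\vphi_*}$ from Lemma~\ref{utoc}; and (c) naturality: for a morphism $(f,F)$, the cube whose two horizontal faces are the above squares and whose vertical arrows are $\functc$-$\spec$ images of $f$ and $F$ commutes — this again comes down to $\widehat{\functc f} \circ \ev_Y = \ev_X \circ f$ and the analogous identity for $F$, which are the standard naturality statements for the Gelfand evaluation map. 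One also records that $\ev_X$ being a homeomorphism makes it proper, so $(\ev_X,\ev_K)$ is genuinely a morphism in $\com$.

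For $\Gamma: 1_\ucom \to \functc\spec$ the argument is formally dual. Here $\functc\spec(A,B,\alpha) = (C_0(\widehat{A}), C(\widehat{B}), (\phi_\alpha)_*)$, and I would check that $\Gamma_A: A \to C_0(\widehat{A})$ and $\Gamma_B: B \to C(\widehat{B})$ are the Gelfand transforms — isomorphisms of commutative $C^*$-algebras by the Gelfand--Naimark theorem, with $\Gamma_B$ unital since $B$ is — that the square relating $\alpha$ and $(\phi_\alpha)_*$ commutes, and that $\Gamma$ is natural in $(\tau,\pi)$; nondegeneracy of $\tau$ is preserved by $\Gamma$, so $(\Gamma_A,\Gamma_B)$ is a morphism in $\ucom$. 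I expect the main obstacle to be purely bookkeeping: matching the piecewise formula for $\vphi_*$ (and the abstractly-constructed embedding $\phi_\alpha$ of Lemma~\ref{utoc}, which was only asserted to exist) against each other on the dense subsets $\vphi(X)$ and $\alpha(A)$, and then invoking density plus Hausdorffness to conclude equality everywhere — exactly the style of dense-subset argument already used repeatedly in Section~\ref{compactifications}. No single step is deep, but the commuting-square verification for $\Gamma$ requires knowing the explicit description of $\phi_\alpha$, so I would first pin that down (e.g.\ that $\phi_\alpha(\gamma) = \gamma \circ \alpha^{-1}$ on $\alpha(A)$, extended to $B$ using that $\alpha(A)$ is an essential ideal) before running the computation.
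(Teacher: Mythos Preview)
Your proposal is correct and follows essentially the same approach as the paper: verify that each component is an isomorphism, that the object-level square commutes, and that naturality holds, all by direct computation using the classical Gelfand maps. The paper's proof is in fact terser than yours---it simply asserts the relevant diagrams commute ``by straightforward calculations'' without writing them out---so your more explicit plan is entirely adequate; the only slip is that your naturality identity should read $\widehat{\functc f}\circ \ev_X = \ev_Y\circ f$ (not $\ev_Y$ on the left and $\ev_X$ on the right), since $f:X\to Y$ and $\widehat{\functc f}:\widehat{C_0(X)}\to\widehat{C_0(Y)}$.
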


\begin{proof} 

For all objects $(X,K,\vphi)$ in $\com$, $(\ev_X, \ev_K):(X,K,\vphi) \to (\widehat{C_0(X)}, \widehat{C(K)}, \phi_{\vphi_*})$ is an isomorphism in $\com$ because the diagram
\[\begin{tikzcd}
        X \arrow{r}{\vphi} \arrow{d}{\ev_X} & K \arrow{d}{\ev_K} \\
        \widehat{C_0(X)}\arrow{r}{\phi_{\vphi_*}} & \widehat{C(K)}.
    \end{tikzcd}\]
    commutes and each coordinate is a homeomorphism. Moreover, for all morphisms $(f,F): (X,K,\vphi) \to (Y,L,\gamma)$, straighforward calculations show that the diagram 
     \[\begin{tikzcd}[column sep=large]
    (X,K,\vphi) \arrow{r}{(\epsilon_X, \epsilon_K)} \arrow{d}{(f,F)} & (\widehat{C_0(X)}, \widehat{C(K)}, \phi_{\vphi_*}) \arrow{d}{(\widehat{ \functc f}, \widehat{ \functc F})} \\
    (Y, L, \gamma) \arrow{r}{(\epsilon_Y, \epsilon_L)} & (\widehat{C_0(Y)}, \widehat{C(L)}, \phi_{\gamma_*})
    \end{tikzcd}\]
    commutes. So, $\ev:1_\com \to \spec \functc$ is a natural isomorphism. 

    Similarly, for all objects $(A, B,\alpha)$ in $\ucom$, $(\Gamma_A, \Gamma_B)$ is an isomorphism because the diagram 
\[ \begin{tikzcd}
        C_0(\widehat{A}) \arrow{r}{(\phi_\alpha)_*} & C(\widehat{B}) \\
        A \arrow{u}{\Gamma_A} \arrow{r}{\alpha} & B \arrow{u}{\Gamma_B}
    \end{tikzcd}\]
    commutes and each coordinate is an isomorphism. Morevoer, for all morphisms $(\tau, \pi):(A,B,\alpha) \to (C,D,\beta)$, the diagram 
    \[\begin{tikzcd}
        (C,D,\beta) \arrow{r}{(\Gamma_C,\Gamma_D)} & (C_0(\widehat{C}),C(\widehat{D}), \functc \phi_\beta)\\\
        (A,B, \alpha) \arrow{u}{(\tau,\pi)} \arrow{r}{(\Gamma_A, \Gamma_B)} & (C_0(\widehat{A}), C(\widehat{B}), \functc \phi_\alpha) \arrow[swap]{u}{(\functc \hat{ \tau}, \functc \hat{ \pi})}
    \end{tikzcd}\]
 commutes so $\Gamma: 1_{\ucom} \to \functc \spec$ is a natural isomorphism. 
\end{proof}

\begin{cor}\label{comucom}
    $\com$ and $\ucom$ are contravariant equivalent categories. 
\end{cor}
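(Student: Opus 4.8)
The plan is to observe that Corollary~\ref{comucom} is essentially a repackaging of the preceding proposition together with the definition of a contravariant equivalence. Two categories are contravariantly equivalent precisely when there exist contravariant functors $\F\colon \com \to \ucom$ and $\G\colon \ucom \to \com$ with $\G\F \cong 1_\com$ and $\F\G \cong 1_\ucom$. The candidates are already in hand: take $\F = \functc$ and $\G = \spec$, each shown to be a contravariant functor in the two propositions above (using Lemma~\ref{vphistar} to see that $\functc(X,K,\vphi)$ is a legitimate object of $\ucom$, and Lemma~\ref{utoc} for the analogous fact about $\spec$).

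The key step, already executed in the preceding proposition, is to produce the two natural isomorphisms. The first, $\ev\colon 1_\com \to \spec\functc$, has components $\ev_{(X,K,\vphi)} = (\ev_X, \ev_K)$ assembled from the evaluation homeomorphisms; this is the triple-level form of the Gelfand--Naimark statement $X \cong \widehat{C_0(X)}$ for a locally compact Hausdorff space, with the ambient compactification handled by $K \cong \widehat{C(K)}$. The second, $\Gamma\colon 1_\ucom \to \functc\spec$, has components $\Gamma_{(A,B,\alpha)} = (\Gamma_A, \Gamma_B)$ built from the Gelfand transform, realizing $A \cong C_0(\widehat{A})$ and $B \cong C(\widehat{B})$ at the level of triples. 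With these natural isomorphisms established, the corollary is immediate: $\functc$ and $\spec$ are mutually quasi-inverse contravariant functors.

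I anticipate no genuine obstacle; the substantive work has been front-loaded into checking that $\functc$ and $\spec$ respect morphisms and into the naturality squares for $\ev$ and $\Gamma$, all of which reduce to diagram chases using density of $\vphi(X)$ in $K$ and the standard universal properties of the Gelfand correspondence. The only subtlety worth flagging is that "contravariant equivalence" as used here requires nothing beyond the existence of these natural isomorphisms, so no triangle identities need be verified; by general category theory such an equivalence can be promoted to an adjoint equivalence if one wishes, but that is not needed for the statement. Hence the proof of Corollary~\ref{comucom} is essentially the single sentence: by the preceding proposition the composites $\spec\functc$ and $\functc\spec$ are naturally isomorphic to the respective identity functors, so $\com$ and $\ucom$ are contravariantly equivalent via $\functc$ and $\spec$.
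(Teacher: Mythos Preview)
Your proposal is correct and matches the paper's approach exactly: the paper states this as an immediate corollary of the preceding proposition, with no separate proof, since the natural isomorphisms $\ev\colon 1_\com \to \spec\functc$ and $\Gamma\colon 1_\ucom \to \functc\spec$ established there are precisely what is needed to witness a contravariant equivalence. Your write-up is more expansive than the paper's (which offers none), but the content and logic are identical.
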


We immediately apply Lemma~\ref{induced} to $(\functn , \eta)$ and $(\functm,\psi)$ to induce an idempotent comonad and monad on $\ucom$.

First, let $(\functt, \delta)$ denote the idempotent comonad on $\ucom$ induced by $(\functn, \eta)$. As usual, let $\T$ denote the full subcategory whose objects $(A,B,\alpha)$ satisfy that $\delta_{(A,B,\alpha)}$ is an isomorphism. Let $\deltat$ be the functor obtained by regarding $\functt$ as a functor from $\ucom$ to $\T$ so that $\functt = \inc_\T \circ \deltat$. 

If $(A, B, \alpha)$ is an object in $\ucom$ and $(\widehat{A}, \widehat{A}^*, \iota)$ denotes the one-point compactification triple associated to $\widehat{A}$, then $$T(A, B, \alpha) = (C_0(\widehat{A}), C(\widehat{A}^*), \iota_*).$$ $(C(\widehat{A}^*), \iota_*)$ is a minimal unitization of $C_0(\widehat{A})$ since it has the property given in Definition~\ref{minunitization}, so $T(A,B,\alpha)$ is isomorphic to the minimal unitization triple associated to $C_0(\widehat{A})$. 

Conversely, if $(A, \tild{A}, \kappa)$ is a minimal unitization triple, then $(\widehat{\tild{A}}, \phi_\kappa)$ is a one-point compactification of $\widehat{A}$ as it has the property given in Definition~\ref{onestar}. Thus, $\iota_{\widehat{\tild{A}}}:\hat{\tild{A}} \to \hat{A}^*$ is a homeomorphism which implies
$$\delta_{(A, \tild{A}, \kappa)} = (\Gamma_A^{-1}, \Gamma_{\tild{A}}^{-1} \circ \functc \iota_{\widehat{\tild{A}}})$$
is an isomorphism. So, $(A, \tild{A}, \kappa)$ is an object in $ \T$. Therefore, an object in $\ucom$ is an object in $\T$ if and only if it is isomorphic to a minimal unitization triple. 

We summarize our findings below.

\begin{cor}
    The idempotent monad $(\functn, \eta)$ on $\com$ induces an idempotent comonad $(\functt, \delta)$ on $\ucom$. With $\T$ and $\deltat$ as given above, we have that $\inc_\T$ is right adjointable, $\deltat$ is a right adjoint of $\inc_\T$, and $\delta$ is the counit of the adjunction $\inc_\T \dashv \deltat$. 

    Moreover, an object $(A, B, \alpha)$ is in $\T$ if and only if $(A, B, \alpha)$ is isomorphic to a minimal unitization triple. 
\end{cor}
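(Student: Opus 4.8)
The plan is to assemble ingredients already established. By Corollary~\ref{comucom}, the categories $\com$ and $\ucom$ are contravariantly equivalent, the equivalence being implemented by the contravariant functors $\functc\colon\com\to\ucom$ and $\spec\colon\ucom\to\com$ together with the natural isomorphism $\Gamma\colon 1_\ucom\to\functc\spec$; and by Theorem~\ref{comnthm}(i), $(\functn,\eta)$ is an idempotent monad on $\com$. I would then apply Lemma~\ref{induced} with $\C=\com$, $\D=\ucom$, $\F=\functc$, $\G=\spec$, and $\theta=\Gamma$. That lemma gives at once that $\functt=\functc\functn\spec$, together with $\delta$ defined by $\delta_{(A,B,\alpha)}=\Gamma_{(A,B,\alpha)}^{-1}\circ\functc\eta_{\spec(A,B,\alpha)}$, is an idempotent comonad on $\ucom$; this is exactly the $\delta$ recorded above the statement.

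Once $(\functt,\delta)$ is known to be an idempotent comonad on $\ucom$, the subcategory $\T$ and the corestricted functor $\deltat$ are precisely the data to which Lemma~\ref{comonad} applies, in the roles of $\M$ and $\mpsi$. Invoking Lemma~\ref{comonad} then delivers the first half of the statement: $\inc_\T$ is right adjointable, $\deltat$ is a right adjoint of $\inc_\T$, and $\delta$ is the counit of $\inc_\T\dashv\deltat$.

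For the characterization of $\T$, I would argue both inclusions, starting from the computation $\functt(A,B,\alpha)=\functc\functn\spec(A,B,\alpha)=(C_0(\widehat A),C(\widehat A^*),\iota_*)$, where $(\widehat A^*,\iota)$ is the one-point compactification of $\widehat A$. Since $(C(\widehat A^*),\iota_*)$ enjoys the universal property of Definition~\ref{minunitization}, $\functt(A,B,\alpha)$ is isomorphic to the minimal unitization triple of $C_0(\widehat A)$; so if $(A,B,\alpha)\in\T$, then $\delta_{(A,B,\alpha)}\colon\functt(A,B,\alpha)\to(A,B,\alpha)$ is an isomorphism and $(A,B,\alpha)$ is isomorphic to a minimal unitization triple. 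Conversely, given a minimal unitization triple $(A,\tild A,\kappa)$, I would translate the universal property of $\tild A$ through Gelfand duality to see that $(\widehat{\tild A},\phi_\kappa)$ has the universal property of the one-point compactification of $\widehat A$ in Definition~\ref{onestar}; hence $\iota_{\widehat{\tild A}}\colon\widehat{\tild A}\to\widehat A^*$ is a homeomorphism, which forces $\delta_{(A,\tild A,\kappa)}=(\Gamma_A^{-1},\Gamma_{\tild A}^{-1}\circ\functc\iota_{\widehat{\tild A}})$ to be an isomorphism, so $(A,\tild A,\kappa)\in\T$. Because $\T$ is full and, by naturality of $\delta$, closed under isomorphism, this completes the equivalence.

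I do not expect any deep obstacle; the argument is mostly bookkeeping over previously proved results. The one step that warrants genuine care is the converse direction of the characterization: one must verify that the universal property of the minimal unitization of $\tild A$ really does dualize to that of the one-point compactification of $\widehat A$, keeping straight the direction of the arrows and the dictionary that matches nondegenerate (resp. unital) homomorphisms with proper (resp. arbitrary) continuous maps and, crucially, injective unital homomorphisms $C(\widehat A^*)\to C(K)$ with continuous surjections $K\to\widehat A^*$. Once that dictionary is made explicit, the claim is immediate from Gelfand duality.
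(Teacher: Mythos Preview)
Your proposal is correct and follows essentially the same route as the paper: the paper's argument, given in the discussion immediately preceding the corollary, likewise invokes Lemma~\ref{induced} via Corollary~\ref{comucom} to obtain the idempotent comonad, then (implicitly through Lemma~\ref{comonad}) the adjunction, and characterizes $\T$ by computing $\functt(A,B,\alpha)=(C_0(\widehat A),C(\widehat A^*),\iota_*)$ and dualizing the universal properties exactly as you describe. Your explicit mention of closure of $\T$ under isomorphism and your caution about verifying that the minimal-unitization universal property dualizes to the one-point-compactification property are points the paper glosses over, but the overall structure is the same.
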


Now, let $(\functs, \epsilon)$ denote the idempotent monad on $\ucom$ induced by $(\functm, \psi)$ . Let $\mc{S}$ denote the full subcategory whose objects satisfy that $\epsilon_{(A, B, \alpha)}$ is an isomorphism. Let $\epsilons$ be the functor obtained by regarding $\functs$ as a functor from $\ucom$ to $\mc{S}$ so that $\functs = \inc_{\mc{S}} \circ \epsilons$. 

If $(A, B, \alpha)$ is an object in $\ucom$ and $(\widehat{A}, \beta \widehat{A}, \rho)$ denotes the Stone--\v{C}ech compactification triple associated to $\widehat{A}$, then 
$$\functs(A, B, \alpha) = (C_0(\widehat{A}), C(\beta\widehat{A}),\rho_*).$$
 $(C(\beta \widehat{A}), \rho_*)$ is a maximal unitization of $C_0(\widehat{A})$ since it has the property given in Definition~\ref{multiplieralgebra}. So, $\functs(A, B, \alpha)$ is isomorphic to the maximal unitization triple associated to $C_0(\widehat{A})$. 

Conversely, if $(A, M(A), \sigma)$ is a maximal unitization triple, then $(\widehat{M(A)}, \phi_\sigma)$ is a compactification of $\widehat{A}$ with the property given in Definition~\ref{betax} and thus $\tild{\phi_\sigma}:\beta \widehat{A} \to \widehat{M(A)}$ is a homeomorphism which implies 
$$\epsilon_{(A,M(A),\sigma)} = (\Gamma_A, \functc{\tild{\phi_\sigma}} \circ  \Gamma_{M(A)})$$
is an isomorphism so $(A, M(A), \sigma)$  is an object in $\mc{S}$. Thus, an object in $\ucom$ is in $\mc{S}$ if and only if it is isomorphic to a maximal unitization triple. 

\begin{cor}
    The idempotent comonad $(\functm, \psi)$ on $\ucom$ induces an idempotent monad $(\functs, \epsilon)$ on $\com$. With $\mc{S}$ and $\epsilons$ given as above, we have that $\inc_\mc{S}$ is left adjointable, $\epsilons$ is a left adjoint of $\inc_\mc{S}$, and $\epsilon$ is the unit of the adjunction $\epsilons \dashv \inc_\mc{S}$. 

    Moreover, $(A, B, \alpha)$ is an object in $\mc{S}$ if and only if $(A, B, \alpha)$ is isomorphic to a maximal unitization triple. 
\end{cor}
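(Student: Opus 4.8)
The plan is to obtain both assertions from the preliminary machinery, with the real work concentrated in a single Gelfand-duality translation. For the adjunction statement I would argue as follows. By Corollary~\ref{comucom} the categories $\com$ and $\ucom$ are contravariantly equivalent via $\functc:\com \to \ucom$ and $\spec:\ucom \to \com$, with natural isomorphism $\Gamma:1_\ucom \to \functc\spec$; and by Theorem~\ref{commthm} the pair $(\functm,\psi)$ is an idempotent comonad on $\com$. Applying Lemma~\ref{induced} with $\C=\com$, $\D=\ucom$, $\F=\functc$, $\G=\spec$, and $\theta=\Gamma$, we conclude that $(\functs,\epsilon)=(\functc\functm\spec,\epsilon)$ is an idempotent monad on $\ucom$. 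Feeding this idempotent monad, together with the full subcategory $\mc{S}$ and the corestriction $\epsilons:\ucom\to\mc{S}$, into Lemma~\ref{monad} then yields at once that $\inc_\mc{S}$ is left adjointable, that $\epsilons$ is a left adjoint of $\inc_\mc{S}$, and that $\epsilon$ is the unit of the adjunction $\epsilons \dashv \inc_\mc{S}$; in particular $\mc{S}$ is a reflective subcategory of $\ucom$.

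For the characterization of the objects of $\mc{S}$, I would first note that $\mc{S}$ is closed under isomorphism: if $(A,B,\alpha)\cong(A',B',\alpha')$ in $\ucom$ and $\epsilon_{(A,B,\alpha)}$ is an isomorphism, then naturality of $\epsilon$ forces $\epsilon_{(A',B',\alpha')}$ to be an isomorphism as well. Hence it suffices to show that $\functs$ sends each object to a maximal unitization triple and that every maximal unitization triple lies in $\mc{S}$. For the first point: if $(A,B,\alpha)\in\mc{S}$, then $\epsilon_{(A,B,\alpha)}$ identifies $(A,B,\alpha)$ with $\functs(A,B,\alpha)=(C_0(\widehat A),C(\beta\widehat A),\rho_*)$, and using the universal property of the Stone--\v{C}ech compactification (Definition~\ref{betax}) one checks that $(C(\beta\widehat A),\rho_*)$ satisfies the universal property of Definition~\ref{multiplieralgebra}, hence is a maximal unitization of $C_0(\widehat A)$; so $(A,B,\alpha)$ is isomorphic to a maximal unitization triple. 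For the second point: given a maximal unitization triple $(A',M(A'),\sigma)$, I would use Gelfand duality to show that $(\widehat{M(A')},\phi_\sigma)$ is a compactification of $\widehat{A'}$ possessing the property of Definition~\ref{betax}, so that the canonical map $\tild{\phi_\sigma}:\beta\widehat{A'}\to\widehat{M(A')}$ is a homeomorphism; this makes $\epsilon_{(A',M(A'),\sigma)}=(\Gamma_{A'},\functc\tild{\phi_\sigma}\circ\Gamma_{M(A')})$ an isomorphism in each coordinate, so $(A',M(A'),\sigma)\in\mc{S}$.

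The main obstacle is the dictionary step appearing in both points above: verifying that, for a commutative $C^*$-algebra $A'$, the maximal unitization triple of $A'$ corresponds under Gelfand duality to the Stone--\v{C}ech compactification triple of $\widehat{A'}$ — equivalently, that $M(C_0(X))\cong C(\beta X)$ as unitizations of $C_0(X)$. Concretely one must match a continuous map $\widehat{A'}\to K$ into a compact Hausdorff space $K$ with a homomorphism $C(K)\to C_b(\widehat{A'})=M(C_0(\widehat{A'}))$, and check that the unique unital extension over the multiplier algebra corresponds, after passing to spectra, to the unique continuous extension over $\widehat{M(A')}$, uniqueness coming from density of $\phi_\sigma(\widehat{A'})$. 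Once this correspondence is in hand, the homeomorphism $\tild{\phi_\sigma}$ and hence membership in $\mc{S}$ follow, and the remaining content of the corollary is bookkeeping with the functors and natural isomorphisms already established.
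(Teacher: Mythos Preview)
Your proposal is correct and follows essentially the same route as the paper: apply Lemma~\ref{induced} to transport $(\functm,\psi)$ from $\com$ to the idempotent monad $(\functs,\epsilon)$ on $\ucom$, invoke Lemma~\ref{monad} for the adjunction, and for the characterization use the Gelfand-duality dictionary that $(C(\beta\widehat A),\rho_*)$ is a maximal unitization of $C_0(\widehat A)$ and, conversely, that $(\widehat{M(A)},\phi_\sigma)$ is a Stone--\v{C}ech compactification of $\widehat A$, yielding the explicit isomorphism $\epsilon_{(A,M(A),\sigma)}=(\Gamma_A,\functc\tild{\phi_\sigma}\circ\Gamma_{M(A)})$. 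Your additional remark on closure of $\mc{S}$ under isomorphism and your elaboration of the $M(C_0(X))\cong C(\beta X)$ verification are minor expansions of steps the paper leaves implicit.
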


Finally, by Lemma~\ref{induced}, we conclude that since $(N, \eta)$ and $(M,\psi)$ satisfy the maximal-normal equivalence, $(T, \delta)$ and $(S, \epsilon)$ do too. 

\begin{cor}
    The categories $\T$ and $\mc{S}$ are equivalent. The adjunction $\epsilons \circ \inc_{\T} \dashv \deltat \circ \inc_{\mc{S}}$ is an adjoint equivalence between $\T$ and $\mc{S}$. Moreover, we have
    $$\epsilons \cong (\epsilons \circ \inc_{\T}) \circ \deltat \text{ and } \deltat \cong (\deltat \circ \inc_{\mc{S}}) \circ \epsilons.$$
\end{cor}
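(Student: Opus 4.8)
The plan is to obtain this corollary as a direct application of Lemma~\ref{maxnor} to the idempotent monad $(\functs,\epsilon)$ and idempotent comonad $(\functt,\delta)$ on $\ucom$. All of the structural input that Lemma~\ref{maxnor} requires has already been assembled: by Lemma~\ref{induced} (together with Corollary~\ref{comucom}) $(\functs,\epsilon)$ is an idempotent monad on $\ucom$ and $(\functt,\delta)$ is an idempotent comonad on $\ucom$, and by the two preceding corollaries $\mc{S}$ with reflector $\epsilons$ is the reflective subcategory attached to $(\functs,\epsilon)$ and $\T$ with coreflector $\deltat$ is the coreflective subcategory attached to $(\functt,\delta)$, exactly in the form demanded by the preliminaries. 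So the only thing left to verify is the extra hypothesis of Lemma~\ref{maxnor} with $\mc{S},\epsilons,\functs,\epsilon$ in the roles of $\N,\neta,\functn,\eta$ and $\T,\deltat,\functt,\delta$ in the roles of $\M,\mpsi,\functm,\psi$ --- namely that $\functs\delta\colon\functs\functt\to\functs$ and $\functt\epsilon\colon\functt\functs\to\functt$ are natural isomorphisms.

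First I would recall from Lemma~\ref{induced} that $(\functt,\delta)$ is precisely the comonad induced on $\ucom$ from the monad $(\functn,\eta)$ on $\com$ via the contravariant equivalence given by $\functc$ and $\spec$ (with natural isomorphism $\Gamma\colon 1_\ucom\to\functc\spec$), and that $(\functs,\epsilon)$ is the monad induced from the comonad $(\functm,\psi)$. Then I would quote the last paragraph of Lemma~\ref{induced} twice: because $\functn\psi$ is a natural isomorphism, $\functt\epsilon$ is a natural isomorphism, and because $\functm\eta$ is a natural isomorphism, $\functs\delta$ is a natural isomorphism. That both $\functn\psi$ and $\functm\eta$ are natural isomorphisms is exactly Proposition~\ref{MetaNpsi}. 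Hence the hypothesis of Lemma~\ref{maxnor} holds for $(\functs,\epsilon)$ and $(\functt,\delta)$.

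With that in hand, Lemma~\ref{maxnor} applies verbatim: part (i) gives that $\epsilons\circ\inc_\T\dashv\deltat\circ\inc_{\mc{S}}$ is an adjoint equivalence between $\T$ and $\mc{S}$, so in particular these categories are equivalent; part (ii) gives $\epsilons\cong(\epsilons\circ\inc_\T)\circ\deltat$; and part (iii) gives $\deltat\cong(\deltat\circ\inc_{\mc{S}})\circ\epsilons$. That is the full statement of the corollary.

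I do not expect a genuine obstacle --- the content is entirely in Lemmas~\ref{induced} and~\ref{maxnor} and in Proposition~\ref{MetaNpsi} --- but the one point that needs care is keeping the contravariance straight. Because the equivalence between $\com$ and $\ucom$ reverses arrows, the monad $(\functn,\eta)$ on $\com$ is transported to the \emph{comonad} $(\functt,\delta)$ on $\ucom$ and the comonad $(\functm,\psi)$ is transported to the \emph{monad} $(\functs,\epsilon)$; consequently the ``$\functn\psi$'' hypothesis on the $\com$ side corresponds to the ``$\functt\epsilon$'' hypothesis on the $\ucom$ side (not to ``$\functs\delta$''), and dually. This bookkeeping is precisely what the final paragraph of Lemma~\ref{induced} encodes, so once the correspondence is written out the proof is a formal citation.
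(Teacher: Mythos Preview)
Your proposal is correct and follows essentially the same route as the paper: the paper simply remarks, immediately before stating the corollary, that by Lemma~\ref{induced} the maximal-normal equivalence for $(\functn,\eta)$ and $(\functm,\psi)$ transfers to $(\functt,\delta)$ and $(\functs,\epsilon)$, and then states the corollary without further proof. Your write-up just unpacks this one-line reduction, citing Proposition~\ref{MetaNpsi} for the isomorphism hypotheses and then invoking Lemma~\ref{maxnor}.
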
 

\section{Unitizations of $C^*$-Algebras}\label{unitizations} Instead of inducing idempotent monads and comonads on $\ucom$, we could have developed them from scratch much in the same way we developed the idempotent monads and comonads on $\com$. We do exactly that in order to generalize to the noncommutative setting. Definitions~\ref{minunitization}--\ref{mtau} make no reference to whether the $C^*$-algebras are commutative, so they are all applicable in this section. 

\begin{lemma}
    There is a category $\U$ for that objects are triples $(A, B,\alpha)$ where $A$ is a non-unital $C^*$-algebra and $(B,\alpha)$ is a unitization of $A$. A morphism is a pair $(\tau, \pi):(A, B, \alpha) \to (C,D,\beta)$ where $\tau:A \to C$ is a nondegenerate homomorphism, $\pi:B \to D$ is a unital homomorphism, and the diagram
    \[\begin{tikzcd}
        C \arrow{r}{\beta} & D\\
        A \arrow{u}{\tau} \arrow{r}{\alpha} & B \arrow{u}{\pi}
    \end{tikzcd}\]
    commutes. 
\end{lemma}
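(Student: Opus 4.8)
The plan is to imitate the proof of Lemma~\ref{lemucom} essentially word for word, the point being that commutativity of the $C^*$-algebras was never used there. I would declare the identity morphism on an object $(A,B,\alpha)$ to be $(\id_A,\id_B)$, and define the composite of morphisms $(\tau,\pi)\colon(A,B,\alpha)\to(C,D,\beta)$ and $(\vphi,\phi)\colon(C,D,\beta)\to(E,F,\gamma)$ componentwise, as $(\vphi\circ\tau,\phi\circ\pi)$. Associativity and the unit laws are then inherited from ordinary composition of functions, so the only genuine content is the verification that identities and composites are again morphisms of $\U$.

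For the identity: $\id_A$ is nondegenerate because $A$ possesses an approximate identity, $\id_B$ is unital, and the defining square commutes trivially. For composites there are three things to check: that a composite of unital homomorphisms is unital (immediate); that pasting the commuting square for $(\tau,\pi)$ on top of the one for $(\vphi,\phi)$ produces the commuting square required for $(\vphi\circ\tau,\phi\circ\pi)$ (the same diagram chase displayed in the proof of Lemma~\ref{lemucom}); and that a composite of nondegenerate homomorphisms is nondegenerate. Only the last requires a computation: writing $\ov{S}$ for closed linear span, if $\ov{\tau(A)C}=C$ and $\ov{\vphi(C)E}=E$, then
\[
E=\ov{\vphi(C)E}=\ov{\vphi(\ov{\tau(A)C})E}\subseteq\ov{\vphi(\tau(A)C)E}=\ov{\vphi(\tau(A))\vphi(C)E}\subseteq\ov{\vphi(\tau(A))E}\subseteq E,
\]
so $\vphi\circ\tau$ is nondegenerate. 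None of these three steps uses commutativity, nor anything about the middle objects beyond what the morphism axioms already impose.

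I do not anticipate a real obstacle here; the statement is pure bookkeeping, asserting that the morphism conditions are stable under the categorical operations. The only place where a little care is warranted is ensuring that the notion of \emph{nondegenerate homomorphism} $A\to C$ in play (that is, $\ov{\tau(A)C}=C$) is precisely the one used earlier, so that the displayed chain applies and so that $\U$ is the category on which the maximal- and minimal-unitization constructions of the remainder of Section~\ref{unitizations} will act; but this is a matter of keeping the definitions consistent rather than a mathematical difficulty.
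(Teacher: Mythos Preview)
Your proposal is correct and matches the paper's approach exactly: the paper's proof consists of the single sentence ``This proof is identical to the proof in the commutative case, Lemma~\ref{lemucom},'' and you have spelled out that identical argument, even supplying the extra verification that compositions of nondegenerate homomorphisms are nondegenerate (which the paper simply asserted without computation in the proof of Lemma~\ref{lemucom}).
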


\begin{proof}
    This proof is identical to the proof in the commutative case, Lemma~\ref{lemucom}. 
\end{proof}

Notice that $\U$ contains $\ucom$ as a full subcategory. 

  For a fixed non-unital $C^*$-algebra, $A$, let $\U_A$ denote the fiber of $\U$ over $A$ where objects have first coordinate $A$ and morphisms have first coordinate $\id_A$. We have that $(A, \tild{A}, \iota)$ is an initial object in $\U_A$ and $(A, M(A), \rho)$ is a final object in $\U_A$.  

\begin{prop}
    There is a functor $\functm: \U \to \U$ given on objects by setting $\functm(A,B,\alpha)$ to be an initial object in $\U_A$. That is, 
    $$\functm(A,B,\alpha) = (A, \tild{A}, \iota).$$
    $\functm$ is given on morphisms $(\tau, \pi):(A, B, \alpha) \to (C, D, \beta)$ by 
    $$\functm(\tau, \pi)= (\tau, \tild{\kappa \circ \tau}): (A, \tild{A}, \iota) \to (C, \tild{C}, \kappa).$$
\end{prop}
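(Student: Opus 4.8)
The plan is to verify that the proposed assignment $\functm$ is actually a functor $\U \to \U$, following the same template as the one-point compactification functor on $\com$. First I would check that $\functm$ is well-defined on objects: for a fixed non-unital $C^*$-algebra $A$, the minimal unitization $(\tild A, \iota)$ is an initial object in the fiber $\U_A$ by Definition~\ref{minunitization} (the universal property gives, for each object $(A,B,\alpha)$ in $\U_A$, the unique morphism $(\id_A, \tild\alpha)$ from $(A,\tild A,\iota)$ to $(A,B,\alpha)$), and since initial objects are unique up to canonical isomorphism this is a legitimate choice. So $\functm(A,B,\alpha) = (A,\tild A,\iota)$ depends only on $A$.

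Next I would verify $\functm$ is well-defined on morphisms. Given $(\tau,\pi)\colon (A,B,\alpha)\to(C,D,\beta)$, I need that $(\tau, \tild{\kappa\circ\tau})$ is a genuine morphism in $\U$ from $(A,\tild A,\iota)$ to $(C,\tild C,\kappa)$, where $\kappa\colon C\to\tild C$ is the canonical embedding into the minimal unitization of $C$. Here $\tau$ is nondegenerate by hypothesis, so $\kappa\circ\tau\colon A\to\tild C$ is a homomorphism into a unital $C^*$-algebra, and Definition~\ref{minunitization} produces the unique unital homomorphism $\tild{\kappa\circ\tau}\colon\tild A\to\tild C$ with $\tild{\kappa\circ\tau}\circ\iota = \kappa\circ\tau$. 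That last equation is precisely the statement that the square
\[\begin{tikzcd}
    C \arrow{r}{\kappa} & \tild C\\
    A \arrow{u}{\tau} \arrow{r}{\iota} & \tild A \arrow{u}{\tild{\kappa\circ\tau}}
\end{tikzcd}\]
commutes, so $(\tau,\tild{\kappa\circ\tau})$ is a morphism in $\U$. (One should also note $\tau$ nondegenerate guarantees this is again a morphism with nondegenerate first coordinate, and $\tild{\kappa\circ\tau}$ is unital by construction.)

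Finally I would check functoriality: that $\functm$ preserves identities and composition. For identities, $\functm(\id_A,\id_B) = (\id_A, \tild{\iota\circ\id_A}) = (\id_A,\tild\iota)$, and $\tild\iota\colon\tild A\to\tild A$ is the unique unital homomorphism with $\tild\iota\circ\iota=\iota$, hence $\tild\iota=\id_{\tild A}$ — so $\functm$ fixes identity morphisms. For composition, given $(\tau,\pi)\colon(A,B,\alpha)\to(C,D,\beta)$ and $(\vphi,\phi)\colon(C,D,\beta)\to(E,F,\gamma)$ with $\lambda\colon E\to\tild E$ the canonical embedding, I compare $\functm((\vphi,\phi)\circ(\tau,\pi)) = (\vphi\circ\tau, \tild{\lambda\circ\vphi\circ\tau})$ against $\functm(\vphi,\phi)\circ\functm(\tau,\pi) = (\vphi\circ\tau, \tild{\lambda\circ\vphi}\circ\tild{\kappa\circ\tau})$. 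The first coordinates agree trivially; for the second, both $\tild{\lambda\circ\vphi\circ\tau}$ and $\tild{\lambda\circ\vphi}\circ\tild{\kappa\circ\tau}$ are unital homomorphisms $\tild A\to\tild E$, and precomposing either with $\iota$ yields $\lambda\circ\vphi\circ\tau$ (using $\tild{\kappa\circ\tau}\circ\iota=\kappa\circ\tau$ and then $\tild{\lambda\circ\vphi}\circ\kappa=\lambda\circ\vphi$), so by the uniqueness clause in Definition~\ref{minunitization} they coincide. The main obstacle, such as it is, is bookkeeping: one must be careful that the "tilde" extension maps are invoked with the correct target unitization and that every appeal to uniqueness in Definition~\ref{minunitization} is against the right universal diagram; no deep argument is needed beyond the universal property, exactly as in Proposition~\ref{minimizer} and its companions for $\com$.
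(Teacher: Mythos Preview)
Your proposal is correct and follows exactly the approach the paper has in mind: the paper's own proof is simply ``This is clear,'' and what you have written is precisely the routine verification (well-definedness via Definition~\ref{minunitization}, preservation of identities and composition by uniqueness of the tilde extension) that makes it clear. There is nothing to add or correct.
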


\begin{proof}
    This is clear. 
\end{proof}

\begin{prop}
    The assignment $\psi_{(A,B,\alpha)} = (\id_A, \tild{\alpha})$ gives a natural transformation $\psi: \functm \to 1_{\U}$. 
\end{prop}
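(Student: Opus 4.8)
The plan is to follow the same two-step pattern used in the commutative case (the proposition establishing $\psi : \functm \to 1_{\com}$ via $\psi_{(X,K,\vphi)} = (\id_X, \tild{\vphi})$), with the single substantive change that the density argument used there is replaced by the uniqueness clause in Definition~\ref{minunitization}. First I would verify that $\psi_{(A,B,\alpha)} = (\id_A, \tild{\alpha})$ is a legitimate morphism $(A,\tild{A},\iota) \to (A,B,\alpha)$ in $\U$. The first coordinate $\id_A$ is trivially a nondegenerate homomorphism. The second coordinate $\tild{\alpha}\colon \tild{A} \to B$ is a unital homomorphism by Definition~\ref{minunitization}, applied to the homomorphism $\alpha\colon A \to B$ into the unital algebra $B$; and that same definition gives $\tild{\alpha}\circ\iota = \alpha = \alpha \circ \id_A$, which is exactly the commuting-square condition for a morphism in $\U$.

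Next comes naturality. For every morphism $(\tau,\pi)\colon (A,B,\alpha)\to(C,D,\beta)$ I must show that the square with top edge $(\id_A,\tild{\alpha})$, bottom edge $(\id_C,\tild{\beta})$, left edge $\functm(\tau,\pi) = (\tau, \tild{\kappa\circ\tau})$, and right edge $(\tau,\pi)$ commutes, where $(C,\tild{C},\kappa)$ is the minimal unitization triple of $C$. As always, commutativity is checked coordinatewise. The first coordinate is immediate since $\id_C\circ\tau = \tau\circ\id_A$. The second coordinate requires $\pi\circ\tild{\alpha} = \tild{\beta}\circ\tild{\kappa\circ\tau}$ as unital homomorphisms $\tild{A}\to D$. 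This is the point where the argument diverges from the commutative version: rather than comparing continuous functions on a dense set, I would precompose both sides with $\iota\colon A \to \tild{A}$ and invoke the uniqueness half of Definition~\ref{minunitization}. Using $\tild{\alpha}\circ\iota = \alpha$, then $\tild{\kappa\circ\tau}\circ\iota = \kappa\circ\tau$, then $\tild{\beta}\circ\kappa = \beta$, and finally the commuting square of the morphism $(\tau,\pi)$ (namely $\beta\circ\tau = \pi\circ\alpha$), one computes $(\pi\circ\tild{\alpha})\circ\iota = \pi\circ\alpha = \beta\circ\tau = (\tild{\beta}\circ\tild{\kappa\circ\tau})\circ\iota$. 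Since $\pi\circ\tild{\alpha}$ and $\tild{\beta}\circ\tild{\kappa\circ\tau}$ are both unital homomorphisms $\tild{A}\to D$ restricting along $\iota$ to the same homomorphism $\pi\circ\alpha\colon A \to D$, the uniqueness clause of Definition~\ref{minunitization} forces them to coincide.

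The main obstacle, and really the only point requiring care, is this last equality of homomorphisms. In the compactification setting one leaned on density of $\vphi(X)$ in $K$, but $\iota(A)$ is a codimension-one closed subspace of $\tild{A}$ and is not dense, so the naive topological argument is unavailable. The remedy is precisely to route everything through the universal property; equivalently, one may note that $\tild{A}$ is generated as a $C^*$-algebra by $\iota(A)$ together with the unit, so two unital homomorphisms out of $\tild{A}$ that agree on $\iota(A)$ already agree on all of $\tild{A}$. Everything else is bookkeeping essentially identical to the commutative proposition, and no commutativity hypothesis enters at any stage, so the proof transfers verbatim.
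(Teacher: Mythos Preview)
Your proof is correct and follows essentially the same approach as the paper's: both verify the naturality square by checking that the second coordinates agree on $\iota(A)$ via the same chain of commuting-square identities. The only cosmetic difference is in the extension step---you invoke the uniqueness clause of Definition~\ref{minunitization}, whereas the paper phrases the same conclusion as ``two homomorphisms nondegenerate on and agreeing on the essential ideal $\iota(A)$ must agree on all of $\tild{A}$.''
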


\begin{proof}
    We show that for all morphisms, $(\tau, \pi): (A, B, \alpha) \to (C, D, \beta)$, the diagram
 \[\begin{tikzcd}[column sep=large]
        (C, \tild{C}, \kappa) \arrow{r}{(\id_C, \tild{\beta})} & (C, D, \beta) \\
        (A, \tild{A}, \iota) \arrow{u}{(\tau, \tild{\kappa \circ \tau})}\arrow{r}{(\id_A, \tild{\alpha})} & (A, B, \alpha) \arrow{u}{(\tau, \pi)}
    \end{tikzcd}\]
    commutes

    This is clear in the first coordinate. For the second, let $\iota(a) \in \tild{A}$. Then we have: 
    \begin{align*}
        \tild{\beta} \circ \tild{\kappa \circ \tau} (\iota(a)) & = \tild{\beta} \circ \kappa \circ \tau(a) & \text{by the commuting square for }(\tau, \tild{\kappa \circ \tau}) \\
        & = \beta \circ \tau (a) & \text{by the commuting square for }(\id_C, \tild{\beta}) \\
        & = \pi \circ \alpha(a) & \text{by the commuting square for $(\tau, \pi)$}\\
        & = \pi \circ \tild{\alpha}(\iota(a)) & \text{by the commuting square for $(\id_A, \tild{\alpha})$}.
    \end{align*}

    $\tild{\beta} \circ \tild{\kappa \circ \tau}$ and $\pi \circ \tild{\alpha}$ are homomorphisms that are nondegenerate on $\iota(A)$ and agree on $\iota(A)$, an essential ideal of $\tild{A}$, therefore must agree on all of $\tild{A}$. 
\end{proof}

\begin{prop}
    $\functm \psi: \functm^2 \to \functm$ and $\psi \functm:\functm^2 \to \functm$ are natural isomorphisms. 
\end{prop}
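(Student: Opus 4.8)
The plan is to mirror, almost verbatim, the argument used in the compactification setting to show that $\functn\eta$, $\eta\functn$, $\functm\psi$, and $\psi\functm$ are natural isomorphisms. The key structural fact driving everything is that $\functm$ is idempotent on the nose in the sense that $\functm^2 = \functm$ as functors. To see this, first I would check it on objects: for any $(A,B,\alpha)$ in $\U$, we have $\functm(A,B,\alpha) = (A,\tild{A},\iota)$, and since $(A,\tild{A},\iota)$ already lies in the fiber $\U_A$ with $\tild{A}$ the minimal unitization, applying $\functm$ again returns the same initial object, so $\functm^2(A,B,\alpha) = (A,\tild{A},\iota) = \functm(A,B,\alpha)$. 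On morphisms $(\tau,\pi)\colon(A,B,\alpha)\to(C,D,\beta)$, we have $\functm(\tau,\pi) = (\tau,\tild{\kappa\circ\tau})$ with target $(C,\tild{C},\kappa)$, and applying $\functm$ once more gives $\functm(\tau,\tild{\kappa\circ\tau}) = (\tau, \tild{\kappa\circ\tau}) = \functm(\tau,\pi)$ — the second coordinate is unchanged because the universal map out of $\tild{A}$ extending $\kappa\circ\tau$ is exactly $\tild{\kappa\circ\tau}$ again. Thus $\functm^2 = \functm$.

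Granting $\functm^2 = \functm$, the two natural transformations $\functm\psi, \psi\functm\colon \functm^2 \to \functm$ are both natural transformations $\functm \to \functm$, and I claim each is the identity transformation, hence trivially a natural isomorphism. For $\functm\psi$: at an object $(A,B,\alpha)$ we compute $\functm\psi_{(A,B,\alpha)} = \functm(\id_A,\tild{\alpha}) = (\id_A, \tild{\iota\circ\id_A}) = (\id_A, \tild{\iota})$, and $\tild{\iota}\colon\tild{A}\to\tild{A}$ is the unique unital homomorphism with $\tild{\iota}\circ\iota = \iota$, which forces $\tild{\iota} = \id_{\tild{A}}$ by the uniqueness clause in Definition~\ref{minunitization}; hence $\functm\psi_{(A,B,\alpha)} = (\id_A,\id_{\tild{A}})$, the identity. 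For $\psi\functm$: at $(A,B,\alpha)$ we get $\psi\functm_{(A,B,\alpha)} = \psi_{(A,\tild{A},\iota)} = (\id_A,\tild{\iota}) = (\id_A,\id_{\tild{A}})$ by the same computation. So both composites equal the identity natural transformation on $\functm$, and in particular are natural isomorphisms.

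I do not anticipate a serious obstacle here; the only point requiring care is the justification that $\functm^2 = \functm$ holds \emph{strictly} rather than merely up to isomorphism — this rests on the particular choice of $\functm(A,B,\alpha)$ as a fixed initial object of the fiber together with the observation that $(A,\tild{A},\iota)$ is itself such an initial object, so the functor genuinely fixes it. Once that is in hand, the remaining content is the two one-line uniqueness arguments identifying $\tild{\iota}$ with $\id_{\tild{A}}$, exactly as in the compactification case where $\iota_{X^*} = \id_{X^*}$ and $\beta\id_X = \id_{\beta X}$.
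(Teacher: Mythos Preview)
Your proposal is correct and follows essentially the same approach as the paper: you first establish the strict equality $\functm^2 = \functm$ on both objects and morphisms, then compute $\functm\psi_{(A,B,\alpha)} = (\id_A,\tild{\iota\circ\id_A}) = (\id_A,\id_{\tild{A}})$ and $\psi\functm_{(A,B,\alpha)} = (\id_A,\tild{\iota}) = (\id_A,\id_{\tild{A}})$ using the uniqueness clause in Definition~\ref{minunitization}, exactly as the paper does. Your remark about strictness of $\functm^2=\functm$ and the parallel with the compactification computations $\iota_{X^*}=\id_{X^*}$, $\beta\id_X=\id_{\beta X}$ is also apt.
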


\begin{proof}
    First, we show $\functm^2 = \functm$. For an object $(A, B, \alpha)$ we have 
    $$\functm^2(A, B, \alpha) = \functm(A, \tild{A}, \iota) = (A, \tild{A}, \iota) = \functm(A, B, \alpha).$$
    For a morphism $(\tau, \pi):(A, B, \alpha) \to (C,D, \beta)$ we have $\functm^2(\tau, \pi):(A, \tild{A}, \iota) \to (B, \tild{B}, \kappa)$ where 
    $$\functm^2(\tau, \pi) = \functm(\tau, \tild{\kappa \circ \tau}) = (\tau, \tild{\kappa \circ \tau}) = \functm(\tau, \pi).$$

    So, $\functm^2=\functm$. Next, consider $\functm \psi:\functm \to \functm$. We have 
    \begin{align*}
        \functm \psi_{(A, B, \alpha)} = \functm(\id_A, \tild{\alpha}) 
        = (\id_A, \tild{\iota \circ \id_A})
        = (\id_A, \id_{\tild{A}}). 
    \end{align*}

    The last equality is because $\tild{\iota \circ \id_A}=\tild{\iota}$ is the unique unital homomoprhism such that $\tild{\iota }\circ \iota = \iota$. Thus, $\tild{\iota \circ \id_A}=\id_{\tild{A}}$. 

    So, $\functm \psi: \functm \to \functm$ is the identity transformation and therefore a natural isomorphism. 

    Similarly, for $\psi \functm: \functm \to \functm$ we have
    \begin{align*}
        \psi \functm(A, B, \alpha) = \psi_{(A, \tild{A}, \iota)} = (\id_A, \tild{\iota})  = (\id_A, \id_{\tild{A}}).
    \end{align*}

    We conclude $\psi \functm: \functm \to \functm$ is also the identity transformation and therefore a natural isomorphism. 
\end{proof}

\begin{definition}
    We denote by $\M$ the full subcategory of $\U$ whose objects $(A, B, \alpha)$ satisfy the property that $\psi_{(A, B, \alpha)}:(A, \tild{A}, \iota) \to (A, B, \alpha)$ is an isomorphism. Let $\mpsi: \U \to \M$ be the functor obtained by regarding $\functm$ as a functor from $\U$ to $\M$ so that $\functm = \inc_{\M} \circ \mpsi$. 
\end{definition}

Notice, $(A, B, \alpha)$ is an object in $\M$ if and only if $\tild{\alpha}: \tild{A} \to B$ is an isomorphism. Thus, we say that $\M$ is the full subcategory of $\U$ containing all objects that are isomorphic to a minimal unitization triple. 

We immediately conclude the following from Lemma~\ref{comonad}. 

\begin{thm}\label{uminthm}
    Let $\functm: \U \to \U$, $\psi: \functm \to 1_\U$, $\M$, and $\mpsi: \U \to \M$ be as above. Then: 
    \begin{enumerate}
        \item[(i)] $(\functm, \psi)$ is an idempotent comonad on $\U$. 
        \item[(ii)] $\inc_\M$ is right adjointable, $\mpsi$ is a right adjoint of $\inc_\M$, and $\psi$ is the counit of the adjunction $\inc_\M \dashv \mpsi$. In particular, $\M$ is a coreflective subcategory of $\U$. 
        \item[(iii)] For all objects $(C,D, \beta)$ in $\U$, $(\functm(C,D,\beta),\psi_{(C,D,\beta)})$ is a universal morphism from $\M$ to $(C,D,\beta)$. 
    \end{enumerate}

    Moreover an object $(A, B, \alpha)$ is in $\M$ if and only if $(A,B,\alpha)$ is isomorphic to a minimal unitization triple. 
\end{thm}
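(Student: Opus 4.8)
The plan is to invoke the machinery already set up in Section~\ref{preliminaries}. By the three preceding propositions, we know that $\functm:\U\to\U$ is a functor, that $\psi:\functm\to 1_\U$ is a natural transformation, and that both $\functm\psi$ and $\psi\functm$ are natural isomorphisms; hence $(\functm,\psi)$ is an idempotent comonad on $\U$ by the definition of idempotent comonad, which gives item (i). Item (ii) then follows directly from Lemma~\ref{comonad} applied to $(\functm,\psi)$, $\M$, and $\mpsi$ as defined above, since $\M$ and $\mpsi$ are exactly the subcategory and functor that Lemma~\ref{comonad} attaches to this comonad: the lemma yields that $\inc_\M$ is right adjointable, that $\mpsi$ is a right adjoint, and that $\psi$ is the counit, and in particular that $\M$ is coreflective in $\U$.

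For item (iii), recall that the proof of Lemma~\ref{comonad} shows more than the bare adjunction: it exhibits, for each object $(C,D,\beta)$ in $\U$, that $(\functm(C,D,\beta),\psi_{(C,D,\beta)})$ is a universal morphism from $\M$ to $(C,D,\beta)$. Since our $\functm$ and $\psi$ are literally an instance of the abstract $\functm$ and $\psi$ there, this transfers verbatim; one can also spell out the universal property concretely, noting that for $(A,\tild A,\iota)$ in $\M$ and a morphism $(\tau,\pi):(A,\tild A,\iota)\to(C,D,\beta)$, the unique factoring morphism is $\bigl(\tau,\tild{\kappa\circ\tau}\circ\tild\alpha^{-1}\bigr)$ in the notation of the propositions above, in exact analogy with the remark following Theorem~\ref{commthm}.

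Finally, the ``moreover'' clause: by the definition of $\M$, an object $(A,B,\alpha)$ lies in $\M$ iff $\psi_{(A,B,\alpha)}=(\id_A,\tild\alpha)$ is an isomorphism in $\U$, which (since the first coordinate $\id_A$ is always an isomorphism) holds iff $\tild\alpha:\tild A\to B$ is an isomorphism of $C^*$-algebras. If $\tild\alpha$ is an isomorphism then $(B,\alpha)$ inherits the universal property of Definition~\ref{minunitization} by transport along $\tild\alpha$, so $(A,B,\alpha)$ is a minimal unitization triple; conversely, if $(A,B,\alpha)$ is isomorphic in $\U$ to the minimal unitization triple $(A,\tild A,\iota)$, uniqueness in Definition~\ref{minunitization} forces the comparison map $\tild\alpha$ to be an isomorphism, so $(A,B,\alpha)\in\M$. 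This already appears as the ``notice'' paragraph immediately preceding the theorem, so here it is just a matter of recording it.

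The only mild subtlety — and the closest thing to an obstacle — is item (iii): one must make sure that the object $\functm(C,D,\beta)=(C,\tild C,\kappa)$ genuinely lies in $\M$ (so that the universal morphism statement is about a morphism \emph{from} $\M$), which is immediate since $\psi_{(C,\tild C,\kappa)}=(\id_C,\tild\iota)=(\id_C,\id_{\tild C})$ is an isomorphism by the computation in the previous proposition. Everything else is a direct application of Lemma~\ref{comonad} and the definitions, exactly as in the parallel treatment of $\com$ in Section~\ref{compactifications}.
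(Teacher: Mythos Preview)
Your proposal is correct and follows essentially the same approach as the paper: the paper states that the theorem is ``immediately conclude[d] \ldots\ from Lemma~\ref{comonad}'' together with the three preceding propositions, and records the ``moreover'' clause in the ``notice'' paragraph just before the statement, exactly as you do. The only quibble is a small notational slip in your item~(iii), where you specialize the $\M$-object to $(A,\tild A,\iota)$ but then write $\tild\alpha^{-1}$; the paper's remark uses a general $(A,B,\alpha)\in\M$ so that $\tild\alpha^{-1}$ makes sense.
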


\begin{remark} The diagram for (3) in Theorem~\ref{uminthm} follows. If $(C, D, \beta)$ is an object in $\U$, $(A, B, \alpha)$ is an object in $\M$, and $(\tau, \pi):(A, B, \alpha) \to (C, D, \beta)$, then $(\tau, \tild{\kappa \circ \tau} \circ \tild{\alpha}^{-1}):(A, B, \alpha) \to (C, \tild{C}, \kappa)$ is the unique morphism completing the following diagram:
    \[\begin{tikzcd}
        & (C, D, \beta) \\
        (A, B, \alpha) \arrow{ru}{(\tau,\pi)} \arrow[dashed]{r} & (C, \tild{C}, \kappa) \arrow[swap]{u}{(\id_C, \tild{\beta})} \\
        (A, \tild{A}, \iota) \arrow{u}{(\id_A, \tild{\alpha})} \arrow[swap]{ru}{(\tau, \tild{\kappa \circ \tau})}.
    \end{tikzcd}\]
\end{remark}

\begin{prop}
    There is a functor $\functn: \U \to \U$ given on objects by setting $\functn(A,B, \alpha)$ to be a final object in $\U_A$. That is 
    $$\functn(A,B, \alpha) = (A, M(A), \rho).$$
    $\functn$ is given on morphisms $(\tau, \pi):(A, B, \alpha) \to (C, D, \beta)$ by 
    $$\functn(\tau, \pi) = (\tau, M\tau):(A, M(A), \rho) \to (C, M(C), \sigma).$$ 
\end{prop}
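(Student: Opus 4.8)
The plan is to mirror the construction of the functor $\functn$ on $\com$ from Proposition~\ref{minimizer}, with one-point compactification triples replaced by maximal unitization triples and the extension $f^{*}$ of Lemma~\ref{fstar} replaced by the extension $M\tau$ of Lemma~\ref{mtau}. Nothing is needed for well-definedness on objects beyond the observation recorded just before the statement, namely that $(A, M(A), \rho)$ is an object of $\U$ and is a final object in the fiber $\U_A$.

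First I would check that $\functn$ sends morphisms to morphisms. Given $(\tau, \pi) : (A, B, \alpha) \to (C, D, \beta)$ in $\U$, the homomorphism $\tau$ is nondegenerate, so Lemma~\ref{mtau} produces a unique unital homomorphism $M\tau : M(A) \to M(C)$ satisfying $M\tau \circ \rho = \sigma \circ \tau$, where $\rho : A \to M(A)$ and $\sigma : C \to M(C)$ are the canonical maps; this identity is exactly the commuting square needed for $(\tau, M\tau)$ to be a morphism $(A, M(A), \rho) \to (C, M(C), \sigma)$ in $\U$, as in the statement.

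Next, functoriality. Since $\functn$ is the identity on first coordinates, it suffices to show that $\tau \mapsto M\tau$ preserves identities and composition. For identities, $\id_{M(A)}$ is a unital homomorphism with $\id_{M(A)} \circ \rho = \rho = \rho \circ \id_A$, so the uniqueness clause of Lemma~\ref{mtau} forces $M\id_A = \id_{M(A)}$, whence $\functn(\id_A, \id_B) = (\id_A, \id_{M(A)})$. For composition, let $(\tau', \pi') : (C, D, \beta) \to (E, F, \gamma)$ be a second morphism and $\upsilon : E \to M(E)$ the canonical map. Since a composite of nondegenerate homomorphisms is nondegenerate (as in the proof of Lemma~\ref{lemucom}), $M(\tau' \circ \tau)$ is defined; meanwhile $M\tau' \circ M\tau$ is a unital homomorphism $M(A) \to M(E)$, and the computation $M\tau' \circ M\tau \circ \rho = M\tau' \circ \sigma \circ \tau = \upsilon \circ \tau' \circ \tau$, using the defining squares of $M\tau$ and $M\tau'$, shows that it enjoys the property characterizing $M(\tau' \circ \tau)$ in Lemma~\ref{mtau}. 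Uniqueness then gives $M(\tau' \circ \tau) = M\tau' \circ M\tau$, so $\functn(\tau', \pi') \circ \functn(\tau, \pi) = (\tau' \circ \tau,\, M\tau' \circ M\tau) = \functn\bigl((\tau', \pi') \circ (\tau, \pi)\bigr)$.

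I do not expect a genuine obstacle: everything reduces to the uniqueness half of Lemma~\ref{mtau}, which is why the parallel Proposition~\ref{minimizer} was settled with ``This is clear.'' The one point deserving emphasis is that nondegeneracy of $\tau$ is used in an essential way --- Lemma~\ref{mtau} is false without it --- and this is precisely why $\U$ was set up with nondegenerate homomorphisms in the first coordinates of its morphisms.
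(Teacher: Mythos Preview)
Your proposal is correct and takes essentially the same approach as the paper, which disposes of this proposition with ``This is clear.'' You have simply written out the details that the paper leaves implicit, relying (as intended) on the uniqueness clause of Lemma~\ref{mtau} for functoriality.
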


\begin{proof}
    This is clear. 
\end{proof}

\begin{prop}
    The assignment $\eta_{(A,B,\alpha)} = (\id_A, \ov{\alpha})$ gives a natural transformation $\eta: 1_\U \to \functm$. 
\end{prop}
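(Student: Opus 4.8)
The plan is to verify that $\eta_{(A,B,\alpha)} = (\id_A, \ov{\alpha})$ is a well-defined morphism in $\U$ for each object $(A,B,\alpha)$ and that this assignment is natural with respect to morphisms in $\U$. The map $\ov{\alpha}: B \to M(A)$ here is the unique injective homomorphism provided by Definition~\ref{multiplieralgebra}, applied to the unitization $(B,\alpha)$ of $A$; it satisfies $\ov{\alpha} \circ \alpha = \rho$, which is precisely the commutativity condition making $(\id_A, \ov{\alpha}):(A,B,\alpha) \to (A, M(A), \rho)$ a morphism in $\U$ (noting $\id_A$ is trivially nondegenerate and $\ov{\alpha}$ is unital since it maps into $M(A)$ and, being the canonical embedding associated to a unitization, preserves the unit).

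First I would record that $(\id_A, \ov{\alpha})$ is a legitimate morphism, citing the commuting triangle in Definition~\ref{multiplieralgebra}. Then, for an arbitrary morphism $(\tau, \pi):(A,B,\alpha) \to (C,D,\beta)$ in $\U$, I would set up the naturality square
\[\begin{tikzcd}[column sep=large]
    (A, B, \alpha) \arrow{r}{(\id_A, \ov{\alpha})} \arrow[swap]{d}{(\tau, \pi)} & (A, M(A), \rho) \arrow{d}{(\tau, M\tau)} \\
    (C, D, \beta) \arrow{r}{(\id_C, \ov{\beta})} & (C, M(C), \sigma)
\end{tikzcd}\]
and reduce commutativity to the two coordinate identities $\id_C \circ \tau = \tau \circ \id_A$ (immediate) and $M\tau \circ \ov{\alpha} = \ov{\beta} \circ \pi$ as homomorphisms $B \to M(C)$. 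For the second identity, both sides are homomorphisms out of $B$, and I would check they agree after precomposing with $\alpha: A \to B$ (whose image $\alpha(A)$ is an essential ideal of $B$): using $\ov{\alpha}\circ\alpha = \rho$, the defining square of $M\tau$ from Lemma~\ref{mtau}, the square for $(\tau,\pi)$, and $\ov{\beta}\circ\beta = \sigma$, one gets $M\tau \circ \ov{\alpha} \circ \alpha = M\tau \circ \rho = \sigma \circ \tau = \ov{\beta} \circ \beta \circ \tau = \ov{\beta} \circ \pi \circ \alpha$. Since $\alpha(A)$ is essential in $B$ and $M(C)$ is a maximal unitization (so homomorphisms agreeing on an essential ideal and suitably nondegenerate agree everywhere — the same density-type argument used in the proof that $\psi$ is natural), the two homomorphisms coincide on all of $B$.

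The main obstacle is the last step: passing from agreement on the essential ideal $\alpha(A) \subseteq B$ to agreement on all of $B$. This requires knowing that a homomorphism from a unital $C^*$-algebra $B$ into $M(C)$ is determined by its restriction to an essential ideal, which hinges on the nondegeneracy of the relevant restrictions and the universal property characterizing $M(C)$ as the maximal unitization. One must check that $M\tau \circ \ov{\alpha}$ and $\ov{\beta}\circ\pi$ restrict to nondegenerate homomorphisms on $\alpha(A)$ — this follows from nondegeneracy of $\tau$ and the fact that $\ov{\alpha}, \ov{\beta}, M\tau, \rho, \sigma$ are the canonical maps into multiplier algebras — after which the extension is unique by the universal property in Definition~\ref{multiplieralgebra}. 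This is entirely parallel to the argument already carried out for $\psi$, so once that correspondence is pointed out the proof is routine; I would present it compactly rather than belaboring the $C^*$-algebraic details.
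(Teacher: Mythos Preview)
Your proposal is correct and follows essentially the same route as the paper: set up the naturality square, note the first coordinate is trivial, verify $M\tau \circ \ov{\alpha}$ and $\ov{\beta}\circ\pi$ agree on the essential ideal $\alpha(A)$ via the chain $M\tau\circ\rho = \sigma\circ\tau = \ov{\beta}\circ\beta\circ\tau = \ov{\beta}\circ\pi\circ\alpha$, and then extend to all of $B$ by the nondegeneracy/essential-ideal argument. The only differences are cosmetic: you add an explicit check that $(\id_A,\ov{\alpha})$ is a morphism and flag the unitality of $\ov{\alpha}$, which the paper leaves implicit.
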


\begin{proof}
    For all morphisms $(\tau, \pi):(A, B, \alpha) \to (C, D, \beta)$ we must check that the diagram 
\[\begin{tikzcd}[column sep=large]
        (C, D, \beta) \arrow{r}{(\id_C, \ov{\beta})} & (C, M(C), \sigma) \\
        (A, B, \alpha) \arrow{u}{(\tau, \pi)} \arrow{r}{(\id_A, \ov{\alpha})} & (A, M(A), \rho) \arrow[swap]{u}{(\tau, M \tau)}
    \end{tikzcd}\]
    commutes.

    This is clear in the first coordinate. For the second, let $\alpha(a) \in \alpha(A)$. Then
    \begin{align*}
        M \tau \circ \ov{\alpha}(\alpha(a)) & = M \tau \circ \rho(a) &\text{by the commuting square for }(\id_A, \ov{\alpha})\\
        & = \sigma \circ \tau (a) &\text{by the commuting square for } (\tau, M \tau) \\
        & = \ov{\beta} \circ \beta \circ \tau (a) &\text{by the commuting square for }(\id_B, \ov{\beta}) \\
        & = \ov{\beta} \circ \pi(\alpha(a)) &\text{by the commuting square for }(\tau, \pi). 
    \end{align*}

    $M \tau \circ \ov{\alpha}$ and $\ov{\beta} \circ \pi$ are homomorphisms that are nondegenerate and agree on $\alpha(A)$, an essential ideal of $B$. Therefore they must agree on all of $B$ and so the diagram commutes. 
\end{proof}

\begin{prop}
    $\functn \eta: \functn \to \functn^2$ and $\eta \functn: \functn \to \functn^2$ are natural isomorphisms. 
\end{prop}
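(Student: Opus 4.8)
The plan is to follow exactly the pattern used earlier for $\functn$ on $\com$: first show that $\functn^2 = \functn$ strictly as functors, so that $\functn\eta$ and $\eta\functn$ are both natural transformations $\functn \to \functn$, and then compute each of them and identify it with the identity transformation. To see $\functn^2 = \functn$ on objects, note that $\functn$ selects the chosen final object of the fiber $\U_A$, so $\functn^2(A,B,\alpha) = \functn(A, M(A),\rho) = (A, M(A),\rho) = \functn(A,B,\alpha)$, the first coordinate being unchanged and the chosen final object of $\U_A$ being the same triple. On a morphism $(\tau,\pi)\colon (A,B,\alpha) \to (C,D,\beta)$ we have $\functn^2(\tau,\pi) = \functn(\tau, M\tau) = (\tau, M\tau) = \functn(\tau,\pi)$, since $\functn$ sends the morphism $(\tau, M\tau)$ between fiber-final objects to $(\tau, M\tau)$ again. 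Hence $\functn^2 = \functn$, and it suffices to show $\functn\eta$ and $\eta\functn$ are each the identity natural transformation on $\functn$.

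For $\functn\eta$: on an object $(A,B,\alpha)$,
\[
\functn\eta_{(A,B,\alpha)} = \functn(\id_A, \ov{\alpha}) = (\id_A, M\id_A).
\]
By Lemma~\ref{mtau}, $M\id_A\colon M(A) \to M(A)$ is the unique unital homomorphism with $M\id_A \circ \rho = \rho \circ \id_A = \rho$; as $\id_{M(A)}$ also has this property, uniqueness forces $M\id_A = \id_{M(A)}$. Thus $\functn\eta$ is the identity transformation, hence a natural isomorphism. For $\eta\functn$: on an object $(A,B,\alpha)$,
\[
\eta\functn_{(A,B,\alpha)} = \eta_{(A, M(A), \rho)} = (\id_A, \ov{\rho}),
\]
where $\ov{\rho}\colon M(A) \to M(A)$ is the morphism supplied by Definition~\ref{multiplieralgebra} applied to the unitization $(M(A),\rho)$ of $A$, i.e. the unique injective homomorphism with $\ov{\rho}\circ\rho = \rho$. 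Since $\id_{M(A)}$ satisfies this, uniqueness gives $\ov{\rho} = \id_{M(A)}$, so $\eta\functn$ is likewise the identity transformation and hence a natural isomorphism.

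Since the computation is entirely parallel to the $\com$ case, there is no real obstacle. The only points that require care are the verification that $\functn^2 = \functn$ holds on the nose (so that comparing $\functn$ with $\functn^2$ is literally comparing a functor with itself), and the two uniqueness invocations — from Lemma~\ref{mtau} and from Definition~\ref{multiplieralgebra} — that pin $M\id_A$ and $\ov{\rho}$ down to identity maps.
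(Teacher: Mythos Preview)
Your proof is correct and follows essentially the same approach as the paper: you first verify $\functn^2 = \functn$ strictly, then compute $\functn\eta$ and $\eta\functn$ componentwise and use the uniqueness clauses in Lemma~\ref{mtau} and Definition~\ref{multiplieralgebra} to identify $M\id_A$ and $\ov{\rho}$ with $\id_{M(A)}$, exactly as the paper does.
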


\begin{proof}
    First, we show $\functn^2=\functn$. For an object $(A, B, \alpha)$ in $\U$ we have 
    $$\functn^2(A, B, \alpha) = \functn(A, M(A), \rho) = (A, M(A), \rho) = \functn(A, B, \alpha).$$
    For a morphism $(\tau, \pi):(A, B, \alpha) \to (C,D, \beta)$ we have $\functn^2(\tau, \pi):(A, M(A), \rho) \to (C, M(C), \sigma)$ where
    $$\functn^2(\tau, \pi) = \functn(\tau, M \tau) = (\tau, M \tau) = \functn(\tau, \pi).$$

    So, $\functn^2 = \functn$. Next, we consider $\functn \eta:\functn \to \functn$. We have
    $$\functn \eta_{(A, B, \alpha)} =\functn(\id_A, \ov{\alpha}) = (\id_A, M (\id_A)) = (\id_A, \id_{M(A)}).$$

    The last equality is true because $M(\id_A)$ is the unique morphism such that $M(\id_A) \circ \rho = \rho$. Thus, $M(\id_A) = \id_{M(A)}$. 

    For $\eta \functn: \functn \to \functn$ we have 
    $$\eta \functn(A, B, \alpha) = \eta_{(A, M(A), \rho)} = (\id_A, \ov{\rho}) = (\id_A, \id_{M(A)}).$$

    The last equality is true because $\ov{\rho}$ is the unique morphism such that $\ov{\rho} \circ \rho = \rho$ so $\ov{\rho} = \id_{M(A)}$.
\end{proof}

\begin{definition}
    We denote by $\N$ the full subcategory of $\U$ whose objects $(A, B, \alpha)$ satisfy the property that $\eta_{(A, B, \alpha)}:(A, B, \alpha) \to (A, M(A), \rho)$ is an isomorphism. Let $\neta: \U \to \N$ be the functor obtained by regarding $\functn: \U \to \U$ as a functor from $\U$ to $\N$. So, $\functn = \inc_{\N} \circ \neta$. 
\end{definition}

Notice, $(A, B, \alpha)$ is an object in $ \N$ if and only if $\ov{\alpha}: B \to M(A)$ is an isomorphism. So, we say $\N$ is the full subcategory of objects in $\U$ isomorphic to a maximal unitization triple. 

We immediately conclude the following from Lemma~\ref{monad}. 

\begin{thm}\label{umaxthm}
    Let $\functn: \U \to \U$, $\eta:1_\U \to \functn$, $\N$, and $\neta:\U \to \N$ be as defined above. Then: 
    \begin{enumerate}
        \item[(i)] $(\functn, \eta)$ is an idempotent monad on $\U$. 
        \item[(ii)] $\inc_\N$ is left adjointable, $\neta$ is a left adjoint of $\inc_\N$, and $\eta$ is the unit of the adunction $\neta \dashv \inc_\N$. In particular, $\N$ is a reflective subcategory of $\U$. 
        \item[(iii)] For all objects $(A,B, \alpha)$ in $\U$, $(\functn(A,B,\alpha),\eta_{(A,B,\alpha)})$ is a universal morphism from $(A, B, \alpha)$ to $\N$. 
        \end{enumerate}
        Moreover, an object $(A,B, \alpha)$ is in $\N$ if and only if $(A, B,\alpha)$ is isomorphic to a maximal unitization triple. 
\end{thm}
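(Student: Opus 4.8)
The plan is to assemble the ingredients already established and feed them into Lemma~\ref{monad}. The three propositions immediately preceding the statement show precisely that $\functn:\U\to\U$ is a functor, that $\eta:1_\U\to\functn$ is a natural transformation, and that $\functn\eta:\functn\to\functn^2$ and $\eta\functn:\functn\to\functn^2$ are natural isomorphisms. By the Definition of an idempotent monad this is exactly assertion (i), so nothing further is needed there.

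For (ii) and (iii) I would apply Lemma~\ref{monad} verbatim with $\C=\U$, the functor $\functn$, the natural transformation $\eta$, the full subcategory $\N$ of objects on which $\eta$ is an isomorphism, and $\neta:\U\to\N$ the corestriction of $\functn$. Lemma~\ref{monad} yields directly that $\inc_\N$ is left adjointable, that $\neta$ is a left adjoint of $\inc_\N$, that $\eta$ is the unit of $\neta\dashv\inc_\N$, and hence that $\N$ is reflective; this is (ii). The proof of Lemma~\ref{monad} also records that $(\neta(A,B,\alpha),\eta_{(A,B,\alpha)})$ is a universal morphism from $(A,B,\alpha)$ to $\N$ for every object, which is (iii).

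For the final characterization, I would first observe that a morphism $(\tau,\pi)$ in $\U$ is an isomorphism if and only if both $\tau$ and $\pi$ are isomorphisms, since identities and composition in $\U$ are componentwise. Applying this to $\eta_{(A,B,\alpha)}=(\id_A,\ov{\alpha})$ shows that $(A,B,\alpha)\in\N$ if and only if $\ov{\alpha}:B\to M(A)$ is an isomorphism of $C^*$-algebras. Now $\functn(A,B,\alpha)=(A,M(A),\rho)$ is a maximal unitization triple, and it lies in $\N$ because $\eta_{\functn(A,B,\alpha)}=\eta\functn_{(A,B,\alpha)}$ is the identity, hence an isomorphism; conversely every maximal unitization triple $(A,M(A),\rho)$ has $\ov{\rho}=\id_{M(A)}$, so it lies in $\N$. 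Since $\N$ is a full subcategory defined by an isomorphism-invariant condition (any object isomorphic in $\U$ to an object of $\N$ is again in $\N$, as $\functn$ and $\eta$ are natural), $(A,B,\alpha)\in\N$ precisely when it is isomorphic to some maximal unitization triple: the ``if'' direction is isomorphism-closure, and the ``only if'' direction is witnessed by the isomorphism $\eta_{(A,B,\alpha)}:(A,B,\alpha)\to(A,M(A),\rho)$.

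I do not anticipate a genuine obstacle here, as the theorem is a packaging of Lemma~\ref{monad} with the preceding computations; the only point demanding a moment of care is the isomorphism-closure step in the ``moreover'' clause and the elementary remark that isomorphisms in $\U$ are exactly the componentwise isomorphisms.
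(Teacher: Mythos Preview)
Your proposal is correct and matches the paper's approach exactly: the paper simply writes ``We immediately conclude the following from Lemma~\ref{monad}'' after the three preceding propositions, and handles the ``moreover'' clause in the paragraph just before the theorem by noting that $(A,B,\alpha)\in\N$ iff $\ov{\alpha}$ is an isomorphism. Your write-up is somewhat more explicit (especially on isomorphism-closure), but the logical content is the same.
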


\begin{remark}
    The diagram for item (3) in Theorem~\ref{umaxthm} is as follows. For an object $(A, B, \alpha)$ in $\U$, $(C, D, \beta)$ an object in $\N$, and $(\tau, \pi):(A, B, \alpha) \to (C,D, \beta)$, $(\tau, \ov{\beta}^{-1} \circ M \tau)$ is the unique morphism completing the following diagram:
\end{remark}

\[\begin{tikzcd}
    & (C, M(C), \sigma) \\
    (A, M(A), \rho) \arrow{ru}{(\tau, M \tau)} \arrow[dashed]{r} & (C,D, \beta) \arrow[swap]{u}{(\id_C, \ov{\beta})}\\
    (A, B, \alpha) \arrow{u}{(\id_A, \ov{\alpha})} \arrow[swap]{ru}{(\tau, \pi)}.
\end{tikzcd}\]

\begin{prop}\label{minmax}
    The natural transformations $\functm \eta:\functm \to \functm \functn$ and $\functn \psi: \functn \to \functn \functm$ are natural isomorphisms. 
\end{prop}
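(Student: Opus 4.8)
The plan is to imitate the proof of Proposition~\ref{MetaNpsi} and reduce everything to the uniqueness clauses in Definition~\ref{minunitization} and Lemma~\ref{mtau}. The first step is to observe that $\functm$ and $\functn$ each depend only on the first coordinate of a triple, so that $\functm\functn = \functm$ and $\functn\functm = \functn$ as functors on $\U$: on objects, $\functm\functn(A,B,\alpha) = \functm(A,M(A),\rho) = (A,\tild{A},\iota) = \functm(A,B,\alpha)$ and $\functn\functm(A,B,\alpha) = \functn(A,\tild{A},\iota) = (A,M(A),\rho) = \functn(A,B,\alpha)$, and the corresponding identities on morphisms follow at once by unwinding the morphism formulas for $\functm$ and $\functn$. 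Hence $\functm\eta$ is an endomorphism of $\functm$ and $\functn\psi$ is an endomorphism of $\functn$, and it suffices to show each is the identity transformation.

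For $\functm\eta$ I would fix $(A,B,\alpha)$ and note that $\eta_{(A,B,\alpha)} = (\id_A, \ov{\alpha})\colon (A,B,\alpha) \to (A,M(A),\rho)$, whose target has first coordinate $A$ with minimal unitization embedding $\iota\colon A\to\tild{A}$. Applying the morphism formula for $\functm$ gives $\functm\eta_{(A,B,\alpha)} = \functm(\id_A,\ov{\alpha}) = (\id_A, \tild{\iota\circ\id_A}) = (\id_A, \tild{\iota})$, and Definition~\ref{minunitization} identifies $\tild{\iota}$ as the unique unital homomorphism $\tild{A}\to\tild{A}$ with $\tild{\iota}\circ\iota = \iota$, forcing $\tild{\iota} = \id_{\tild{A}}$; so $\functm\eta_{(A,B,\alpha)}$ is the identity morphism on $\functm(A,B,\alpha)$. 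The argument for $\functn\psi$ is dual: $\psi_{(A,B,\alpha)} = (\id_A, \tild{\alpha})\colon (A,\tild{A},\iota) \to (A,B,\alpha)$, applying $\functn$ yields $\functn\psi_{(A,B,\alpha)} = \functn(\id_A,\tild{\alpha}) = (\id_A, M\id_A)$, and Lemma~\ref{mtau} forces $M\id_A = \id_{M(A)}$ since $\id_{M(A)}$ is a unital homomorphism making the relevant square commute. Thus $\functn\psi$ is also the identity transformation, and in particular both $\functm\eta$ and $\functn\psi$ are natural isomorphisms.

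I do not expect any genuine obstacle here; the computation is entirely parallel to Proposition~\ref{MetaNpsi}. The only point requiring care is the bookkeeping: one must correctly identify which embedding plays the role of the target datum $\kappa$ (resp.\ $\rho$) in the morphism formula for $\functm$ (resp.\ $\functn$) --- here always the minimal (resp.\ maximal) unitization embedding of $A$, because the first coordinate is left unchanged by $\eta_{(A,B,\alpha)}$ and $\psi_{(A,B,\alpha)}$ --- after which the universal properties do the rest.
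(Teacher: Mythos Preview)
Your proof is correct and follows essentially the same approach as the paper: compute $\functm\eta_{(A,B,\alpha)}$ and $\functn\psi_{(A,B,\alpha)}$ directly, observe that the second coordinate only sees $\id_A$, and invoke the uniqueness clauses for $\tild{\iota}$ and $M\id_A$ to conclude both are identity morphisms. Your write-up is in fact slightly cleaner than the paper's, which swaps $\ov{\alpha}$ and $\tild{\alpha}$ in the displayed computations (harmlessly, since $\functm$ and $\functn$ ignore the second coordinate of a morphism anyway); your explicit preliminary remark that $\functm\functn=\functm$ and $\functn\functm=\functn$ makes the target of each transformation transparent.
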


\begin{proof}
    For an object $(A, B, \alpha)$ we have 
    \begin{align*}
        \functm \eta_{(A, B, \alpha)}  = \functm(\id_A, \tild{\alpha}) = (\id_A, \tild{\iota \circ \id_A}) = (\id_A, \id_{\tild{A}}). 
    \end{align*}

    So, $\functm \eta$ is a natural isomorphism. Similarly, 
    \begin{align*}
        \functn \psi_{(A, B, \alpha)}  = \functn(\id_A, \ov{\alpha})= (\id_A, M(\rho \circ \id_A))= (\id_A, \id_{M(A)}).
    \end{align*}

    So, $\functn \psi$ is also a natural isomorphism. 
\end{proof}

An immediate consequence of Proposition~\ref{minmax} is the following. 

\begin{thm}
    The categories $\N$ and $\M$ are equivalent. The adjunction $\neta \circ \inc_{\M} \dashv \mpsi \circ \inc_{\N}$ is an adjoint equivalence between $\N$ and $\M$. Moreover, we have 
    $$\neta \cong (\neta \circ \inc_{\M}) \circ \mpsi \text{ and } \mpsi \cong (\mpsi \circ \inc_{\N}) \circ \neta.$$
\end{thm} 

\section{Conclusion}

Notice that constructing the idempotent monad and idempotent comonad on $\U$ in Section~\ref{unitizations} does not rely on any of the results presented in Section~\ref{compactifications} or~\ref{ucom}. We could have just as well presented the maximal-normal equivalence in the category of unitizations of $C^*$-algebras first. This is our most general example. We could then specialize to the commutative setting and, using the contravariant equivalence, induce the pair of subcategories satisfying the maximal-normal relationship in $\com$. That is, we may present this story in reverse order. 

We opted to present the maximal-normal equivalence in the category of compactifications first. This example has the benefit of being more conceptually accessible. 

As mentioned in the introduction and in~\cite{BKQ25} this example is one of many examples of subcategories satisfying the maximal-normal equivalence in categories involving $C^*$-algebras. Moreover, our example in the topological setting indicates that these pairs are not unique to $C^*$-theory and are common in other areas as well. 

\section*{References}
\begin{biblist}
\bib{BKQ11}{article}{
    label={BKQ11},
   author={B\'edos, Erik},
   author={Kaliszewski, S.},
   author={Quigg, John},
   title={Reflective-coreflective equivalence},
   journal={Theory Appl. Categ.},
   volume={25},
   date={2011},
   pages={No. 6, 142--179},
   review={\MR{2805748}},
}

\bib{BKQ25}{misc}{
  author={B\'edos, Erik},
  author={Kaliszewski, S.},
  author={Quigg, John},
  title={On groupoid-graded {$C^*$}-algebras and equivalent subcategories linked via monads and comonads},
  date={2025},
  note={arXiv:2512.06461 [math.OA]},
}

\bib{G}{book}{
    author={Grandis, Marco},
    title={Category Theory and Applications},
    subtitle={A Textbook for Beginners},
    edition={2},
    publisher={World Scientific Publishing Co. Pte. Ltd.},
    address={Hackensack, NJ},
    date={2021},
    review={\MR{4274580}},
}

\bib{JT22}{article}{
   author={Jamneshan, Asgar},
   author={Tao, Terence},
   title={Foundational aspects of uncountable measure theory: Gelfand
   duality, Riesz representation, canonical models, and canonical
   disintegration},
   journal={Fund. Math.},
   volume={261},
   date={2023},
   number={1},
   pages={1--98},
   issn={0016-2736},
   review={\MR{4566000}},
   doi={10.4064/fm226-7-2022},
}

\bib{JM}{book}{
   author={Munkres, James R.},
   title={Topology},
   edition={2},
   publisher={Prentice Hall, Inc., Upper Saddle River, NJ},
   date={2000},
   pages={xvi+537},
   isbn={0-13-181629-2},
   review={\MR{3728284}},
}

\bib{GM}{book}{
    label={GM},
   author={Murphy, Gerard J.},
   title={$C^*$-algebras and operator theory},
   publisher={Academic Press, Inc., Boston, MA},
   date={1990},
   pages={x+286},
   isbn={0-12-511360-9},
   review={\MR{1074574}},
}
\bib{RW}{book}{
label={RW},
   author={Raeburn, Iain},
   author={Williams, Dana P.},
   title={Morita equivalence and continuous-trace $C^*$-algebras},
   series={Mathematical Surveys and Monographs},
   volume={60},
   publisher={American Mathematical Society, Providence, RI},
   date={1998},
   pages={xiv+327},
   isbn={0-8218-0860-5},
   review={\MR{1634408}},
   doi={10.1090/surv/060},
}
\end{biblist}

\end{document}